\newtheorem{Theorem}{Theorem}[section]
\newtheorem{TheoremA}{Theorem}
\newtheorem{example}{Example}
\newtheorem{Lemma}[Theorem]{Lemma}
\newtheorem{Proposition}[Theorem]{Proposition}
\newtheorem{Corollary}[Theorem]{Corollary}
\newtheorem{Remark}[Theorem]{Remark}
\newtheorem{Claim}[Theorem]{Claim}
\newtheorem{Warning}[Theorem]{Warning}
 \definecolor{refkey}{gray}{0.8}
 \definecolor{labelkey}{gray}{0.8}
\definecolor{darkgreen}{rgb}{0,0.4,0}
\definecolor{light}{gray}{.9}
\newcommand{\cC}{\ensuremath{\mathcal C}}
\newcommand{\cF}{\ensuremath{\mathcal F}}
\newcommand{\cP}{\ensuremath{\mathcal P}}
\newcommand{\E}{{\ensuremath{\mathbb E}} }
\newcommand{\bbE}{{\ensuremath{\mathbb E}} }
\newcommand{\bbL}{{\ensuremath{\mathbb L}} }
\newcommand{\N}{{\ensuremath{\mathbb N}} }
\newcommand{\bbN}{{\ensuremath{\mathbb N}} }
\renewcommand{\P}{{\ensuremath{\mathbb P}} }
\newcommand{\bbP}{{\ensuremath{\mathbb P}} }
\newcommand{\bbQ}{{\ensuremath{\mathbb Q}} }
\newcommand{\R}{{\ensuremath{\mathbb R}} }
\newcommand{\bbR}{{\ensuremath{\mathbb R}} }
\newcommand{\bbX}{{\ensuremath{\mathbb X}} }
\newcommand{\bbY}{{\ensuremath{\mathbb Y}} }
\newcommand{\Z}{{\ensuremath{\mathbb Z}} }
\newcommand{\bbZ}{{\ensuremath{\mathbb Z}} }
\let\a=\alpha \let\b=\beta   \let\d=\delta  \let\e=\varepsilon
 \let\g=\gamma       \let\l=\lambda
      \let\o=\omega      
  \let\s=\sigma \let\t=\tau   
 \let\x=\xi \let\z=\zeta
\let\D=\Delta      
\let\O=\Omega 
\newcommand{\vrho}{ v_{{}_{X^\rho}}  }
\begin{document}


\title[1d Mott variable range hopping  with  external field]{The velocity of  1d Mott variable range hopping  with  external field}

\author[A.\ Faggionato]{Alessandra Faggionato}
\address{Alessandra Faggionato.
  Dipartimento di Matematica, Universit\`a di Roma ``La Sapienza''.
 P.le Aldo Moro 2, 00185 Roma, Italy}
\email{faggiona@mat.uniroma1.it}

\author[N.\ Gantert]{Nina  Gantert}
\address{Nina Gantert. Fakult\"at f\"ur Mathematik,  Technische Universit\"at M\"unchen.  85748 Garching, Germany}
\email{gantert@ma.tum.de}

\author[M.\ Salvi]{Michele Salvi}
\address{Michele Salvi.  Centre De Recherche en Math\'ematiques de la D\'ecision 
Universit\'e Paris - Dauphine, 
Place du Mar\'echal De Lattre De Tassigny, 75775 Paris, France}
\email{salvi@ceremade.dauphine.fr}

%

\maketitle
\begin{abstract}
  Mott variable range hopping is a fundamental mechanism for 
low--temperature electron conduction in disordered solids  in the regime of Anderson localization. In a mean field approximation, it reduces to  a random walk (shortly, Mott random walk)  on a random marked point process with possible long-range jumps.

  We consider here  the one-dimensional Mott  random walk and we add an external field (or a bias to the right). We show that the bias makes the walk transient, and investigate its linear speed. Our main results are conditions for ballisticity (positive linear speed) and for sub-ballisticity (zero linear speed), and the  existence  in the ballistic regime  of an invariant distribution   for the environment viewed from the walker,  which is mutually absolutely continuous with respect to  the original law of the environment. If the point process is a renewal process, the aforementioned  conditions result in a sharp criterion for ballisticity. Interestingly, the speed is not always continuous as a function of the bias.

\medskip

\noindent {\em Keywords}: random walk in random environment, disordered media, ballisticity, environment viewed from the walker, electron transport in disordered solids.

\smallskip

\noindent{\em AMS Subject Classification}: 60K37, 82D30, 60G50, 60G55.
 \end{abstract} 

\section{Introduction}\label{intro}

Mott variable range hopping is a fundamental mechanism at the basis of 
low--temperature electron conduction in disordered solids (e.g.~doped semiconductors)  in the regime of Anderson localization (see \cite{AHL,MA,MD,POF,SE}). 
By localization, and using a mean--field approximation, Mott variable range hopping can be described by a suitable random walk $(\bbY_t)_{t \geq 0}$ in a random environment $\o$. The environment $\o$ is given by a  marked simple point process $\{(x_i, E_i)\}_{i\in\Z}$ with law $\bbP$. The sites $x_i\in \R^d$ correspond to  the points  in the disordered  solid around which the  conduction electrons  are  localized, and $E_i\in[-A,A]$ is  the ground energy of the associated localized wave function.   The random walk $\bbY_t$ has state space $\{x_i\}$ and can jump from a site  $x_i$ to any other site $x_k\not = x_i$ with probability  rate
\[
r_{x_i,x_k}(\o):= \exp \{ -|x_i-x_k|-\b  (|E_i|+|E_k|+ |E_i-E_k|)\}\,,
\]
$\beta$ being  the inverse temperature. 

We refer to \cite{CF,CFG,CFP,FM,FSS} for  rigorous results on the random walk $\bbY_t$, including the stretched exponential decay of the diffusion matrix as $\beta \to \infty$ in accordance with the physical Mott law for $d \geq 2$. Here  we  focus    on the one-dimensional case, i.e.~$\{x_i\}_{i\in \bbZ}\subset \bbR$ (we order the sites $x_i$'s in increasing order, with $x_0=0$), and study the effect of applying an external field. This  corresponds to modifying the above  jump rates $r_{x_i,x_k}(\o)$ by a factor $e^{ \l (x_k -x_i)}$, where $\l\in(0,1)$ has to be interpreted as the intensity of  the  external field. Moreover, we generalize the form of the jump rates, finally taking 
\[{
r^\l _{x_i,x_k}(\o):= \exp \{ -|x_i-x_k|+ \l (x_k-x_i)+ u(E_i, E_k)\}\,,}
\]
with $u$ a symmetric bounded  function. 
For simplicity, we keep the same notation $\bbY_t$ for the resulting random walk starting at the origin. 

 Under rather weak assumptions on the environment, we will show that $\bbY_t$ is a.s.~transient for  almost every environment  $\o$ (cf.~Theorem \ref{teo1}--(i)). In the rest of Theorem \ref{teo1} we  give two conditions in terms of the exponential moments of the inter--point distances, both assuring that  the asymptotic velocity $v_\bbY(\l) :=\lim _{t\to \infty} \frac{\bbY_t}{t}$ is well defined  and almost surely constant, that is, it does not depend on the realization of $\o$. Call $\bbE$ the expectation with respect to $\bbP$. The first condition, namely $\E\bigl[{\rm e}^{(1-\l)(x_1-x_0)}\bigr]<\infty\,$ and $u$ continuous, implies ballisticity, i.e.~$v_\bbY(\l)>0$. The second condition, namely $\E\bigl[{\rm e}^{(1-\l)(x_1-x_0)-(1+\l)(x_0-x_{-1})}\bigr]=\infty\,$,  implies sub-ballisticity, i.e.~$v_\bbY(\l)=0$.   In particular, if the points $\{x_i\}_{i\in\Z}$ are given by a renewal process, our two conditions give a sharp dichotomy (when $u$ is continuous). We point out that there are cases in which $v_\bbY(\l)$ is not continuous in $\lambda$ (see Example \ref{sorpresa} in Subsection \ref{esaedro}).

Under the condition leading to ballisticity we also  show that the Markov process given by the environment viewed from the walker  admits a stationary ergodic distribution $\bbQ^\infty$, which is mutually absolutely continuous to the original law $\bbP$ of the environment. Moreover we give  upper and lower bounds on the Radon--Nikodym derivative $\frac{d \bbQ^\infty}{d \bbP}$, showing that it is in $L^1(\bbP)$, and we characterize the asymptotic velocity as the  expectation of the local drift with respect to the measure $\bbQ^\infty$ (cf.~Theorem \ref{teo2}). 

The study of ballisticity for the Mott random walk is the first fundamental step towards proving the Einstein Relation, which states the proportionality of diffusivity and mobility of the process (see e.g.~\cite{GMP}). Among other important applications, the Einstein Relation would allow to conclude the proof of the physical Mott law, which was originally stated for the mobility of the process and has only been proved for its diffusivity (see \cite{CF}, \cite{FM} and \cite{FSS}). The Einstein Relation will be addressed in future work.
\smallskip

The techniques used to prove ballisticity and sub-ballisticity are different. In order to comment them it is convenient to refer to the discrete--time random walk\footnote{We use the convention $\N_+:= \{1,2,\dots \}$ and $\N:= \{0,1,2,\dots \}$}   $(X_n)_{n \in\N}$  on $\bbZ$ such that $X_n=i$ if after $n$ jumps the random walk $\bbY_t$ is at site $x_i$.  Due to our assumptions on the environment, the ballistic/sub-ballistic behavior of $(\bbY_t)_{t \geq 0}$ is indeed the same as that of  $(X_n)_{n \in\N}$, and therefore we focus on the latter.
\smallskip

 We first comment the ballistic regime. Considering first a generic  random walk on $\bbZ$  starting at the origin and a.s.~transient to the right,  ballisticity is usually derived by proving a law of large numbers (LLN)  for the hitting times $(T_n)_{n \geq 1}$, where $T_n $ is the first time the random walk reaches the half-line $[n,+\infty)$. 
In the case of nearest neighbor random walks,  $T_n$ is simply the hitting time of $n$, and considering an ergodic environment one can derive the LLN for $(T_n)_{n \geq 1}$ by showing that the sequence $(T_{n+1}- T_n)_{n \geq 1}$ is stationary and  mixing for the annealed law  as in \cite{A,So}. 
This technique cannot be applied in the present case, since 
our random walk has infinite range and much information about the environment to the right is known, when a site in $[n,+\infty)$ is visited for the first time.
A very useful tool is the method developed in \cite{CP} where the authors have studied ballisticity for a class of random walks on $\bbZ$ with arbitrarily long jumps. Their strategy is as follows. First one introduces for any positive integer $\rho$  a truncated random walk obtained from the original one by forbidding all jumps of length larger than $\rho$. The ergodicity of the environment and the finite range of the jumps allow to introduce a regenerative structure related to the times $T_{\rho n}$, and to analyze the asymptotic behavior of the $\rho$--truncated random walk.  In particular, one proves    that the environment viewed from the $\rho$--truncated random walk admits a stationary ergodic distribution $\bbQ^\rho$ which is mutually absolutely continuous to the original law of the environment. A basic ingredient here is the theory of cycle--stationarity and cycle--ergodicity (cf.~\cite[Chapter 8]{T} and \cite{CC} for an example in a  simplified setting).  
Finally, one proves that the sequence $(\bbQ^\rho)_{\rho\in\N_+}$
  converges weakly to a probability distribution $\bbQ^\infty$, which is indeed a stationary and ergodic distribution for the environment viewed from the random walker $(X_n)_{n\in\N}$ and is  also mutually absolutely continuous to the law of the environment $\bbP$. Since, as usual, the random walk can be written as an additive functional of the  environment viewed from the  random walker, one can apply Birkhoff's ergodic theorem and use the ergodicity   of $\bbQ^\infty$ to get the strong LLN for the random walk (hence its asymptotic velocity) for $\bbQ^\infty$--a.e.~environment. Using the fact that $\bbP \ll \bbQ^\infty$,  the above strong LLN holds  for $\bbP$--a.e.~environment, too. 
Finally, since the velocities of the $\rho$--truncated walks are uniformly bounded from below by a strictly positive constant and since they converge to the velocity of $(X_n)_{n\in\N}$ when $\rho\to\infty$, we obtain a ballistic behavior.

To analyze ballisticity we have used the same method as in \cite{CP}, although  one  cannot apply \cite[Theorems 2.3, 2.4]{CP} directly to  the present case, since some hypotheses are not satisfied  in our context. In particular, in \cite{CP} three  conditions (called E, C, D) are assumed, and only condition $C$ is satisfied  by our model.
 By means of estimates based on capacity theory, we are able to extend the method developed in \cite{CP} to the present case. 

\smallskip
We now move to sub-ballisticity (the regime of zero velocity is not covered in \cite{CP} and our method could be in principle applied to  random walks on $\bbZ$ with arbitrarily long jumps). 
 We define a coupling between the random walk  $(X_n)_{n \geq 0}$, a sequence of suitable $\N_+$--valued  i.i.d.~random variables $\xi_1, \xi_2,\dots$ with finite mean, and  an ergodic  sequence of random  variables $S_1,S_2, \dots$ with the following properties:
 Fix $\omega$ and call now $T_{k+1}$ the first time the random walk overjumps the
 point $\xi_1 + \cdots + \xi_k$. $S_k$ is a geometric random variable of parameter $s_k=s( \t_{ \xi_1+ \dots+ \xi_k} \o)$, where $\tau_\cdot$ is the usual shift and $s$ a deterministic function. The coupling guarantees that $X_{T_{k+1}}$  does not exceed $\xi_1 + \cdots + \xi_k+\xi_{k+1}$ and also ensures that the time  $T_{k+1}- T_k$
 is larger than $S_k$.
 Notice that 
  \begin{equation}\label{passatamutti}
  \frac{X_n}{n} \leq \frac{ X_{T_{k+1} } }{T_k}\leq \frac{  { \xi_1+ \dots + \xi_{k+1}} }{S_1+ S_2 + \dots+ S_k}  \qquad \text{ if } \qquad T_k \leq n < T_{k+1}\,,
  \end{equation}
and therefore  the sub-ballisticity of  $(X_n)_{n\geq 0}$ follows from 
 the LLN  for $(\xi_k)_{k\in\N_+}$  and the LLN for  $(S_k)_{k\in\N_+}$, since our assumption $\E\bigl[{\rm e}^{(1-\l)(x_1-x_0)-(1+\l)(x_0-x_{-1})}\bigr]=\infty\,$ implies that $\bbE[ 1/s(\o) ] =+\infty$.
 

\subsection{Outline}
In {Section \ref{notazione}} we rigorously introduce the (perturbed) Mott random walk in its continuous and discrete-time versions. Theorem \ref{teo1} states the transience to the right and gives  conditions implying  ballisticity or subballisticity. 
Theorem \ref{teo2}, deals with the Radon-Nikodym derivative of the invariant measure for the  environment  viewed from the walker
with respect to the original law $\bbP$ of the environment and gives a characterization of the limiting speed of the walk. {Subsection \ref{domenica}} comments the assumptions we made for the Theorems, while two important (counter-)examples can be found in {Subsection \ref{esaedro}}.

In {Section \ref{zetino}} we collect results on the $\rho$--truncated walks. 
Estimates of the effective conductances induced by these walks and of the time they spend on a given interval are carried out in {Subsections \ref{condottiero}} and {\ref{ECD}}, respectively. In {Subsection \ref{subsec:hit}} we show that the probability for them to hit a specific site to the right is uniformly bounded from below.

{Section \ref{rigenero}} introduces the regenerative structure for the $\rho$--truncated random walks and in {Subsection \ref{subsec:regeneration}} we give estimates on the regeneration times. The existence and positivity of the limiting speed for the truncated walks is proven in {Subsection \ref{veloce}}.

In {Section \ref{tronco}} we characterize the density of the invariant measure for the process viewed from the $\rho$--truncated walker with respect to the original law of the environment.
In {Subsection \ref{up_bound}} we bound the Radon-Nikodym derivative from above by an $L^1$ function, while in {Subsection \ref{lo_bound}} we give a uniform lower bound. In {Subsection \ref{deboluccio}} we finally pass to the limit $\rho\to\infty$ and obtain an invariant measure for the  environment viewed from  the non-truncated walker and show that it is also absolutely continuous   with respect to $\bbP$ (see Lemma \ref{chiave}).

To conclude, in {Sections \ref{proof_transience}, \ref{ballistic_part}} and {\ref{submarine}} we prove, respectively, parts (i), (ii) and (iii) of Theorem \ref{teo1}. Some technical results are collected in the Appendixes \ref{ape}, \ref{appendix:ergodic} and \ref{app_nn}.

\section{Mott random walk and main results  }\label{notazione}
One-dimensional  Mott random walk  is a particular random walk in a random environment. 
 The environment $\o$ is given by 
   a  double--sided sequence  $(Z_k, E_k) _{k \in \bbZ}$  of random variables,  with $Z_k \in (0,+\infty)$ and $E_k\in  \bbR$ for all $k \in \bbZ$. 
   We denote by $\O$ the  space of all environments,  by $\bbP$ and $\bbE$ the law of the environment  and the associated expectation, respectively. Given $\ell \in \bbZ$, we define the  shifted environment $\t_\ell \o$ as $\t_\ell \o:=(Z_{k+\ell}, E_{k+\ell}) _{k \in \bbZ}$. 
 Our main assumptions on the environment are the following:
\begin{itemize}
\item[(A1)] The sequence $( Z_k,E_k)_{k \in \bbZ}$ is stationary and ergodic with respect to shifts;
\item[(A2)]   $\bbE[Z_0] $ is finite;
\item[(A3)]  $\bbP ( \o = \t_\ell \o)=0$  for  all $\ell \in \bbZ$;   
\item[(A4)]   There exists $  d>0$ satisfying
 $\bbP(Z_0 \geq d)=1$.
\end{itemize}
We postpone to Subsection \ref{domenica} some comments  on the above assumptions. 

\smallskip 
   It is convenient to introduce the sequence $  ( x_k ) _{k \in \bbZ}$  of points on the real line, where $x_0=0$ and $x_{k+1}= x_k +Z_k$. Then the environment $\o$ can be thought also as the marked simple  point process  $  (x_k,E_k)_{k \in \bbZ}$, which will be denoted again by $\o$ (with some abuse of notation).   In this case, the $\ell$--shift reads $\t_\ell \o =  (x_{k+\ell }-x_\ell,E_{k+\ell} )_{k \in \bbZ}$. For physical reasons, $E_k$ is called the energy mark associated to point $x_k$, while $Z_k$ is the interpoint distance (between $x_{k-1}$ and 
 $x_k$).

Fix now  a symmetric and bounded (from below by $u_{\rm min}$ and from above by $u_{\rm max}$) measurable function $u: \bbR \times \bbR \to \bbR$. Given an environment $\o$, the Mott random walk $(\bbY_t)_{t \geq 0}$ is the continuous--time
random walk   on $\{ x_k\}_{k \in \bbZ} $ with  probability rate
\begin{equation}\label{eq_tassi}
{r_{x_i,x_k}(\o):= \exp \{ -|x_i-x_k|+u (E_i,E_k)\} }
\end{equation}
for a jump from $x_i$ to $x_k \not =x_i$.  For convenience,  we set
  $r_{x,x}(\o)\equiv 0$.   Note that  the Mott walk is well defined for $\bbP$--a.a.~$\o$. Indeed, since the interpoint distance is a.s.~at least $d$ and the 
function $u$ is uniformly bounded,  the holding time parameter $r_x(\o) := \sum_y r_{x,y} (\o)$ can be bounded from above by a constant $C>0$  uniformly in   $x \in \{ x_k\}_{k \in \bbZ}  $, hence explosion does not take place.

\smallskip

We now introduce a  bias $\l$ which corresponds to the intensity of the external field. For a fixed $\l \in [0,1)$, the biased Mott random walk  $(\bbY_t)_{t \geq 0}$ with environment $\o$ is the continuous--time random walk  on $\{ x_k\}_{k \in \bbZ} $  with probability rates
\begin{equation}\label{eq_eq_eq}
r^\l _{x,y}(\o)= e^{  \l   (y-x) } r_{x,y}(\o)
\end{equation}
for a jump from  $x$ to $y \not =x$.   For convenience,  we set
  $r^\l_{x,x}(\o)\equiv 0$ and denote the holding time parameter by 
 $ r^\l_x(\o):= \sum _{y } r^\l _{x,y}(\o)$. When $\l=0$, one recovers the original Mott random walk. Since, for $\l \in (0, 1)$, we have a.s.~
{$r^\l_x(\o) \leq \sum _{ k \in \bbZ} e^{ -(1-\lambda)|k| d  + u_{\rm max}}< \infty$, }the biased Mott random walk with environment $\o$ is well defined for $\bbP$--a.a.~$\o$. 
 
\smallskip
We can consider also the jump chain $(Y_n)_{ n \geq 0}$  associated to  the biased  Mott random walk (we call it the \emph{discrete time Mott random walk}). Given the environment $\o$,  $(Y_n)_{ n \geq 0
}$  is the discrete--time random walk on $\{ x_k\}_{k \in \bbZ} $  with jump probabilities
\begin{equation}\label{treno}
p^\l _{x,y}(\o):= \frac{r^\l _{x,y}(\o)}{r _x^\l (\o)}\,, \qquad x\not = y\,.\end{equation}
A similar definition holds for the unbiased case ($\l=0$).

\smallskip

The following result concerns transience, sub-ballisticity and ballisticity:
\begin{TheoremA}\label{teo1}
Fix $\l \in (0,1)$.
Then, for $\bbP$--a.a.~$\o$, the continuous time Mott random walk $(\bbY_t)_{t \geq 0}$ with environment $\o$, bias $\l$ and starting at the origin  satisfies the following properties:
\begin{itemize}
\item[(i)] Transience to the right: $\lim _{t \to \infty} \bbY_t=+\infty$ a.s. 

\item[(ii)] Ballistic regime: If  $\bbE\bigl[ e^{ (1-\l) Z_0}  \bigr] <+\infty $  and $u :\bbR \times \bbR  \to \bbR$ is  continuous, then 
the asymptotic velocity
\[
v_{\bbY}(\l):=\lim _{t \to \infty} \frac{\bbY_t}{t}
\]
 exists  a.s.,  it is deterministic, finite and strictly positive (an integral representation of $v_{\bbY}$ is given in Section \ref{ballistic_part}, see \eqref{int_rapr} and  \eqref{fatto}).

\item[(iii)] Sub--ballistic regime: If 
\begin{equation}\label{immacolata}
\E\bigl[{\rm e}^{(1-\l)Z_0-(1+\l)Z_{-1}}\bigr]=\infty\,,
\end{equation}
then 
\begin{equation}
v_{\bbY}(\l):=\lim _{t \to \infty} \frac{\bbY_t}{t}=0\,.
\end{equation}
In particular, if  $\bbE[Z_{-1}|Z_0]\leq C$  for some constant $C$ which does not depend on $\o$  and $\E[{\rm e}^{(1-\l)Z_0}]=\infty$,  then condition \eqref{immacolata} is satisfied and $v_{\bbY}(\l)=0$.

\end{itemize}

In addition,  for $\bbP$--a.a.~$\o$  the above properties remain valid (restricting to integer times $n \geq 0$) for the discrete time Mott random walk  $(Y_n)_{ n \geq 0}$ with environment $\o$, bias $\l$ and starting at the origin, and its velocity 
$v_Y(\l):= \lim  _{n \to \infty} \frac{Y_n}{n}$.
\end{TheoremA}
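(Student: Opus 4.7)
My plan for part (i) is to exploit the reversibility of the biased walk. Since $u$ is symmetric, the discrete jump chain $(Y_n):=(\bbY_{t_n})$ associated to $\bbY_t$ is reversible with respect to $\pi(x_k):=e^{2\lambda x_k}$, with symmetric conductances $c(x_k,x_j)=e^{\lambda(x_k+x_j)-|x_k-x_j|+u(E_k,E_j)}$. Transience of $(Y_n)$ is then equivalent to $R_{\rm eff}(x_0,+\infty)<+\infty$, and the energy of the nearest-neighbor unit flow to the right yields
\[
R_{\rm eff}(x_0,+\infty)\le e^{-u_{\min}}\sum_{k\ge 0} e^{-2\lambda x_k+(1-\lambda)Z_{k+1}}.
\]
Under only (A1)--(A4) this sum is almost surely finite: Birkhoff gives $x_k/k\to\bbE[Z_0]>0$, and the standard consequence $Z_k/k\to 0$ almost surely for an $L^1$ ergodic sequence makes the exponent eventually bounded by $-\delta k$ for some $\delta>0$. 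Hence $G(x_0,x_0)<\infty$. To rule out $\liminf_n Y_n=-\infty$, I combine the reversibility identity $G(x_0,x_k)=(\pi(x_k)/\pi(x_0))\,G(x_k,x_0)$ with the general bound $G(x_k,x_0)\le G(x_0,x_0)$:
\[
\bbE_{x_0}\bigl[\#\{n\ge 0:Y_n\le x_{-N}\}\bigr]=\sum_{k\le -N}G(x_0,x_k)\le G(x_0,x_0)\sum_{k\le -N}e^{2\lambda x_k}\le \frac{G(x_0,x_0)\,e^{-2\lambda Nd}}{1-e^{-2\lambda d}},
\]
where (A4) gives $x_k\le -|k|d$ for $k<0$. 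Letting $N\to\infty$ forces $\liminf_n Y_n>-\infty$ a.s.; combined with transience this yields $Y_n\to +\infty$, hence $\bbY_t\to +\infty$.

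For part (ii), my plan is to follow the truncation route sketched in the introduction. For each $\rho\in\bbN_+$, let $X^\rho_n$ be the truncated discrete walk obtained by forbidding jumps of index-length larger than $\rho$. Because $X^\rho$ has bounded range, the first-passage times to the levels $\rho,2\rho,\dots$ support a natural regenerative skeleton, and a cycle-stationarity construction produces an invariant measure $\bbQ^\rho$ for the environment viewed from $X^\rho_n$. The core technical content is then to establish, in this order: (a) $\bbQ^\rho$ and $\bbP$ are mutually absolutely continuous with $d\bbQ^\rho/d\bbP\in L^1(\bbP)$; (b) $\bbQ^\rho$ is ergodic, so Birkhoff applied to the local drift yields a deterministic velocity $v^\rho>0$ for $X^\rho$; (c) the sequence $(v^\rho)_\rho$ is uniformly bounded below by a strictly positive constant. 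Item (c) is exactly where a direct application of the Comets--Popov framework \cite{CP} fails and where the capacity and effective-conductance estimates promised in the introduction enter: the exponential moment $\bbE[e^{(1-\lambda)Z_0}]<+\infty$ and the continuity of $u$ are used to obtain uniform-in-$\rho$ lower bounds on the probability that $X^\rho$ hits prescribed sites far to the right, and to control uniformly the expected time $X^\rho$ spends in any finite interval. Once (a)--(c) are in place, the weak limit $\bbQ^\infty:=\lim_\rho \bbQ^\rho$ is invariant, stationary and ergodic for the untruncated environment-viewed-from-the-walker process and still mutually absolutely continuous to $\bbP$; Birkhoff applied to the local drift gives the LLN for $X_n$, one identifies $v_X(\lambda)=\lim_\rho v^\rho$, and (c) provides $v_X(\lambda)>0$. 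Passage from $X_n$ to $\bbY_t$ is then made through the LLN for the holding times, which under the hypotheses of (ii) are ergodic and bounded away from $0$ and $\infty$ in expectation.

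For part (iii), I realize the coupling sketched in the introduction. On an enlarged probability space I construct $(X_n)$, an i.i.d.\ $\bbN_+$-valued sequence $(\xi_k)$ with $\bbE[\xi_1]<\infty$, and a sequence $(S_k)$ such that, conditionally on $\omega$, $S_k$ is geometric of parameter $s_k:=s(\tau_{\xi_1+\cdots+\xi_k}\omega)$ for an explicit measurable function $s:\Omega\to(0,1]$. Writing $T_{k+1}:=\inf\{n:X_n>\xi_1+\cdots+\xi_k\}$, the coupling guarantees $X_{T_{k+1}}\le \xi_1+\cdots+\xi_{k+1}$ and $T_{k+1}-T_k\ge S_k$. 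The function $s$ is engineered so that $1/s(\omega)$ dominates a positive constant times $e^{(1-\lambda)Z_0-(1+\lambda)Z_{-1}}$, which under \eqref{immacolata} forces $\bbE[1/s]=+\infty$. Birkhoff applied to the ergodic sequence $(1/s(\tau_\ell\omega))_{\ell\in\bbZ}$ then gives $(S_1+\cdots+S_k)/k\to+\infty$ a.s., while the classical LLN for the i.i.d.\ $(\xi_k)$ gives $(\xi_1+\cdots+\xi_k)/k\to\bbE[\xi_1]<\infty$. Substituting into the chain \eqref{passatamutti} produces $X_n/n\to 0$ almost surely, i.e.\ $v_X(\lambda)=0$; passage to continuous time (holding rates are uniformly bounded under our assumptions) then yields $v_\bbY(\lambda)=0$.

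The hardest step is plainly item (c) in the plan for (ii): the uniform-in-$\rho$ lower bound on the truncated velocities. It forces one to develop, directly for the Mott model, the capacity and effective-conductance estimates that substitute for the hypotheses of \cite{CP}, which do not hold in our long-range-jump setting. In particular one must simultaneously bound from above the expected occupation time of $X^\rho$ in finite intervals, bound from below its probabilities of reaching remote sites, and control the regeneration times, all uniformly in $\rho$; without this uniformity the limit $\rho\to\infty$ would collapse $v^\rho$ to zero and the entire ballistic argument would fail.
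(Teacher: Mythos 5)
Your part~(i) is correct and amounts to a mild variant of the paper's argument: the paper bounds $E^{\o,\infty}_0[N_\infty(k)]$ for $k\le 0$ via escape probabilities and effective conductances (Proposition \ref{prop:function_g}), while you obtain the same control from the reversibility identity $\pi(i)G(i,j)=\pi(j)G(j,i)$ together with $G(k,0)\le G(0,0)$ and a nearest--neighbour unit--flow bound for $G(0,0)$; both routes rest on the same inputs (the conductances \eqref{defofcond}, (A4), and $Z_k/k\to 0$ from the ergodic theorem). One small imprecision: the jump chain is reversible with respect to $\pi(k)=e^{2\l x_k}r^\l_{x_k}(\o)$, not $e^{2\l x_k}$; since $r^\l_x(\o)$ is uniformly bounded above this only costs an $\o$--dependent finite factor, so your conclusion stands.

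For parts~(ii) and~(iii), however, what you give is the paper's strategy restated, with the decisive steps asserted rather than proved. In (ii) you correctly identify, but do not establish, exactly the content of Sections \ref{zetino}--\ref{tronco}: the uniform--in--$\rho$ comparison of effective conductances and of $\pi^\rho$ with the nearest--neighbour quantities, the bound $E^{\o,\rho}_0[N_\infty(k)]\le g_\o(|k|)$ with $\bbE[g_\o(k)]$ summable under $\bbE[e^{(1-\l)Z_0}]<\infty$, the uniform lower bound $r^\rho_k(0)\ge 2\e$ (Lemma \ref{lemma:epsilon}), the regeneration--time moment bounds (Lemma \ref{gnocchi}), and the two--sided control $\g\le d\bbQ^\rho/d\bbP\le F\in L^1(\bbP)$ (Proposition \ref{aereo}, Corollary \ref{cor:rn_lower_bound}, Lemma \ref{chiave}) needed both to pass to $\rho\to\infty$ and to transfer the LLN from $\bbQ^\infty$ to $\bbP$; none of this is routine, and asserting it is not proving it. In (iii), besides leaving the coupling construction (quantile coupling, domination of the overshoot by an i.i.d.\ $\xi$ independent of $\o$, the overshoot--tail estimate) unproved, there is a concrete misstep: you invoke Birkhoff for the sequence $\bigl(1/s(\t_\ell\o)\bigr)_{\ell}$, whereas what the argument needs is a law of large numbers for $s_k=s(\t_{\xi_1+\cdots+\xi_k}\o)$, i.e.\ for the environment sampled along the random sites $\xi_1+\cdots+\xi_k$. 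Stationarity and ergodicity of that sampled sequence is not automatic — it is exactly what the paper proves in Appendix \ref{appendix:ergodic} (Lemmas \ref{pasquetta} and \ref{luce}), using that the $\xi$'s are i.i.d., independent of $\o$, and that the GCD of their support is $1$ — and a further argument is needed so that the conditionally geometric $S_k$ inherit the ergodic LLN. Without these, the step $(S_0+\cdots+S_{k-1})/k\to\infty$, and hence the conclusion $v_{X^\infty}(\l)=0$ via \eqref{passatamutti} under \eqref{immacolata}, is unjustified.
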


\begin{Remark}  

In the case $\l=0$ the Mott random walks $\bbY_t$ and $Y_n$ are recurrent and have a.s.~zero velocity. Recurrence follows from  \cite[Thm. 1.2--(iii)]{CFG} and the recurrence of   the spatially homogeneous discrete time random  walk on $\bbZ$ with probability to jump from $x$ to $y$ proportional to $e^{-|x-y|}$.  To see that the velocity is zero, set $\bbQ(d\o)= \frac{r_0(\o) }{ \bbE [r_0(\o) ]} \bbP( d \o)$. $\bbQ$ is  a reversible and  ergodic distribution for   the environment viewed from the discrete time  Mott random walker  $Y_n$ {(see \cite{CF}). By writing  $Y_n$ as an additive function  of the process ``environment viewed from the walker" and using the ergodicity of $\bbQ$, one gets that  $v_Y(\l=0)$ is zero a.s.,  for $\bbQ$--a.a.~$\o$ and therefore for $\bbP$--a.a.~$\o$. }
  Similarly, $v_\bbY(\l=0)=0$ a.s., for $\bbP$--a.a.~$\o$ (use that $\bbP$ is    reversible and  ergodic  for   the environment viewed from $\bbY_t$, see  \cite{FSS}).
%
  \end{Remark}


\begin{Remark}
If the random variables $Z_k$  are i.i.d.~(or even   when only   $Z_k,Z_{k+1}$ are independent  for every $k$)  and $u$ is continuous, the above theorem implies the following dichotomy:  $v_\bbY (\l)>0$ if and only if  $\bbE\bigl[ e^{ (1-\l) Z_0}  \bigr] <+\infty $, otherwise  $v_\bbY (\l)=0$. The same holds for $v_Y(\l)$.
  We point out that, if the $Z_k$'s are correlated,   $\bbE\bigl[ e^{ (1-\l) Z_0}  \bigr] =+\infty $ does not imply in general zero velocity (see Example \ref{velocino} in Section \ref{esaedro}).  
\end{Remark}

\begin{Remark}
 Theorem \ref{teo1} shows that there are cases in which the limiting speed $v_{\bbY}(\l)$ is not continuous in $\lambda$. See  Example \ref{sorpresa} in Section \ref{esaedro}. \end{Remark}

\begin{Remark}\label{remark:nearest}
When  considering  the  nearest neighbor random walk on $\{x_k\}_{k \in \bbZ}$ with probability rate for a jump from $x$ to a  neighboring site $y$ given by \eqref{eq_eq_eq},   the random walk is ballistic if and only if 
\begin{equation}\label{musica}
\sum _{i=1}^\infty  \exp \bigl\{
(1-\l) Z_0 - (1+\l ) Z_{-i} - 2 \l ( Z_{-i+1}+ \dots+ Z_{-1} )
\bigr\} < \infty\,. 
\end{equation}
A proof of this fact is given in Appendix   \ref{app_nn}.
As outlined in Remark \ref{estensione}, one can indeed  weaken the condition $\bbE\bigl[ e^{ (1-\l) Z_0}  \bigr] <+\infty $ to prove ballisticity, albeit at the cost of dealing with rather ugly formulas having some analogy with \eqref{musica}.
\end{Remark}

One of the most interesting technical results we use in the proof of Theorem \ref{teo1}, Part (ii), concerns the invariant measure for the environment seen from the point of view of the walker:

\begin{TheoremA}\label{teo2} Suppose that    $\bbE\bigl[e^{ (1-\l) Z_0}  \bigr] <+\infty $  and $u :\bbR \times \bbR  \to \bbR$ is  continuous.  Then the following holds:
\begin{itemize}
\item[(i)]
The environment viewed from the discrete time Mott random  walker $(Y_n)_{n \geq 0}$, i.e. the process $\bigl(\t_{ \phi (Y_n) } \o\bigr)_{n \geq 0}$ where $\phi(x_i)=i$, admits an invariant and ergodic  distribution $\bbQ^\infty$ absolutely continuous to $\bbP$ such that  
\begin{equation}
0<\gamma\leq \frac{d \bbQ^\infty}{ d \bbP} \leq F \,, \qquad \bbP\text{--a.s.}  
\end{equation}
for a suitable universal constant $\g$ and a function $F \in L^1( \bbP)$ (defined in \eqref{def_F}). 

\item[(ii)]
 By writing $\bbE ^\infty $ for the expectation with respect to $\bbQ^\infty$, the velocities $v_{\bbY}(\l)$, $v_Y(\l)$ can be expressed as 
 \begin{align} 
& v_\bbY(\l)=  v_Y(\l)  \bbE^\infty\Big[  1/ r_0 ^\l(\o) \Big]\,,\label{adsl1}  \\
&  v_Y(\l)= \bbE[Z_0] \bbE^\infty  \Big[ \sum _{k \in \bbZ} k \,p_{0,x_k}^\l (\o) \Big]\,,\label{adsl2}
 \end{align}
and  the expectations in \eqref{adsl1}, \eqref{adsl2} are finite and positive (recall that $ r_0 ^\l(\o) = \sum_{k } r_{0,k} ^\l (\o)$).
\end{itemize}
 \end{TheoremA}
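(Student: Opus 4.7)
The plan is to follow the Comets--Popov strategy outlined in Section~\ref{intro}. For each $\rho \in \bbN_+$, I would first work with the $\rho$--truncated jump chain $(Y^{(\rho)}_n)_{n \geq 0}$, obtained from $(Y_n)$ by forbidding any jump between sites whose indices differ by more than $\rho$. Since this walk has bounded (index) range, cycle--stationarity theory combined with the regeneration structure built in Section~\ref{rigenero} yields an invariant ergodic distribution $\bbQ^\rho$ for the environment viewed from $Y^{(\rho)}$, mutually absolutely continuous with respect to $\bbP$; this construction is carried out in Section~\ref{tronco}.

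The heart of the argument is to establish \emph{uniform in $\rho$} bounds on $d\bbQ^\rho/d\bbP$. Following Subsection~\ref{up_bound}, I would bound the Radon--Nikodym derivative from above by a fixed $L^1(\bbP)$ function $F$ by comparing the mean time the truncated walker spends in one regeneration block against the moment assumption $\bbE[e^{(1-\lambda)Z_0}] < \infty$. Because the Comets--Popov conditions \textbf{E} and \textbf{D} fail in our setting, the naive comparisons must be replaced by capacity and effective--conductance estimates (Subsection~\ref{condottiero}), which also underlie the hitting--probability bound of Subsection~\ref{subsec:hit}. That bound, together with the $L^\infty$ control of $r^\lambda_0(\omega)$, then yields a uniform lower bound $d\bbQ^\rho/d\bbP \geq \gamma > 0$ in Subsection~\ref{lo_bound}.

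With the uniform bounds in hand, Dunford--Pettis compactness furnishes a subsequential weak limit $\bbQ^\infty$, and both inequalities pass to the limit. To verify $\bbQ^\infty$--invariance for the full walker, I would show that the truncated transition operators converge, on a sufficiently rich class of bounded continuous functions, to the full transition operator, so that $\bbQ^\rho P^{(\rho)} = \bbQ^\rho$ passes to $\bbQ^\infty P = \bbQ^\infty$; this is done in Subsection~\ref{deboluccio} and summarised in Lemma~\ref{chiave}. Ergodicity of $\bbQ^\infty$ for the environment chain follows from the positivity $d\bbQ^\infty/d\bbP \geq \gamma$ and the ergodicity of $(\bbP,\tau)$, since any chain--invariant event pulls back to a shift--invariant event in $\bbP$ of the same $\bbP$--mass, hence is trivial.

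For the velocity identities, writing $\phi(Y_n) = \sum_{j=0}^{n-1}(\phi(Y_{j+1}) - \phi(Y_j))$ realises the index process as a Birkhoff sum along the $\bbQ^\infty$--invariant chain $(\tau_{\phi(Y_j)}\omega)_{j \geq 0}$, with conditional increment mean $g(\omega) := \sum_k k\, p^\lambda_{0,x_k}(\omega)$. The upper density bound $d\bbQ^\infty/d\bbP \leq F$, together with the exponential tails of $p^\lambda_{0,x_k}$ in $k$ guaranteed by (A4), should give $g \in L^1(\bbQ^\infty)$; Birkhoff then yields $\phi(Y_n)/n \to \bbE^\infty[g]$, and combining with the LLN $x_n/n \to \bbE[Z_0]$ coming from (A1)--(A2) delivers \eqref{adsl2}. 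Identity \eqref{adsl1} follows from $\bbY_t = Y_{N_t}$ and an ergodic--theorem computation of the cumulative holding time. Positivity and finiteness of the expectations are consequences of the ballisticity proved in Theorem~\ref{teo1}(ii) together with the uniform density bounds. The main obstacle throughout is the uniform--in--$\rho$ control of $d\bbQ^\rho/d\bbP$ in the absence of the Comets--Popov hypotheses \textbf{E} and \textbf{D}, which is precisely why the capacity--theoretic detour of Sections~\ref{zetino}--\ref{tronco} is needed.
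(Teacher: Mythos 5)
Your proposal follows essentially the same route as the paper: $\rho$--truncated walks with the Comets--Popov regeneration structure yielding $\bbQ^\rho$, uniform bounds $\gamma \leq d\bbQ^\rho/d\bbP \leq F$ obtained through the effective--conductance estimates, passage to the weak limit with invariance and ergodicity of $\bbQ^\infty$, and the velocity identities via Birkhoff sums for the index process plus an ergodic time--change accounting for the holding times. The only cosmetic deviations (invoking Dunford--Pettis instead of tightness/Prohorov, and attributing the lower bound partly to $L^\infty$ control of $r_0^\l$ when it really rests on the uniform positive speed of the truncated walks combined with the hitting estimate of Lemma \ref{lemma:epsilon}) do not change the argument.
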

 \begin{proof}
Theorem \ref{teo2}--(i) is part of Proposition \ref{digiuno} given at the end of Section \ref{tronco}. The proof of Theorem \ref{teo2}--(ii)  is part of Section \ref{ballistic_part}, more precisely \eqref{adsl1} and \eqref{adsl2} are an immediate consequence of \eqref{int_rapr}, \eqref{quasi_fatto} and the observation just after \eqref{radisson}.
 \end{proof}







\subsection{Comments on assumptions (A1)--(A4)}\label{domenica}
By Assumption (A1) both random sequences $(Z_k)_{k \in \bbZ}$ and $(E_k)_{k \in \bbZ}$ are stationary and ergodic with respect to shifts.  
The  physically interesting case is given by  two independent   random sequences $(Z_k)_{k \in \bbZ}$ and $(E_k)_{k \in \bbZ}$, the former  stationary and ergodic,  while the latter  given by i.i.d.~random variables.  In this case assumption (A1) is satisfied (see  Lemma \ref{indi} in 
 Appendix  \ref{appendix:ergodic}).

Assumption (A3) ensures that a.s.~the environment is not periodic. If the energy marks $E_k$ are i.i.d.~and non-constant, as in the physically interesting case, then (A3) is automatically fulfilled. Note that  the sequence $(Z_k)_{k \in \bbZ}$ could be periodic, without violating our assumptions (e.g.~take $Z_k =1$ for all $k \in \bbZ$).  

 Assumption (A4), corresponding to interpoint distances which are  uniformly bounded from below,   is not restrictive  from a physical viewpoint  and $d$ can be taken of the angstrom  order.  On the other hand, 
(A4) plays a crucial role in our proofs.

  \subsection{Examples}\label{esaedro} 

In this section we give two examples highlighting the importance of the assumptions in Theorem \ref{teo1} and showing some of its consequences.

\begin{example}\label{velocino} 
$\bbE\bigl[ e^{ (1-\l) Z_0}  \bigr] = \infty $ does in general {\sl not} imply that  $v_\bbY(\l)=0$, $v_Y(\l) = 0$.

\smallskip

\textnormal{We set $u(\cdot, \cdot) \equiv 0$ and take $p\in (0,1/2)$. We choose  $(Z_k)_{k \in \bbZ}$ as the reversible   Markov chain with values in $\{\g,2\g, 3\g, \ldots \}$ for some {$\g\geq 1$} and with transition probabilities defined as follows:  
\[ \begin{cases}
	P(k\g, (k+1)\g)=p  & \text{ for $k \geq 1$}\,,\\
	P(k\g, (k-1)\g )=1-p & \text{  for $k \geq 2$}\,,\\
 	P(\g,\g)=1-p\,.
 \end{cases}
 \] 
The equilibrium distribution is given by $\pi(k\g) = c(p/(1-p))^k$ for $k \geq 1$, $c$ being the normalizing constant.
Hence, $\bbP (Z_0=k\g) = \pi(k\g )$, for each $k\geq 1 $. Notice that $\bbE\bigl[ e^{ (1-\l) Z_0}  \bigr] = c \sum_{k\geq1} e^{(1-\l)k \g} \bigl(p/(1-p)\bigr)^k$ is infinite if and only if 
\begin{align} \label{eq:p_condition}
\l \leq 1-\frac{1}{\g} \log\frac{1-p}{p}\,.
\end{align}
}
\textnormal{  
We now show that we can choose the parameters such that  $\bbE\bigl[ e^{ (1-\l) Z_0}  \bigr] =\infty$ and   $\sum_k k r^\l_{0,x_k} (\o) >0$ for each $\omega$, the latter implying that $v_\bbY(\l), v_Y(\l)>0$ due to Theorem \ref{teo2}--(ii)  and the definition of $p_{0,x_k}^\l (\o)$  in \eqref{treno}.
\\
If $Z_0=j\g$, for some $j\geq 1$,
 the local drift  $\sum_k k r^\l_{0,x_k} (\o) $ can be bounded from below by the drift of the configuration with longer and longer interpoint distances to the right and shorter and shorter interpoint distances to the left: $Z_k=(j+k)\g$ for all $k\geq -j+1$ and $Z_k=\g$ for all $k\leq -j$. 
Note that in this case
\[
\begin{cases}
 x_k = \g \big[kj + \frac{k(k-1)}{2} \bigr]   & \text{ if } k \geq 1\,,\\
 x_{-k } = - \g \big[ kj - \frac{k(k+1)}{2} \big]  & \text{ if  } 1\leq k \leq j-1\,,\\
x_{-k} = - \g \big[ \frac{j(j-1)}{2} +k-j+1\big] & \text{ if } k \geq j \,.
\end{cases}
\]
Hence we can write
$$
\sum_k k r^\l_{0,x_k} (\o) \geq A(\l,\g,j)-B(\l,\g,j)-C(\l,\g,j),
$$
where $A(\l,\g,j)=\sum\limits_{ k\geq 1}k\,\exp\{-(1-\l)\g(kj+k(k-1)/2)\}$, $B(\l,\g,j)=\sum\limits_{1\leq k\leq j-1}k\,\exp\{-(1+\l)\g(kj-k(k+1)/2)\}$ and $C(\l,\g,j)=\sum\limits_{ k\geq j}k\,\exp\{-(1+\l)\g(j(j-1)/2+k-j+1)\}$.
We bound  $A(\l,\g,j)$ from {below}   with its first summand $\exp\{-(1-\l)\g j\}$ and prove that
\begin{align}\label{eq:ABC}
\lim_{\g\to\infty}\sup_{j\in\N}\big[B(\l,\g,j)+C(\l,\g,j)\big]\exp\{(1-\l)\g j\}<1,
\end{align}
since this will imply the positivity of the local drift  $\sum_k k r^\l_{0,x_k} (\o)$ for any possible $\o$, for $\g$ big enough. 
  Using that $Z_{-1}= \g( j-1)$  we bound $B(\l,\g,j)\leq j^2\exp\{-(1+\l)\g(j-1)\}$  and, using that $j(j-1)/2 + 1 \geq j/2$, we bound 
\begin{align*}
C(\l,\g,j)
	&\leq {\rm e}^{-\frac{(1+\l)}{2}\g j}\Big(\sum_{k\geq j}(k-j){\rm e}^{-(1+\l)\g (k-j)}+j\sum_{{k\geq 0}}{\rm e}^{-(1+\l)\g k}\Big)\leq jK{\rm e}^{-\tfrac{(1+\l)}2\g j},
\end{align*}
for some constant $K>0$ independent of $\l$ and $\g$ {(recall that $\g \geq 1$)}. With these bounds we see that \eqref{eq:ABC} holds as soon as $\l>1/3$, for $\g$ big enough. On the other hand, by \eqref{eq:p_condition} we can choose $p$ close enough to $1/2$ so that $\bbE\bigl[ e^{ (1-\l) Z_0}  \bigr]$  is infinite.
}
\end{example}

%
%
%
%
%
%
%
\begin{example}\label{sorpresa} The velocities 
  $v_{\bbY} (\lambda)$, $v_Y (\l)$  are  not continuous in general.
  
 \smallskip
 
\textnormal{Take  $u \equiv 0$.   Let the  $Z_k$ be  i.i.d.~random variables  larger than $1$ such that $e^{Z_0}$ has probability density 
$f(x) :=   \frac{c}{ x^\g (\ln x)^2} \mathds{1}_{  [e,+\infty)} (x)$, with  $\g \in (1,2)$ and $c$ is the normalizing constant. Since, for $x \geq e$,   $ \frac{1}{x^\g (\ln x)^2 }\leq \frac{1}{x  (\ln x)^2 }= -\frac{d}{dx}( 1/\ln x )$, the constant $c$ is well defined and $E[ e^{ (1-\l)  Z_0} ]= \int_e^\infty  \frac{c x^{1-\l}  }{ x^{\g} (\ln x)^2}dx < \infty  $ if and only if  $\l\geq 2-\g$. Note that $\l_c:= 2-\g \in (0,1)$. Then the above observations and Theorem \ref{teo1} imply that $v_\bbY (\l),v_Y (\l)$ are zero for $\l \in (0, \l_c)$ and are strictly positive for $ \l \in [\l_c , 1)$.
  }
\end{example} 


\section{A class of random walks on $\bbZ$ with jumps of size at most $\rho \in[1,+\infty]$}\label{zetino}

Take $\l \in [0,1)$.
Given $i,j \in \bbZ$ we replace, with a slight abuse of notation,  $  r^\l_{i,j}(\o) := r^\l _{x_i,x_j}(\o)$  and the associated conductance $ c_{i,j}  (\o):=e^{2 \l x_i} r^\l_{i,j}(\o) $ (note that in $c_{i,j}(\o)$  the dependence on $\l$ has been omitted).
Hence we have $c_{i,i} \equiv 0$ and
\begin{equation}\label{defofcond}
{c_{i,j} (\o)=
 e^{ \l(x_i+x_j) -|x_j-x_i  |+u (E_i,E_j)} = c_{j,i}  (\o)  \,\,  \qquad i \not = j \text{ in } \bbZ\,.}
 \end{equation}
Given   $\rho \in \N_+\cup \{+\infty\}$  we introduce the discrete time random walk $(X^{ \rho} _n)_{n \geq 0} $  with environment $\o$
as the Markov chain on $\bbZ$ 
such that the $\o$--dependent probability to jump from $i$ to $j$ in one step is given by
\begin{equation}\label{salto}
\begin{cases} c _{i,j}(\o)/\sum_{k\in\bbZ}c _{i,k}(\o), & \mbox{if }0 < |i-j|\leq \rho \\
0 & \mbox{if } |i-j|>\rho  \\
1- \sum\limits_{j: |j-i| \leq \rho }
 c _{i,j}(\o)/\sum_{k\in\bbZ}c_{i,k}(\o)
 & \mbox{if } i=j.
\end{cases}
\end{equation}

\begin{Warning}\label{attenzione}
When the Markov chain
$(X_n^\rho)_{n \geq 0}$ starts at $i\in \bbZ$, we write $P^{\o, \rho}_i$ for its law and $E^{\o, \rho}_i$ for the associated expectation.  In order to make the notation lighter, inside $P^{\o, \rho}_i(\cdot)$ and $E^{\o, \rho}_i[\cdot ]$ we will  usually write $X_n$ instead of $X_n^\rho$.
\end{Warning}

\medskip

It is convenient to introduce the random bijection $\psi: \bbZ \to \{ x_k\}_{k \in \bbZ}  $   defined as $\psi(i)= x_i$, and also the continuous time random walk $(\bbX^\infty _t)_{t \geq 0}$ on $\bbZ$ with probability rate  $r^\l_{i,j}(\o)$ for a jump from $i$ to $j$.
Since
\[ \frac{c_{i,j}(\o)}{\sum_{k\in\bbZ}c _{i,k}(\o)}=\frac{r^\l _{i,j}(\o)}{\sum_{k\in\bbZ}r^\l _{i,k}(\o)}\,,
\]
we conclude that realizations of $\bbY$ and $Y$ can be obtained as 
\begin{equation}\label{9898}
\bbY _t = \psi (\bbX^\infty _t) \,, \qquad  Y_n= \psi(X^\infty _n)\,.
\end{equation}
In particular, when the denominators are nonzero, we can write
\begin{equation}\label{7575}
\frac{\bbY_t}{t} =  \frac{ \psi (\bbX^\infty _t)  }{ \bbX^\infty _t} \frac{\bbX^\infty _t}{t}\,, \qquad
 \frac{Y_n}{n} =  \frac{ \psi (X^\infty _n)  }{ X^\infty _n} \frac{X^\infty _n}{n}\,.
\end{equation} 
By Assumptions (A1) and (A2), $ \lim _{ i \to \infty} \psi(i) /i= \bbE[Z_0]<\infty $,  $\bbP$--a.s.. By this limit, together with \eqref{9898} and \eqref{7575}, we will  see in Sections \ref{ballistic_part},  \ref{submarine}  that  in order  to prove Theorem \ref{teo1} it is enough to show the same properties for $\bbX ^\infty, X^\infty$ instead of $\bbY,Y$.

In what follows, we write $v_{X^\rho}(\l)=v$ if, for $\bbP$--a.a.~$\o$,  $\lim _{n \to \infty} \frac{ X_n^\rho}{n}=v$ { $P^{\o, \rho}_0$}--a.s.. A similar meaning is assigned to  $v_{\bbX^\rho}(\l)$.

\subsection{Estimates on effective conductances}\label{condottiero}


We take again $\rho \in \N_+ \cup \{+\infty\}$ and $\l \in [0, 1)$.
For $A,B$ disjoint subsets of $\Z$, we introduce the effective conductance between $A$ and $B$ as
\begin{equation}
C^{ \rho } _{\rm eff}(A\leftrightarrow B)
	:=\min\Big\{\sum_{i<j\in\Z: \,|i-j| \leq \rho }c_{i,j}(f(j)-f(i))^2:\,f|_A=0,\,f|_B=1\Big\}\,.
\end{equation}

We also set 
\begin{equation}\label{def_pi}
\pi^{ \rho } (i):=\sum_{j\in\Z\,:\, |j-i| \leq \rho }c_{i,j}\,, \qquad i \in \bbZ\,,
\end{equation}
and define $p_{\rm esc}^{  \rho}(i) $ as the escape probability  of $ X_n^{ \rho} $ from $i\in \bbZ$, i.e.
\begin{equation} p_{\rm esc}^{  \rho}(i) := P _i ^{\o, \rho} ( X_n\not =i \text{ for all } n \geq 1)\,.
\end{equation}
It is known (see the discussion before Theorem 2.3 in \cite[Section 2.2]{LP} and formula (2.4) therein)
that
\begin{equation}\label{fuga}
p_{\rm esc}^{ \rho}(i) := \lim_{N\to\infty}\frac{C^{  \rho} _{\rm eff}(i\leftrightarrow (-\infty,i-N]\cup[i+N,\infty))}{\pi^{ \infty}(i)}
\end{equation}
(recall that the probability for $X_n^{ \rho}$ to jump from $i$ to $j$ (for $0 < |i-j|\leq \rho $)  is given by $c_{i,j} / \pi ^{ \infty}(i)$, cf. \eqref{salto}). We will see (cf.~Corollary  \ref{cor:p_esc}) that the escape probability of each $\rho$--random walk can be uniformly bounded from below and above by the escape probability of the nearest neighbor walk times constants.

\begin{Warning} \label{carnevale}Note that  $C^{ \rho } _{\rm eff}(A\leftrightarrow B)$, $\pi^{ \rho } (i)$ and $p_{\rm esc}^{ \rho}(i)$ depend on the environment $\o$, although we have omitted $\o$ from the notation.
\end{Warning}

\begin{Proposition}\label{prop:effconductance_bound}
There exists  a constant $ K>0$ not depending on $\o, \, \rho,\,A$ and $B$ such that
$$
C^{ 1}_{\rm eff}(A\leftrightarrow B)\leq C^{ \rho} _{\rm eff}(A\leftrightarrow B)\leq K\,C^{ 1}_{\rm eff}(A\leftrightarrow B).
$$
\end{Proposition}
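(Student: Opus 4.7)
The lower bound $C^{1}_{\rm eff}(A\leftrightarrow B) \leq C^{\rho}_{\rm eff}(A\leftrightarrow B)$ is immediate: in the Dirichlet form defining $C^{\rho}_{\rm eff}$, the sum runs over a strictly larger index set ($|i-j|\leq\rho$) than that of $C^{1}_{\rm eff}$ ($|i-j|=1$), while all summands are nonnegative. For any admissible $f$ (i.e.\ $f|_A=0$, $f|_B=1$),
\[
\sum_{i<j,\,|i-j|\leq \rho} c_{i,j}(f(j)-f(i))^2 \;\geq\; \sum_{i<j,\,|i-j|=1} c_{i,j}(f(j)-f(i))^2,
\]
and taking the infimum over admissible $f$ on both sides gives the inequality.

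\paragraph{Strategy for the upper bound.} I would establish a pointwise-in-$f$ inequality: there exists $K=K(\lambda,d,u_{\max}-u_{\min})$ such that, for every $f:\bbZ\to\bbR$ and every $\rho\in\bbN_+\cup\{+\infty\}$,
\[
\sum_{i<j,\,|i-j|\leq \rho} c_{i,j}(f(j)-f(i))^2 \;\leq\; K \sum_{k\in\bbZ} c_{k,k+1}(f(k+1)-f(k))^2.
\]
Since the admissibility condition is the same on both sides, taking the infimum over admissible $f$ then yields $C^{\rho}_{\rm eff}(A\leftrightarrow B)\leq K\, C^{1}_{\rm eff}(A\leftrightarrow B)$.

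\paragraph{Key steps.} First, by telescoping and Cauchy--Schwarz, for every pair $i<j$,
\[
(f(j)-f(i))^2 \;=\;\Bigl(\sum_{k=i}^{j-1}(f(k+1)-f(k))\Bigr)^{\!2} \;\leq\; (j-i)\sum_{k=i}^{j-1}(f(k+1)-f(k))^2.
\]
Substituting and swapping order of summation reduces the problem to bounding, uniformly in $k\in\bbZ$, $\omega$ and $\rho$, the quantity
\[
S_k^\rho(\omega) \;:=\; \sum_{i\leq k<j,\,|j-i|\leq\rho} (j-i)\, c_{i,j}(\omega)
\]
by a constant multiple of $c_{k,k+1}(\omega)$. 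Second, using the identities $x_j-x_i=(x_j-x_{k+1})+(x_{k+1}-x_k)+(x_k-x_i)$ and $x_i+x_j-x_k-x_{k+1}=(x_j-x_{k+1})-(x_k-x_i)$, definition \eqref{defofcond} gives the clean ratio
\[
\frac{c_{i,j}(\omega)}{c_{k,k+1}(\omega)} \;=\; \exp\Bigl\{-(1-\lambda)(x_j-x_{k+1})-(1+\lambda)(x_k-x_i)+u(E_i,E_j)-u(E_k,E_{k+1})\Bigr\}.
\]
Boundedness of $u$ and assumption (A4) (which yields $x_j-x_{k+1}\geq d(j-k-1)$ and $x_k-x_i\geq d(k-i)$) imply
\[
c_{i,j}(\omega) \;\leq\; e^{u_{\max}-u_{\min}}\, c_{k,k+1}(\omega)\, e^{-(1-\lambda)d(j-k-1)-(1+\lambda)d(k-i)}.
\]
Third, setting $a=k-i\geq 0$ and $b=j-k-1\geq 0$,
\[
S_k^\rho(\omega) \;\leq\; e^{u_{\max}-u_{\min}}\, c_{k,k+1}(\omega)\sum_{a,b\geq 0}(a+b+1)\, e^{-(1+\lambda)da}\, e^{-(1-\lambda)db},
\]
and the last double sum converges to a finite constant $M(\lambda,d)$ since $\lambda\in[0,1)$ and $d>0$. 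Taking $K:=e^{u_{\max}-u_{\min}}M(\lambda,d)$, this constant is independent of $\omega$, $\rho$, $A$, $B$, $k$, and the pointwise-in-$f$ bound, hence the proposition, follows.

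\paragraph{Main obstacle.} The only delicate point is the uniformity of the constant $K$. Uniformity in $\rho$ is automatic from the extension of the sum to all $a,b\geq 0$; uniformity in $\omega$ is where the lower bound on inter-point distances in (A4) is essential, since the geometric decay rate $d$ is deterministic. Without (A4), one could only bound the relevant ratio by the random quantities $Z_k$, and uniform convergence of the double series would fail.
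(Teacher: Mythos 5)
Your proof is correct and follows essentially the same route as the paper's: monotonicity in $\rho$ for the lower bound, and for the upper bound the telescoping/Cauchy--Schwarz reduction to nearest-neighbour increments followed by bounding the aggregated conductance $\sum_{i\leq k<j}(j-i)c_{i,j}$ by $K\,c_{k,k+1}$ via assumption (A4) and the boundedness of $u$. Your explicit ratio formula for $c_{i,j}/c_{k,k+1}$ with the factor $e^{u_{\max}-u_{\min}}$ is in fact slightly more careful than the paper's bound, which quietly uses $e^{u_{\max}}$ and implicitly $u(E_z,E_{z+1})\geq u_{\min}$ at the same spot.
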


\begin{proof}
 Since $ C^{\rho} _{\rm eff}(A\leftrightarrow B)$ is increasing in $\rho$,   it is enough to show the second inequality for $\rho=\infty$. To this aim
take any valid $f:\Z\to\R$ and note that
\begin{align}
\sum_{i<j\in\Z}c _{i,j}(f(j)-f(i))^2
&=\sum_{i<j\in\Z}c_{i,j}\Big(\sum_{z=i}^{j-1}f(z+1)-f(z)\Big)^2 \nonumber\\
&\leq\sum_{i<j\in\Z}c_{i,j} \cdot(j-i)\sum_{z=i}^{j-1}\big(f(z+1)-f(z)\big)^2 \nonumber\\
&=\sum_{z\in\Z}\big(f(z+1)-f(z)\big)^2 \Big[\sum_{i\leq z}\sum_{j\geq z+1}c_{i,j}\cdot(j-i)\Big],
\end{align}
where we have used the Cauchy-Schwarz inequality for the second step.
Define the new conductances
$$
\bar c_{z,z+1}=\sum_{i\leq z}\sum_{j\geq z+1}c_{i,j}\cdot(j-i).
$$
Now we are left to show that $\bar c_{z,z+1} \leq K\,c_{z,z+1}$ for some $K$ 
and this will conclude the proof. Using the fact that $\forall k>k'$ we have $x_k-x_{k'}>d\,(k-k')$, we have
{\begin{align}
\bar c_{z,z+1}
	&=	\sum_{i\leq z}\sum_{j\geq z+1}{\rm e}^{\l (x_i+x_j)-(x_j-x_i)+u(E_i,E_j)}(j-i) \nonumber\\
	&\leq {\rm e}^{\l (x_z+x_{z+1})-(x_{z+1}-x_z)+u_{\max}}\sum_{i\leq z}\sum_{j\geq z+1}{\rm e}^{-(x_j-x_{z+1})(1-\l)}{\rm e}^{-(x_z-x_i)(1+\l)}(j-i) \nonumber\\
	&\leq c_{z,z+1} {\rm e}^{u_{\max}} \sum_{i\leq z}\sum_{j\geq z+1}{\rm e}^{-d(j-z-1)(1-\l)}{\rm e}^{-d(z-i)(1+\l)}(j-i) \nonumber\\
	&\leq c_{z,z+1} {\rm e}^{u_{\max}} \sum_{l=0}^\infty \sum_{h=0}^\infty{\rm e}^{-d(1-\l)h}{\rm e}^{-d(1+\l)l}(h+l+1). \nonumber
\end{align}}
Since the last double sum is bounded for each $\l\in [0,1)$, we obtain the claim.
\end{proof}

\begin{Lemma}\label{lorenzo}
 There exists  a constant $K>0$ which does not depend on $\o, \, \rho$ such that
 $$ \pi^{ 1} (k) \leq \pi^{  \rho}(k) \leq K  \pi^{ 1} (k) \,, \qquad \forall  k \in \bbZ\,.
 $$
\end{Lemma}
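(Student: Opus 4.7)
The lower bound $\pi^1(k)\le\pi^\rho(k)$ is immediate from the definition \eqref{def_pi}, since adding more non-negative summands (when $\rho\ge 1$) can only increase the sum. The real content is the upper bound $\pi^\rho(k)\le K\pi^1(k)$, and here I would follow essentially the same strategy already used at the end of the proof of Proposition \ref{prop:effconductance_bound}: factor out the nearest-neighbor conductance and control what is left by a geometric series.

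More precisely, I would split
\[
\pi^\rho(k)=\sum_{j>k,\,j-k\le\rho}c_{k,j}+\sum_{j<k,\,k-j\le\rho}c_{k,j}
\]
and bound each half by a constant times $c_{k,k+1}$, respectively $c_{k,k-1}$. From the explicit formula \eqref{defofcond} one computes directly, for $j>k$,
\[
\frac{c_{k,j}}{c_{k,k+1}}=e^{-(1-\lambda)(x_j-x_{k+1})}\,e^{u(E_k,E_j)-u(E_k,E_{k+1})},
\]
and for $j<k$,
\[
\frac{c_{k,j}}{c_{k,k-1}}=e^{-(1+\lambda)(x_{k-1}-x_j)}\,e^{u(E_k,E_j)-u(E_k,E_{k-1})}.
\]
The $u$-differences are deterministically bounded by $u_{\max}-u_{\min}$, and Assumption (A4) gives $x_j-x_{k+1}\ge d(j-k-1)$ and $x_{k-1}-x_j\ge d(k-1-j)$.

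Summing over $j$ yields two geometric series in $e^{-(1-\lambda)d}$ and $e^{-(1+\lambda)d}$ respectively, both of which converge for $\lambda\in[0,1)$ to quantities depending only on $\lambda$ and $d$, not on $\omega$ or $\rho$. Thus
\[
\pi^\rho(k)\le e^{u_{\max}-u_{\min}}\Bigl(\tfrac{1}{1-e^{-(1-\lambda)d}}\,c_{k,k+1}+\tfrac{1}{1-e^{-(1+\lambda)d}}\,c_{k,k-1}\Bigr)\le K\,\pi^1(k),
\]
with a universal $K$, completing the argument.

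There is no serious obstacle here; the only subtle point is to verify that the constant produced is genuinely independent of both $\omega$ and $\rho$, which is guaranteed by the deterministic lower bound $d$ on interpoint distances coming from (A4) and by the uniform boundedness of $u$. In particular the argument does not care whether $\rho$ is finite or $+\infty$.
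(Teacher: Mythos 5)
Your proof is correct and follows essentially the same route as the paper: monotonicity in $\rho$ gives the lower bound, and the upper bound is obtained by factoring out the nearest-neighbor conductance, bounding the $u$-differences uniformly, and summing a geometric series via the deterministic gap $d$ from (A4). (Your exponent $-(1+\lambda)(x_{k-1}-x_j)$ for the left sum is in fact the exact identity, slightly sharper than the $(1-\lambda)$ bound displayed in the paper, but the conclusion is the same.)
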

\begin{proof} Since $\pi^{ \rho}(k) $ is increasing in $\rho$ it is enough to prove that  $\pi^{ \infty}(k) \leq K  \pi^{ 1} (k)$ for all $k\in \bbZ$.
 We easily see that
{\begin{equation}\label{rione1}
\begin{split}
\sum_{j>k}c_{k,j}&
	={\rm e}^{\l (x_{k+1}+x_k)-(x_{k+1}-x_k)} \sum_{j>k}{\rm e}^{- (1-\l)(x_j-x_{k+1})+u(E_j,E_k)}\\
	&\leq K_1\,c_{k,k+1}\sum_{j>k} {\rm e}^{-d (j-k-1)(1-\l)} =K_2 \,c_{k,k+1}.
\end{split}
\end{equation}}
Analogously,
{\begin{equation}\label{rione2}
\begin{split}
	\sum_{j<k}c_{k,j}
	&={\rm e}^{\l (x_{k-1}+x_k)-(x_{k}-x_{k-1})} \sum_{j<k}{\rm e}^{- (x_{k-1}-x_j)(1-\l)+			u(E_j,E_k)}\\
	&\leq K_3\,c_{k-1,k}\sum_{j>k} {\rm e}^{- d(k-1-j))(1-\l)} =K_4 \,c_{k-1,k}.
\end{split}
\end{equation}}
\vspace{-0.25cm}
\end{proof}
As a byproduct of \eqref{fuga}, Prop. \ref{prop:effconductance_bound} and Lemma \ref{lorenzo}, we get:
\begin{Corollary}\label{cor:p_esc}
There exist constants $K_1,K_2>0$  which do not depend on $\o, \, \rho$ such that
\[
K_1  p_{\rm esc}^{1} (i)\leq p_{\rm esc}^{ \rho } (i)  \leq K_2  p_{\rm esc}^{ 1} (i)\,, \qquad \forall i \in \bbZ\,.
\]
\end{Corollary}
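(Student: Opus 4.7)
The plan is to invoke the conductance representation \eqref{fuga} of the escape probability for the $\rho$--truncated walk and then feed it through the two--sided sandwich of Proposition \ref{prop:effconductance_bound}. The key observation, from \eqref{salto}, is that the normalization appearing in \eqref{fuga} is the same $\pi^{\infty}(i)$ for every $\rho\in\N_+\cup\{+\infty\}$: the $\rho$--truncation only removes long jumps by reallocating their probability mass to the self-loop at $i$, so $p_{\rm esc}^\rho(i)$ differs from $p_{\rm esc}^1(i)$ only through the effective conductance in the numerator. Consequently the ratio $p_{\rm esc}^\rho(i)/p_{\rm esc}^1(i)$ reduces to a ratio of effective conductances, which is exactly what Proposition \ref{prop:effconductance_bound} controls.

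Concretely, I would fix $i\in\Z$ and abbreviate $A_N:=(-\infty,i-N]\cup[i+N,+\infty)$. Applying Proposition \ref{prop:effconductance_bound} with $A=\{i\}$ and $B=A_N$ gives, uniformly in $\omega,\rho,i,N$,
\[
C^{1}_{\rm eff}(i\leftrightarrow A_N)\;\leq\;C^{\rho}_{\rm eff}(i\leftrightarrow A_N)\;\leq\;K\,C^{1}_{\rm eff}(i\leftrightarrow A_N).
\]
Since $A_N$ decreases (as a set) in $N$ and pushing the sink farther from the source cannot increase the effective conductance (immediate from the Dirichlet variational characterization of $C_{\rm eff}$), each term is monotone nonincreasing in $N$ and bounded below by $0$, so the limits as $N\to\infty$ exist and the inequality survives. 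Dividing by $\pi^{\infty}(i)>0$ and applying \eqref{fuga} twice (once for generic $\rho$, once for $\rho=1$) then yields
\[
p_{\rm esc}^{1}(i)\;\leq\;p_{\rm esc}^{\rho}(i)\;\leq\;K\,p_{\rm esc}^{1}(i),
\]
so the statement holds with $K_1=1$ and $K_2=K$, both independent of $\omega$ and $\rho$.

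I do not foresee any substantial obstacle: the corollary is essentially a one-line substitution once Proposition \ref{prop:effconductance_bound} is in hand, the only minor point being the $N$-monotonicity needed to pass to the limit, which is standard. I note in passing that Lemma \ref{lorenzo}, though listed among the ingredients, does not seem strictly necessary for this particular argument, precisely because the common normalization $\pi^{\infty}(i)$ in both escape--probability formulas cancels when one takes the ratio; the lemma would only be required if one preferred to re-express \eqref{fuga} through the jump-chain conductance $\pi^{\rho}(i)$ rather than the holding-time normalization $\pi^{\infty}(i)$.
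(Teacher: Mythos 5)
Your proof is correct and follows essentially the same route as the paper, which obtains the corollary directly from \eqref{fuga} combined with Proposition \ref{prop:effconductance_bound} (the paper also lists Lemma \ref{lorenzo} among the ingredients). Your side remark is accurate as well: since \eqref{fuga} is normalized by $\pi^{\infty}(i)$ for every $\rho$ (a consequence of the self-loop in \eqref{salto}), the comparison of $\pi^{\rho}(i)$ with $\pi^{1}(i)$ from Lemma \ref{lorenzo} cancels in the ratio and would only be required if one worked with the $\pi^{\rho}$-normalized version of the escape-probability formula.
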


The following lemma is well known and corresponds to formula (2.1.4) in \cite{OferStFlour}:
\begin{Lemma}\label{lemma:effconductance_nn}
Let $\{\bar c_{k,k+1}\}_{k\in\Z}$ be any system of strictly positive conductances on the nearest neighbor bonds of  $\Z$. Let $H_A$ be the first hitting time of the set $A\subset \Z$ for the associated discrete time  nearest--neighbor random walk among the conductances $\{\bar c_{k,k+1}\}_{k\in\Z}$, which jumps  from  $k$ to $k\pm 1$ with probability
$\bar c_{k, k\pm 1} / ( \bar c_{k,k-1} +\bar c_{k,k+1})$.
Take $-\infty<M<x<N<\infty$, with $M,x,N\in\Z$ and write $H_M, H_N$ for $H_{\{M\}}, H_{\{N\}}$. Then
\[
\label{hittingtimes}
P^{n.n.}_x(H_M< H_N)=
	\frac{C_{\rm eff}^{n.n.}(x\leftrightarrow (-\infty,M])}{C_{\rm eff}^{n.n.}(x\leftrightarrow (-\infty,M]\cup[N,\infty))},
\]
where $P^{n.n.}_x$ is the probability for the nearest-neighbor random walk starting at $x$, and $C_{\rm eff}^{n.n.}(A \leftrightarrow B)$ is the effective conductance of the nearest-neighbor walk between $A$ and $B$.
\end{Lemma}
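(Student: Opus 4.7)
My plan is to reduce the identity to the classical gambler's-ruin/voltage computation on a finite segment. The first observation is that a nearest-neighbor walk starting at $M<x<N$ cannot enter $(-\infty,M]$ without first visiting $M$, nor $[N,\infty)$ without first visiting $N$, so under $P_x^{n.n.}$ the events $\{H_{(-\infty,M]}<H_{[N,\infty)}\}$ and $\{H_M<H_N\}$ coincide. On the electrical-network side, I would contract $(-\infty,M]$ to the single vertex $M$ and $[N,\infty)$ to the single vertex $N$: every edge strictly inside these half-lines becomes a loop and therefore carries no current, so neither effective conductance appearing in the statement is altered by the contraction.

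Setting $\rho_k:=1/\bar c_{k,k+1}$ and working on the resulting finite network $\{M,M+1,\dots,N\}$, the series law gives
\[
C_{\rm eff}^{n.n.}(x\leftrightarrow M)=\Bigl(\sum_{k=M}^{x-1}\rho_k\Bigr)^{-1},\qquad C_{\rm eff}^{n.n.}(x\leftrightarrow N)=\Bigl(\sum_{k=x}^{N-1}\rho_k\Bigr)^{-1}.
\]
Since $\{M,N\}$ is disconnected in the network with $x$ removed, contracting $M$ and $N$ to a single node places these two conductances in parallel, so
\[
C_{\rm eff}^{n.n.}(x\leftrightarrow(-\infty,M]\cup[N,\infty))=C_{\rm eff}^{n.n.}(x\leftrightarrow M)+C_{\rm eff}^{n.n.}(x\leftrightarrow N).
\]

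For the hitting probability I would use a voltage argument. Let $v(k):=P_k^{n.n.}(H_M<H_N)$ for $M\le k\le N$: then $v(M)=1$, $v(N)=0$, and $v$ is harmonic at each interior point, which forces the current $I:=\bar c_{k,k+1}(v(k)-v(k+1))$ to be independent of $k\in\{M,\dots,N-1\}$. Summing $v(k)-v(k+1)=I\rho_k$ from $M$ to $N-1$ and using $v(M)-v(N)=1$ yields $I=1/\sum_{k=M}^{N-1}\rho_k$, hence
\[
P_x^{n.n.}(H_M<H_N)=v(x)=\frac{\sum_{k=x}^{N-1}\rho_k}{\sum_{k=M}^{N-1}\rho_k}.
\]
A direct simplification of
\[
\frac{C_{\rm eff}^{n.n.}(x\leftrightarrow M)}{C_{\rm eff}^{n.n.}(x\leftrightarrow M)+C_{\rm eff}^{n.n.}(x\leftrightarrow N)}
\]
using the explicit formulas above produces exactly the same value, which matches the right-hand side of the lemma by the contraction step and concludes the proof.

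There is no real technical obstacle: the statement is a textbook identity of electrical-network theory and could alternatively just be quoted from \cite{OferStFlour}. The only point that deserves care is justifying that passing from the original (possibly infinite) network to the finite segment $\{M,\dots,N\}$ preserves both sides of the equality, and the nearest-neighbor assumption makes this automatic via the contraction-of-loops argument above.
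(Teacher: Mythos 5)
Your proof is correct. Note, however, that the paper does not actually prove this lemma: it simply quotes it as a known identity, namely formula (2.1.4) in \cite{OferStFlour} (you anticipate this yourself at the end). So your argument is a genuinely self-contained alternative to the paper's citation, and it is the standard one: reduce both sides to the finite segment $\{M,\dots,N\}$ (the reduction is the only delicate point, and you handle it correctly — in the variational definition of the effective conductances the optimal potential is constant on each half-line $(-\infty,M]$ and $[N,\infty)$, and constant to the right of $x$ in the computation of $C_{\rm eff}^{n.n.}(x\leftrightarrow(-\infty,M])$, so edges there contribute nothing); then the series law gives the two one-sided conductances, the fact that $x$ separates the two half-lines gives the parallel law, and the gambler's-ruin computation via the harmonic function $v(k)=P^{n.n.}_k(H_M<H_N)$ with constant current gives $v(x)=\bigl(\sum_{k=x}^{N-1}\rho_k\bigr)/\bigl(\sum_{k=M}^{N-1}\rho_k\bigr)$, which matches the conductance ratio after clearing denominators. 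What your route buys is a complete elementary verification within the paper's own variational definition of $C_{\rm eff}$, at the cost of a page of routine computation that the authors chose to outsource to the literature; the two are otherwise the same classical electrical-network fact.
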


 We state another technical lemma which will be frequently used when dealing with conductances:
\begin{Lemma}\label{goldie}
 $\bbP\left( \sum_{j=0}^\infty \frac{1}{c_{j,j+1} } <+\infty\right)= 1$.
\end{Lemma}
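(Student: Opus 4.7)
The plan is to show that $1/c_{j,j+1}(\o)$ decays exponentially fast in $j$ for $\bbP$-almost every $\o$. Using \eqref{defofcond} together with $x_{j+1}-x_j=Z_j$ and $x_{j+1}+x_j=2x_j+Z_j$, one has
$$\frac{1}{c_{j,j+1}(\o)}=\exp\bigl\{-2\l x_j+(1-\l) Z_j-u(E_j,E_{j+1})\bigr\}\leq e^{-u_{\min}}\exp\bigl\{-2\l x_j+(1-\l) Z_j\bigr\},$$
where the inequality uses the deterministic lower bound $u\geq u_{\min}$. It thus suffices to control the asymptotic behavior of the exponent $-2\l x_j+(1-\l) Z_j$ as $j\to\infty$.

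For the negative contribution I would invoke Assumption (A4): since $Z_k\geq d$ $\bbP$-a.s.~for every $k\in\bbZ$, we have $x_j\geq jd$ deterministically, so $-2\l x_j\leq -2\l d\,j$. For the positive term I would apply the pointwise ergodic theorem under (A1)--(A2): $x_j/j\to \bbE[Z_0]$ $\bbP$-a.s.; hence also $x_{j+1}/j\to\bbE[Z_0]$, and consequently $Z_j/j=(x_{j+1}-x_j)/j\to 0$ $\bbP$-a.s. Fixing any $\e\in\bigl(0,\,2\l d/(1-\l)\bigr)$, for $\bbP$-a.a.~$\o$ there is a finite $J(\o)$ with $(1-\l) Z_j\leq (1-\l)\e j$ for all $j\geq J(\o)$.

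Combining the two estimates, for $j\geq J(\o)$ one obtains
$$\frac{1}{c_{j,j+1}(\o)}\leq e^{-u_{\min}}\exp\bigl\{-\bigl(2\l d-(1-\l)\e\bigr) j\bigr\},$$
a geometrically summable sequence (the exponent rate is strictly negative by our choice of $\e$), while the contribution of the finitely many terms $j<J(\o)$ is a.s.~finite. Therefore $\bbP\bigl(\sum_{j=0}^\infty 1/c_{j,j+1}(\o)<+\infty\bigr)=1$.

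The only delicate point is the a.s.~statement $Z_j=o(j)$: since the $Z_k$ are merely stationary and ergodic (not independent), I cannot invoke Borel--Cantelli directly on a generic tail bound for $Z_j$, and instead derive it by writing $Z_j$ as a difference of partial sums and applying the Birkhoff ergodic theorem as above. Note also that the lemma implicitly needs $\l>0$, since for $\l=0$ one has $1/c_{j,j+1}\geq e^{d-u_{\max}}>0$ and the series diverges; this is consistent with its use in the biased setting.
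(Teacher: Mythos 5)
Your proof is correct and follows essentially the same route as the paper: both arguments rewrite $1/c_{j,j+1}$ as an exponential of $-\l(x_j+x_{j+1})+(x_{j+1}-x_j)$ (up to the bounded $u$-term) and use Birkhoff's ergodic theorem via $x_j/j\to\bbE[Z_0]$ to show the exponent decays linearly in $j$ (the paper gets the rate $-2\l\bbE[Z_0]$ directly, you get $-(2\l d-(1-\l)\e)$ by combining the deterministic bound $x_j\geq jd$ from (A4) with $Z_j=o(j)$), after which geometric summability gives the claim. Your remark that $\l>0$ is implicitly needed is accurate and consistent with the paper's use of the lemma.
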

\begin{proof}
By assumption (A1), $(x_{j+1}-x_j) _{j \in \bbZ}$ is a stationary ergodic sequence. By writing $x_j= \sum _{k=0}^{j-1}(x_{k+1}-x_k)$, the ergodic theorem  implies that
$ \lim _{j \to \infty} \frac{x_j}{j}= \bbE[x_1]$,  $ \bbP$--a.s.
As a consequence we get that
\[ \lim _{j \to \infty}  \frac{ - \l (x_j+x_{j+1} )+ (x_{j+1}-x_j)}{j}= -2 \l \bbE[x_1]< 0\,, \qquad \bbP\text{--a.s.}
\]
Since $ \sum_{j=0}^\infty \frac{1}{c_{j,j+1} } = \sum_{j=0}^\infty {\rm e}^{- \l (x_j+x_{j+1} )+ (x_{j+1}-x_j) }$, the 
claim follows.
\end{proof}
We conclude this section with a simple estimate leading to an exponential decay of the transition probabilities:
\begin{Lemma}\label{pierpaolo} There exists a constant $K$ which does not depend on $ \o , \rho$, such that $\bbP$--a.s.
\begin{equation}\label{eqn:condition_c}
P ^{\o, \rho}_i ( |X_1-i| >s) \leq     \sum_{j:|j-i|>s}\frac{c _{i,j} }{\sum_{k\in\Z}c_{i,k} }\leq
K e^{-d s (1-\l) }   \qquad \forall s,\rho  \in\N_+\cup \{+\infty\} \,, \; \forall i \in \bbZ\,.
 \end{equation}
\end{Lemma}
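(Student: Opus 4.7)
The first inequality is essentially built into the definition of the transition kernel. By \eqref{salto}, for any $\rho\in\N_+\cup\{+\infty\}$ and any $s\in\N_+$,
\[
P^{\o,\rho}_i(|X_1-i|>s)=\!\!\sum_{j:\,s<|j-i|\leq\rho}\!\!\frac{c_{i,j}(\o)}{\sum_{k\in\Z}c_{i,k}(\o)}\leq \sum_{j:\,|j-i|>s}\frac{c_{i,j}(\o)}{\sum_{k\in\Z}c_{i,k}(\o)},
\]
where the inequality is a triviality (and the left side is $0$ if $s\geq\rho$). Thus the real content is the second inequality, which no longer depends on $\rho$.

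To prove it, I would split the range $|j-i|>s$ into $j\geq i+s+1$ and $j\leq i-s-1$, and use the two lower bounds $\sum_{k}c_{i,k}\geq c_{i,i+1}$ and $\sum_{k}c_{i,k}\geq c_{i,i-1}$. For $j\geq i+s+1$, the explicit formula \eqref{defofcond} gives, after factoring out the $(i,i+1)$ term,
\[
\frac{c_{i,j}(\o)}{c_{i,i+1}(\o)}=e^{-(1-\l)(x_j-x_{i+1})+u(E_i,E_j)-u(E_i,E_{i+1})}\leq e^{u_{\max}-u_{\min}}\,e^{-(1-\l)(x_j-x_{i+1})},
\]
and by assumption (A4) we have $x_j-x_{i+1}\geq d(j-i-1)$. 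Summing the resulting geometric series,
\[
\sum_{j\geq i+s+1}\frac{c_{i,j}(\o)}{c_{i,i+1}(\o)}\leq e^{u_{\max}-u_{\min}}\sum_{m\geq s}e^{-(1-\l)dm}=\frac{e^{u_{\max}-u_{\min}}}{1-e^{-(1-\l)d}}\,e^{-(1-\l)ds}.
\]
An entirely symmetric computation, factoring out $c_{i,i-1}$ and writing $|x_i-x_j|=x_i-x_j$, yields
\[
\frac{c_{i,j}(\o)}{c_{i,i-1}(\o)}\leq e^{u_{\max}-u_{\min}}\,e^{-(1+\l)(x_{i-1}-x_j)}\qquad(j\leq i-2),
\]
and then $\sum_{j\leq i-s-1}c_{i,j}/c_{i,i-1}\leq K'\,e^{-(1+\l)ds}\leq K'\,e^{-(1-\l)ds}$. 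Adding the two pieces proves the second inequality with $K:=2e^{u_{\max}-u_{\min}}/(1-e^{-(1-\l)d})$, a constant depending only on $d,\l,u_{\max},u_{\min}$ (hence independent of $\o$, $\rho$, $i$ and $s$).

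The argument is a short direct calculation; there is no serious obstacle. The only thing to keep track of is the correct factorization of $c_{i,j}$ against $c_{i,i\pm1}$, which is exactly the same algebraic manoeuvre already used in Proposition \ref{prop:effconductance_bound} and Lemma \ref{lorenzo}, so the estimate is essentially a corollary of that technique.
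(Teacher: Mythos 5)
Your proof is correct and follows essentially the same route as the paper's: the first inequality is immediate from the definition \eqref{salto}, and the second is obtained by factoring $c_{i,j}$ against $c_{i,i+1}$ (resp.\ $c_{i,i-1}$) for $j>i+s$ (resp.\ $j<i-s$), invoking (A4) to reduce to geometric series exactly as in \eqref{rione10}--\eqref{rione20}. The only cosmetic difference is that you lower-bound the denominator directly by $c_{i,i\pm1}$, whereas the paper routes this trivial step through Lemma \ref{lorenzo}; the substance is identical.
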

\begin{proof} The first inequality follows from the definitions. To prove the second one, 
 we can estimate 
\begin{equation}\label{rione10}
\begin{split}
\sum_{j>i+s}c_{i,j}&
	={\rm e}^{\l (x_{i+1}+x_i)-(x_{i+1}-x_i)}  \sum_{j>i+s}  
	{{\rm e}^{- (x_j-x_{i+1})(1-\l)+u(E_j,E_i)}} \\
	&\leq K_1\,c_{i,i+1}\sum_{j>i+s} {\rm e}^{-d (j-i-1)(1-\l)} =K_2 e^{-d s (1-\l) } c_{i,i+1}\,,
\end{split}
\end{equation}
\begin{equation}\label{rione20}
\begin{split}
\sum_{j<i-s}c_{i,j}&
	={\rm e}^{\l (x_{i-1}+x_i)-(x_{i}-x_{i-1})} \sum_{j<i-s}
	{ {\rm e}^{- (x_{i-1}-x_j)(1-\l)+u(E_j,E_i)}}\\
	&\leq K_3\,c_{i-1,i}^\l\sum_{j>i-s} {\rm e}^{- d(i-1-j)(1-\l)} =K_4e^{-d s (1-\l) }  c_{i-1,i}.
\end{split}
\end{equation}
The second bound in \eqref{eqn:condition_c} now follows from \eqref{rione10}, \eqref{rione20}  and Lemma \ref{lorenzo}.
\end{proof}

\subsection{Expected number of visits}\label{ECD}
We fix some notations which will be frequently used below.
For $I\subseteq \{0,1,2, \dots \}$ and $A\subset \Z$,  we denote by $N^\rho _I(A)$ the time spent by the random walk $X_n^\rho$ in the set $A$ during the time interval $I$:
\begin{equation*}
N^{\rho}_I(A):=\sum_{k\in I}\mathds{1}_{X^{ \rho} _k\in A}\,.
\end{equation*} If $I:= \{0,1,2,\dots\}$ we simply write $N_\infty^{ \rho} (A)$ and if $A=\{x\}$ we write
 $N_\infty^{ \rho} (x)$.

\begin{Warning}\label{att1}
When appearing inside $\bbP^{\o, \rho}(\cdot)$ or $\bbE^{\o, \rho}(\cdot)$, $N_I(A), N_\infty (A)$  will usually  replace $N^\rho_I(A), N^\rho_\infty (A)$.
\end{Warning}



We can state our main result on the expected number of visits to a site $k$ for a given environment:
\begin{Proposition}\label{prop:function_g}
There exists a constant $K_0$, not depending on $\l, \rho, \o$, such that the function
$g_\o: \{0,1,\dots\}\to \bbR_+ $,  defined as
\begin{equation}\label{def_g}g_\o(n):= K_0 \pi^1 (-n) \sum _{j=0}^\infty
{\rm e}^{-2 \l x_j+ (1-\l)(x_{j+1}-x_j)}, \qquad n \geq 0\,,
\end{equation}
satisfies
\begin{equation}\label{sicilia27}
E^{\o,\rho }_0[N_\infty(k)]\leq g_\o (|k|) \,, \qquad \forall k \leq 0\,.
\end{equation} 
\end{Proposition}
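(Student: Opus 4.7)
The proof combines reversibility of the $\rho$-walk with effective-resistance estimates. From \eqref{salto} the transition kernel is $P^{\o,\rho}(i,j)=c_{i,j}/\pi^{\infty}(i)$ for $0<|i-j|\le\rho$, so detailed balance (using $c_{i,j}=c_{j,i}$) shows that $\pi^{\infty}$ (not $\pi^\rho$) is reversible for $(X_n^\rho)$. Summing the detailed-balance relation $\pi^\infty(0)P^n(0,k)=\pi^\infty(k)P^n(k,0)$ over $n\ge 0$ gives, for every $k\in\bbZ$,
\[
\pi^{\infty}(0)\, E_0^{\o,\rho}[N_\infty(k)] \;=\; \pi^{\infty}(k)\, E_k^{\o,\rho}[N_\infty(0)].
\]
A strong-Markov decomposition at the first hitting time of $0$ yields $E_k^{\o,\rho}[N_\infty(0)] = P_k^{\o,\rho}(T_0<\infty)\, E_0^{\o,\rho}[N_\infty(0)] \le E_0^{\o,\rho}[N_\infty(0)]$, and $E_0^{\o,\rho}[N_\infty(0)] = 1/p_{\mathrm{esc}}^{\rho}(0) = \pi^{\infty}(0)/C_{\mathrm{eff}}^{\rho}(0\leftrightarrow\pm\infty)$ by \eqref{fuga}. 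The $\pi^\infty(0)$ factors cancel and we get
\[
E_0^{\o,\rho}[N_\infty(k)] \;\le\; \frac{\pi^{\infty}(k)}{C_{\mathrm{eff}}^{\rho}(0\leftrightarrow\pm\infty)}.
\]

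The next step is to replace this effective conductance by something both uniform in $\rho$ and explicit. The monotonicity $C_{\mathrm{eff}}^{\rho}\ge C_{\mathrm{eff}}^{1}$ is the easy half of Proposition \ref{prop:effconductance_bound}. Viewing the nearest-neighbour chain on $\bbZ$ as two half-lines connected in parallel at $0$ (parallel conductances add), we discard the left half-line and get
\[
C_{\mathrm{eff}}^{\rho}(0\leftrightarrow\pm\infty) \;\ge\; C_{\mathrm{eff}}^{1}(0\leftrightarrow +\infty) \;=\; \Big(\sum_{j\ge 0} 1/c_{j,j+1}\Big)^{-1}.
\]
Lemma \ref{lorenzo} then controls the numerator: $\pi^{\infty}(k)\le K\,\pi^{1}(k)$.

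Finally, a direct computation from \eqref{defofcond} gives
\[
\frac{1}{c_{j,j+1}} \;=\; e^{-\lambda(x_j+x_{j+1})+(x_{j+1}-x_j)-u(E_j,E_{j+1})} \;\le\; e^{-u_{\min}}\, e^{-2\lambda x_j+(1-\lambda)(x_{j+1}-x_j)},
\]
upon writing $-\lambda(x_j+x_{j+1}) = -2\lambda x_j - \lambda(x_{j+1}-x_j)$ and using $u\ge u_{\min}$. Substituting, setting $k=-n\le 0$ (so $\pi^1(k)=\pi^1(-n)$), and absorbing $K\,e^{-u_{\min}}$ into a single constant $K_0$ yields \eqref{sicilia27}. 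No serious obstacle is foreseen; the main subtlety is to recognise that the correct reversible measure for the $\rho$-walk is $\pi^{\infty}$ and to exploit parallel-resistance monotonicity to reduce the obscure $C_{\mathrm{eff}}^\rho$ to the concrete one-sided nearest-neighbour sum $\sum_{j\ge 0}1/c_{j,j+1}$, which already has the exponential shape entering the definition of $g_\o$.
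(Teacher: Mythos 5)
Your proof is correct, and it takes a genuinely different route from the paper's. The paper decomposes $E^{\o,\rho}_0[N_\infty(k)]$ as $P^{\o,\rho}_0(\text{reach } k)\cdot 1/p^\rho_{\rm esc}(k)$, and must then control \emph{two} quantities: the hitting probability, via the conductance-ratio formula of \cite{BGP} together with Proposition \ref{prop:effconductance_bound}, and the escape probability at $k$, via \eqref{fuga} and Corollary \ref{cor:p_esc}; this also forces the two-sided series computation \eqref{decompongo}--\eqref{renato_zero} and a separate treatment of $k=0$. You instead observe that $\pi^\infty$ is reversible for the $\rho$-walk (correct: detailed balance only involves off-diagonal entries, where the kernel is $c_{i,j}/\pi^\infty(i)$ with $c_{i,j}=c_{j,i}$), use the Green-function symmetry $\pi^\infty(0)E_0[N_\infty(k)]=\pi^\infty(k)E_k[N_\infty(0)]$ and the trivial bound $E_k[N_\infty(0)]\le E_0[N_\infty(0)]=1/p^\rho_{\rm esc}(0)$, so that only the escape probability at the origin is needed; the remaining ingredients (monotonicity $C^\rho_{\rm eff}\ge C^1_{\rm eff}$, the parallel/series reduction to $\bigl(\sum_{j\ge0}1/c_{j,j+1}\bigr)^{-1}$, Lemma \ref{lorenzo}, and the exact identity $-\l(x_j+x_{j+1})+(x_{j+1}-x_j)=-2\l x_j+(1-\l)(x_{j+1}-x_j)$) are the same ones the paper uses, and your final bound $K\pi^1(k)\sum_{j\ge0}1/c_{j,j+1}$ coincides with the paper's intermediate bound in \eqref{riposino}. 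What your route buys: no hitting-probability estimate, no use of Corollary \ref{cor:p_esc} at the site $k$, no divergence argument for the left series, and a single argument covering $k<0$ and $k=0$ at once; what the paper's route buys is that it does not rely on reversibility of the chain, only on the conductance comparisons, which is closer in spirit to the framework of \cite{CP} it extends.
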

We recall that $\pi^1 (k)=c_{k-1,k}+c_{k,k+1}$ for all $k \in \bbZ$.  Moreover,
we point out that the series in \eqref{def_g} is finite, since it can be  bounded by $  \sum _{j=0}^\infty
\exp\bigl\{-j (2\l d - (1-\l)\tfrac{x_{j+1}-x_j}{j}) \bigr\} $, while $(x_{j+1}-x_j)/j \to 0$ for $\bbP$--a.a.~$\o$ (see the argument in the proof of Lemma \ref{goldie}).

We remark that   estimate \eqref{sicilia27} is not uniform in the environment $\o$, and in general one cannot expect   a uniform bound. This technical fact represents a major difference with the setting of \cite{CP}, where the existence of an $\o$-independent upper bound of the expected number of visits is required (cf.~Condition D therein).
  
\begin{proof}Fix $k<0$.
Starting from $0$, the random variable $N_\infty^\rho(k)$ is equal to
$$
N_\infty^\rho(k)=\begin{cases} 0 & \mbox{with probability } 1-P^{\o,\rho}_0(\mbox{ $X_\cdot$  eventually reaches  $k$ }) \\
Y(k) & \mbox{with probability } P^{\o,\rho}_0(\mbox{ $X_\cdot$  eventually reaches  $k$ })
\end{cases}
$$
where $Y(k)$ is a geometric random variable whose parameter is the escape probability $p^{\rho} _{\rm esc}(k)$ from $k$ (recall Warning \ref{carnevale}).
Therefore
\begin{equation}\label{annamaria}
E^{\o,\rho}_0[N_\infty (k)]=
\frac{1}{p^{ \rho}_{\rm esc}(k)} P^{\o,\rho}_0(\mbox{ $X_\cdot$  eventually reaches  $k$ }).
\end{equation}
Let us start by giving an upper bound for the probability of reaching $k$ in finite time:
\begin{align}\label{eqn:reachx}
P^{\o,\rho}_0(\mbox{ $X_\cdot$  eventually reaches $k$ }) &\leq P^{\o,\rho}_0(\mbox{ $X_\cdot$  eventually reaches  $A:=(-\infty,k]$ }) \nonumber\\
	&=\lim_{N\to\infty}P^{\o,\rho}_0(H_{B_N}> H_A ),
\end{align}
where $B_N:=[N,\infty)$ and the $H$'s are the hitting times of the respective sets. By a well-known formula (see
\cite[Proof of Fact 2]{BGP})
\begin{equation}\label{eqn:taus1}
P^{\o,\rho}_0(H_{B_N}>H_A )\leq\frac{C^{ \rho}_{\rm eff}(0\leftrightarrow A) }
{C^{ \rho}_{\rm eff}(0\leftrightarrow A\cup B_N) }\,.
\end{equation}
%
Using now Proposition \ref{prop:effconductance_bound} we have that there exists a $K$ such that
\begin{align}\label{eqn:taus2}
P^{\o,\rho}_0(\mbox{ $X_\cdot$  eventually reaches $k$ })
	& \leq \lim_{N\to\infty}K\frac{C_{\rm eff}^{1}(0\leftrightarrow A) }
		{C_{\rm eff}^{1}(0\leftrightarrow A\cup B_N) }\nonumber\\
	&=K\frac{C_{\rm eff}^{1}(0\leftrightarrow A) }{C_{\rm eff}^{1}(0\leftrightarrow A\cup B_\infty) }.
\end{align}
where $C_{\rm eff}^{1}(0\leftrightarrow A\cup B_\infty):= \lim_{N \to \infty} C_{\rm eff}^{1}(0\leftrightarrow A\cup B_N)$.

\smallskip

Call $C_N:=(-\infty,-N+k]\cup[N+k,\infty)$. By Corollary \ref{cor:p_esc} and equation \eqref{fuga}, we know that
\begin{equation}\label{giovanni}
p^{\rho}_{\rm esc}(k)\geq  \frac 1K\, \lim_{N\to\infty}\frac{C_{\rm eff}^{1}(k\leftrightarrow C_N)}{\pi^1(k)}=\frac 1K\,\frac{C_{\rm eff}^{1}(k\leftrightarrow C_\infty)}{\pi^{1}(k)}\,,
\end{equation}
where $C_{\rm eff}^{1}(k\leftrightarrow C_\infty):= \lim _{N \to \infty} C_{\rm eff}^{1}(k\leftrightarrow C_N)$.

Since we have conductances in series, we can write 
\begin{equation}\label{decompongo}
	C_{\rm eff}^{1}(k\leftrightarrow C_\infty)=\Big(\displaystyle{\sum_{j=-\infty}^{k-1}}\tfrac{1}		{c_{j,j+1} }\Big)^{-1} +
		\Big(\displaystyle{\sum_{j=k}^{\infty}}\tfrac{1}{c_{j,j+1} }\Big)^{-1}\,.
\end{equation}
We claim that
\begin{equation}\label{renato_zero}
 \sum_{j=-\infty}^{k-1}\tfrac{1}{c_{j,j+1}} =+\infty\,, \qquad
\sum_{j=k}^{\infty}\tfrac{1}{c_{j,j+1} }<+\infty
 \qquad \text{$\bbP$--a.s.}
\end{equation}
Indeed, the first series diverges a.s.~since, for $j \leq -1$, $1/c_{j,j+1}  \geq  K e^{-\l(x_j+x_{j+1} ) +(x_{j+1}-x_j)} \geq K$ (note that $x_j, x_{j+1}\leq 0$). The second series is finite a.s.~due to Lemma \ref{goldie}.

\smallskip
Due to \eqref{annamaria}, \eqref{eqn:taus2}, \eqref{giovanni}, \eqref{decompongo} and  \eqref{renato_zero} we can write
\begin{align}
E^{\o,\rho}_0[N_\infty (k)]
	&\leq \bar K\,\frac{\pi^{1}(k)}{C_{\rm eff}^{1}(k\leftrightarrow C_\infty)}
		\cdot\frac{C_{\rm eff}^{1}(0\leftrightarrow A) }{C_{\rm eff}^{1}(0\leftrightarrow A\cup B_\infty) }\nonumber\\
	&=\bar K \,\frac{\pi^1(k) }
		{\Big(\displaystyle{\sum_{j=-\infty}^{k-1}}\tfrac{1}{c_{j,j+1}}\Big)^{-1} +
		\Big(\displaystyle{\sum_{j=k}^{\infty}}\tfrac{1}{c_{j,j+1}}\Big)^{-1}}
		\cdot\frac{\Big(\displaystyle{\sum_{j=k}^{-1}}\tfrac{1}{c_{j,j+1}}\Big)^{-1}}
		{\Big(\displaystyle{\sum_{j=k}^{-1}}\tfrac{1}{c_{j,j+1}}\Big)^{-1} +
		\Big(\displaystyle{\sum_{j=0}^{\infty}}\tfrac{1}{c_{j,j+1}}\Big)^{-1}}\nonumber\\
	&=\bar K\, \pi^1(k)
		\Big(\displaystyle{\sum_{j=0}^{\infty}}\tfrac{1}{c_{j,j+1}}\Big) \leq  K_0  \pi^1(k) \,\sum_{j=0}^{\infty} {\rm e}^{-\l(x_j+x_{j+1})+(x_{j+1}-x_j)}  \leq  g_\o  (|k|)\,.	
\label{riposino}
\end{align}

We now consider the case $k=0$. By \eqref{annamaria}, \eqref{giovanni} and \eqref{renato_zero}
 we have
\begin{equation*}
\begin{split}
 E^{\o,\rho}_0[N_\infty (0)]& =
\frac{1}{p^{  \rho}_{\rm esc}(0)}\leq K  \frac{\pi^{1}(0)}{C_{\rm eff}^{1}(0\leftrightarrow C_\infty)}
	 = K \pi^{1}(0) \displaystyle{\sum_{j=0}^{\infty}}\tfrac{1}{c_{j,j+1} }
\end{split}
\end{equation*}
and we can conclude as in \eqref{riposino}.
\end{proof}

We now collect some properties of the function $g_\o$:
\begin{Lemma}\label{piccolino} 
There exist constants $K_*>0$ which do not depend on $\rho, \o$, such that 
\begin{align}
& \pi^1 (k) \leq K_* e^{2 \l d k}  \,, \qquad k \leq 0\,, \label{spring0}\\
 &  \bbE[ g_\o (k) ] \leq K_*
 \frac{{\rm e}^{-2\l dk}}{1-e^{-2\l d}}\bbE\bigl[ e^{(1-\l)x_1}\bigr]\,, \qquad k \geq 0\,,\label{spring1}\\
 &  g_\o(k) \geq g_{\t_\ell \o} ( k + \ell)\,, \qquad k, \ell  \geq 0\,,\label{spring2}\\
 &  \bbE E_k^{\o, \rho }[N_\infty (\Z_-)] \leq
K_* \left( \frac{1 }{(1-e^{-2\l d})^2}+ \frac{|k|}{1-e^{-2\l d}}   \right) \bbE\bigl[ e^{(1-\l)x_1}\bigr]    \,, \qquad k \leq 0 \,. \label{spring3}
\end{align}
\end{Lemma}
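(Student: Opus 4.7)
\medskip\noindent\textbf{Proof plan.} Parts (1)--(3) are essentially direct computations. For (1), since for $k\leq 0$ the three sites indexed by $k-1,k,k+1$ all lie in $(-\infty,0]$ with $x_k\leq -|k|d$ by (A4), plugging these into \eqref{defofcond} bounds each of $c_{k-1,k}$ and $c_{k,k+1}$ by a constant (depending on $u_{\max}$) times $e^{-2\lambda d|k|}$; the trivial inequality $1+\lambda\geq 2\lambda$ handles the $k=-1$ and $k=0$ edge terms. For (2), combining (1) in the form $\pi^1(-k)\leq K_* e^{-2\lambda dk}$ with the a.s.\ bound $x_j\geq jd$ gives $e^{-2\lambda x_j+(1-\lambda)Z_j}\leq e^{-2\lambda jd}e^{(1-\lambda)Z_j}$; taking expectation termwise and using stationarity to rewrite $\bbE[e^{(1-\lambda)Z_j}]=\bbE[e^{(1-\lambda)x_1}]$, then summing the geometric series, yields the claim. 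For (3), the identity $c_{i,j}(\tau_\ell\omega)=e^{-2\lambda x_\ell(\omega)}c_{i+\ell,j+\ell}(\omega)$ (a direct consequence of $x_\cdot(\tau_\ell\omega)=x_{\cdot+\ell}-x_\ell$) gives $g_{\tau_\ell\omega}(k+\ell)=K_0\pi^1(-k)\sum_{m\geq\ell}e^{-2\lambda x_m+(1-\lambda)Z_m}$: the two factors $e^{\mp 2\lambda x_\ell}$ arising in $\pi^1$ and in the transformed series cancel exactly, leaving a tail of the series defining $g_\omega(k)$, which is pointwise smaller.

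For (4), the first move is stationarity of $\bbP$: $\bbE[E_k^{\omega,\rho}[N_\infty(j)]]=\bbE[E_0^{\omega,\rho}[N_\infty(j-k)]]$, so that the quantity of interest equals $\sum_{m\leq|k|}\bbE[E_0^{\omega,\rho}[N_\infty(m)]]$. This splits naturally into negative targets $m\leq 0$ and positive targets $1\leq m\leq |k|$. The negative part is immediate from Proposition~\ref{prop:function_g} and part (2): $\sum_{m\leq 0}\bbE[E_0^{\omega,\rho}[N_\infty(m)]]\leq\sum_{n\geq 0}\bbE[g_\omega(n)]$, and the geometric sum yields the $1/(1-e^{-2\lambda d})^2$ contribution.

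For the positive targets the plan is to use the strong Markov decomposition $E_0^{\omega,\rho}[N_\infty(m)]=P_0^{\omega,\rho}(T_m<\infty)/p_{\rm esc}^\rho(m)\leq 1/p_{\rm esc}^\rho(m)$ and to bound the escape probability via Corollary~\ref{cor:p_esc}, \eqref{fuga}, and the nearest-neighbor series decomposition \eqref{decompongo}. For any $m\in\Z$ the left tail $\sum_{j<m}1/c_{j,j+1}$ diverges a.s.\ by the same argument as in \eqref{renato_zero} (the terms with index $\leq -1$ are bounded below by a positive constant), so its reciprocal vanishes and $1/p_{\rm esc}^\rho(m)\leq K\pi^1(m)\sum_{j\geq m}1/c_{j,j+1}$. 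The key algebraic step is then the pointwise bounds $\pi^1(m)\leq Ce^{2\lambda x_m}$ and $1/c_{j,j+1}\leq Ce^{-2\lambda x_j+(1-\lambda)Z_j}$, both immediate from \eqref{defofcond} and the boundedness of $u$: after multiplication, $e^{2\lambda x_m}\cdot e^{-2\lambda x_j}=e^{-2\lambda(x_j-x_m)}\leq e^{-2\lambda(j-m)d}$ becomes deterministic by (A4) and decouples from $e^{(1-\lambda)Z_j}$. Stationarity reduces $\bbE[e^{(1-\lambda)Z_j}]$ to $\bbE[e^{(1-\lambda)x_1}]$, and summing the geometric series gives the uniform-in-$m\geq 1$ bound $\bbE[E_0^{\omega,\rho}[N_\infty(m)]]\leq K\,\bbE[e^{(1-\lambda)x_1}]/(1-e^{-2\lambda d})$. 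Summing over $m=1,\dots,|k|$ produces the linear-in-$|k|$ piece.

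The main obstacle is precisely this positive-target estimate in (4): a naive reversibility argument would introduce the random ratio $\pi^\rho(m)/\pi^\rho(0)$, which is not controllable uniformly in $\omega$. Instead, one must exploit the explicit exponential form of the conductances so that the random factors $e^{\pm 2\lambda x_\cdot}$ telescope into a deterministic geometric decay via $x_j-x_m\geq (j-m)d$, with the $Z_j$ factor surviving only inside the exponential moment $\bbE[e^{(1-\lambda)x_1}]$.
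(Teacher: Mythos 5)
Your proposal is correct and, in all four parts, follows essentially the same route as the paper: the pointwise conductance bounds $x_i\leq id$ for $i\leq 0$ give \eqref{spring0}, the deterministic bounds plus stationarity and a geometric series give \eqref{spring1}, the exact cancellation of the factors $e^{\mp 2\l x_\ell}$ under shift gives \eqref{spring2}, and \eqref{spring3} is obtained by splitting the targets and summing. The only (cosmetic) deviation is in the intermediate targets of \eqref{spring3}: the paper bounds $E_k^{\o,\rho}[N_\infty(z)]\leq E_z^{\o,\rho}[N_\infty(z)]\leq g_{\t_z\o}(0)$ and then applies stationarity and \eqref{spring1} with argument $0$, whereas you re-derive the same estimate directly from \eqref{fuga}, Corollary \ref{cor:p_esc} and \eqref{decompongo}, bounding $\pi^1(m)\sum_{j\geq m}1/c_{j,j+1}$ in expectation via the telescoping exponent $e^{-2\l(x_j-x_m)}\leq e^{-2\l (j-m)d}$ — the two computations coincide term by term.
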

Trivially, the first  and fourth  estimates are effective when $\bbE\bigl[ e^{(1-\l)x_1}\bigr]< \infty$.
\begin{proof} We first prove \eqref{spring0}. Recall $\pi^1(k)= c_{k-1,k}+c_{k,k+1}$.
 Given $i \leq 0 $ we have $x_i \leq i d$, implying 
{$c_{i-1,i} \leq {\rm e}^{u_{\rm max} } {\rm e}^{ \l (x_{i-1}+x_i) - (x_i-x_{i-1} ) }
 	\leq K e^{ 2 \l d i}
$.  }
By the same argument, for  $i <  0$ one gets $c_{i,i+1} \leq K e^{2 \l d i}$ and, for $i=0$,
  ${c_{0,1} = {\rm e} ^{ \l x_1 -x_1 +u(E_0, E_1)}\leq K}$.
\smallskip 

\eqref{spring1} is obtained noting that, by \eqref{spring0},   $\bbE[ g_\o (k) ] \leq K_*  e^{-2 \l d k } \sum_{j=0}^\infty e^{- 2 \l j d} \bbE\bigl[ e^{(1-\l)x_1}\bigr]$.


 \smallskip

To  get \eqref{spring2} we first observe that $x_{i-\ell} ( \t_\ell \o) = x_i (\o) - x _\ell (\o) $ and $E_{i- \ell} ( \t_\ell \o)= E_i (\o) $ for all $i \in \bbZ$. As a consequence,
we get $\pi^1 ( -k-\ell)[\t_\ell \o]= {\rm e }^{ - 2 \l x_\ell  } \pi^1 (-k)$ (the r.h.s.~refers to the environment $\o$). Therefore,  using also that $x_j(\t_\ell \o)= x_{j+\ell }(\o)- x_\ell(\o)$
and that $x_{j+1}(\t_\ell \o)-x_j(\t_\ell \o) = x_{j+1+\ell }( \o)-x_{j+\ell}( \o)$,
 we have
\begin{equation}\label{aurora100}
\begin{split}
g_{\t_\ell \o} (k+\ell)& =   K_0 \pi^1 (-k)  {\rm e }^{ - 2 \l x_\ell (\o) }   \sum _{j=0}^\infty
{\rm e}^{-2\l x_j( \t_\ell \o) + (1-\l)(x_{j+1}(\t_\ell \o)-x_j(\t_\ell \o) )}\\
& = K_0 \pi^1 (-k)  \sum _{j=0}^\infty
{\rm e}^{-2\l x_{j+\ell} + (1-\l)(x_{j+1+\ell }-x_{j+\ell} )}  \leq g_\o ( k)\,,
\end{split}
\end{equation}
thus completing the proof of \eqref{spring2}.

Finally, for \eqref{spring3}, we write, thanks to Proposition \ref{prop:function_g},
\begin{align*}
\bbE E_k^{\o, \rho }[N_\infty (\Z_-)]
	&=\sum_{z\leq k }\bbE E_k^{\o, \rho }[N_\infty (z)]+\sum_{k <z\leq 0}\bbE E_k^{\o,\rho}[N_\infty(z)] \nonumber\\
	&\leq\sum_{z\leq k}\bbE E_k^{\o, \rho}[N_\infty (z)]+\sum_{k<z\leq 0}\bbE E_z^{\o, \rho }[N_\infty (z)] \qquad 		
		\mbox{(Markov Property)}\nonumber\\
	&\leq \sum_{i\geq 0}  \bbE[ g_{ \tau_k\o}(i )]+|k|\,\bbE[g_\o(0)]\,,
\end{align*}
and the claim then follows from \eqref{spring1}.
\end{proof}

\begin{Remark}\label{estensione}  In the spirit of Remark \ref{remark:nearest}, we point out 
that we could consider weaker conditions than  $\bbE\bigl[ e^{(1-\l) Z_0} \bigr] <+\infty$, at the cost of dealing with rather involved formulas. Take for simplicity $u \equiv 0$.
In our case, $\bbE\bigl[ e^{(1-\l) Z_0} \bigr] <+\infty$ guarantees, by Lemma \ref{piccolino}, that $\bbE[ g_\o (k) ] $ is finite and summable over $k \geq 0$. But what is actually required is that $ g_\o (k)$  bounds from above  the quantity $ \a_\o(k):=K \pi^1(-k) \sum _{j \geq 0} \frac{1}{c_{j,j+1} }$ (see the proof of Prop.~\ref{prop:function_g}). By stationarity,  one has 
$$
\bbE [ c_{k, k+1} / c_{ k+i, k+i+1} ] = \bbE [ e^{-(1+\l) Z_0 -2 \l ( Z_1+ \dots+ Z_{i-1}) +(1-\l) Z_i } ].
$$ 
This identity  allows to provide conditions  for  $\sum_{k \geq0} \bbE[
 \a_\o(k)]$ {to be finite}, which are  weaker than $\bbE\bigl[ e^{(1-\l) Z_0} \bigr] <+\infty$. One could go on in weakening conditions, also  inside Prop.~\ref{aereo},  and still get the ballisticity of  the Mott random walks $\bbY_t$ and $Y_n$.

\end{Remark}

\begin{Corollary}\label{cor:comparison_number_visits}
There exist constants $K_1,K_2>0$ which do not depend on $ \rho, \o $ such that
\begin{align}
& E^{\o,\rho}_0[N_\infty (k)] \leq K_1 \, E^{\o,1}_0[N_\infty (k)] \leq K_2\, g_\o(|k|) &\forall k \leq 0\,,\label{venezia1}\\
&  E^{\o,\rho}_0[N_\infty (k)] \leq K_1 \,E^{\o,1}_0[N_\infty (k)]  & \forall k> 0\,.\label{venezia2}
\end{align}
\end{Corollary}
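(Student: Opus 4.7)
The plan is to combine the identity \eqref{annamaria},
\begin{equation*}
E^{\o,\rho}_0[N_\infty(k)] = \frac{P^{\o,\rho}_0(H_k < \infty)}{p^\rho_{\rm esc}(k)},
\end{equation*}
with the escape-probability comparison of Corollary \ref{cor:p_esc} and a parallel comparison of the hitting probabilities $P^{\o,\rho}_0(H_k < \infty)$ with $P^{\o,1}_0(H_k < \infty)$. Corollary \ref{cor:p_esc} already supplies $1/p^\rho_{\rm esc}(k) \leq (1/K_1)\, 1/p^1_{\rm esc}(k)$, so only the hitting probabilities need to be compared.

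For the hitting comparison, I would set $A_k := (-\infty,k]$ when $k \leq 0$ and $A_k := [k,+\infty)$ when $k > 0$, and take $B_N$ to be the opposite far half-line. The argument rests on two observations. First, for the nearest-neighbor chain started at $0$ the entrance time of $A_k$ coincides with $H_k$, while for the $\rho$-chain one only has $P^{\o,\rho}_0(H_k < \infty) \leq P^{\o,\rho}_0(H_{A_k} < \infty)$, because long jumps may overshoot $k$. Second, the conductance estimate \eqref{eqn:taus1} (already used in the proof of Proposition \ref{prop:function_g}), combined with Proposition \ref{prop:effconductance_bound} and the fact that Lemma \ref{lemma:effconductance_nn} turns \eqref{eqn:taus1} into an equality for the nearest-neighbor chain, yields
\begin{equation*}
P^{\o,\rho}_0(H_{A_k} < H_{B_N}) \leq K\,\frac{C^{1}_{\rm eff}(0 \leftrightarrow A_k)}{C^{1}_{\rm eff}(0 \leftrightarrow A_k \cup B_N)} = K\,P^{\o,1}_0(H_{A_k} < H_{B_N}).
\end{equation*}
Letting $N \to \infty$ and applying the first observation gives $P^{\o,\rho}_0(H_k < \infty) \leq K\, P^{\o,1}_0(H_k < \infty)$.

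Inserting these two bounds into \eqref{annamaria} produces the first inequality in both \eqref{venezia1} and \eqref{venezia2}. The second inequality in \eqref{venezia1} is then precisely Proposition \ref{prop:function_g} specialized to $\rho = 1$, since $g_\o$ does not depend on $\rho$.

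The only mildly delicate point is the sign-dependent choice of $A_k$: one must treat $k > 0$ and $k \leq 0$ separately so that in each case the hitting of the single point $k$ is dominated by the hitting of a half-line through which the nearest-neighbor walk must necessarily pass. Beyond that, the argument is a straightforward assembly of the effective-conductance comparison (Proposition \ref{prop:effconductance_bound}) and the escape-probability comparison (Corollary \ref{cor:p_esc}) already established.
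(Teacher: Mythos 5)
Your proof is correct and follows essentially the same route as the paper: the identity \eqref{annamaria}, the effective-conductance comparison of Proposition \ref{prop:effconductance_bound} combined with Lemma \ref{lemma:effconductance_nn} to compare the hitting probabilities, Corollary \ref{cor:p_esc} for the escape probabilities, and Proposition \ref{prop:function_g} (with $\rho=1$) for the second inequality in \eqref{venezia1}. The only small deviation is the case $k>0$, where the paper shortcuts by noting that $P^{\o,1}_0(\mbox{$X_\cdot$ eventually reaches }k)=1$ (Lemma \ref{sicilia}) and simply bounds the $\rho$-walk's hitting probability by one, while you mirror the conductance argument with $A_k=[k,+\infty)$ --- both are valid.
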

\begin{proof}  First we consider \eqref{venezia1}. Its second inequality  is a restatement of Prop.~\ref{prop:function_g}.
For the first inequality we distinguish the cases $k<0$ and $k=0$.
When $k<0$  note that  \eqref{eqn:taus2} and  Lemma \ref{lemma:effconductance_nn} imply that \begin{align}\label{eq:comparison_reach_k}
P^{\o,\rho}_0(\mbox{  $X_\cdot$  eventually reaches  $k$ })
	& \leq  K\,P^{\o,1}_0(\mbox{ $X_\cdot$ eventually reaches $k$ })\,.
\end{align}
Then
 put together equation \eqref{annamaria} (and its analogous version for $\rho=1$), equation \eqref{eq:comparison_reach_k} and  Corollary \ref{cor:p_esc}.
For $k=0$ use that $  E^{\o,\rho}_0[N_\infty (0)] =
\frac{1}{p^{ \rho}_{\rm esc}(0)}$ (also in the  case $\rho=1$) and use  Corollary \ref{cor:p_esc}.

Let us now consider equation \eqref{venezia2}.  Start with \eqref{annamaria}.
 Due to Corollary \ref{cor:p_esc}
 and the fact that $P^{\o,1}_0(\mbox{ $X_\cdot$  eventually reaches  $k$ })=1$ for each $k>0$  (cf.~Lemma \ref{sicilia} below)
 it is simple to conclude.
\end{proof}


\subsection{Probability to hit a site on the right}\label{subsec:hit}
Following \cite{CP}, given $k,z \in \bbZ$,  we set
\[ T_z^\rho:=\inf\{n\geq 0:\,X_n^\rho\geq z\}\,, \qquad  T^\rho:= T^\rho_0\,,\qquad  r^\rho_k(z):=P^{\o,\rho}_k(X_{T_z}=z)\,.\]
Note that the dependence of $\o$ has been omitted. Again (see Warnings \ref{attenzione} and  \ref{att1}),  we simply write   $T_z$, $r_k(z)$ inside
  $P^{\o,\rho}_k(\cdot )$, $E^{\o,\rho}_k(\cdot )$.

\begin{Lemma}\label{sicilia}
For $\bbP$--a.a.~$\o$ and for each $\rho \in \N_+ \cup \{\infty\}$  it holds 
that
$$ P^{\o, \rho}_k ( T_z  < \infty)=1  \qquad \forall k < z  \text{ in } \bbZ\,.
$$
\end{Lemma}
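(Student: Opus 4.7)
The plan is an electrical-network argument showing that the random walk cannot be confined to a half-line $(-\infty,z-1]$ forever, ultimately because the series of nearest-neighbour resistances to the left diverges almost surely (cf.~\eqref{renato_zero}).

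Fix $k<z$ in $\bbZ$ and $\rho\in\N_+\cup\{\infty\}$. Set $A:=[z,\infty)\cap\bbZ$ and $B_N:=(-\infty,k-N]\cap\bbZ$ for $N\geq 1$. First I would invoke the symmetric counterpart of \eqref{eqn:taus1} (same reference, \cite[Proof of Fact 2]{BGP}) to get
\[
P^{\o,\rho}_k\bigl(H_{B_N}<H_A\bigr)\leq\frac{C^{\rho}_{\rm eff}(k\leftrightarrow B_N)}{C^{\rho}_{\rm eff}(k\leftrightarrow A\cup B_N)}.
\]

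Next I would bound the numerator and denominator separately. Proposition \ref{prop:effconductance_bound} and the series-resistance formula give $C^{\rho}_{\rm eff}(k\leftrightarrow B_N)\leq K\bigl(\sum_{j=k-N}^{k-1}1/c_{j,j+1}\bigr)^{-1}$, and the shift-invariant form of the divergence in \eqref{renato_zero} forces this to vanish as $N\to\infty$. Rayleigh's monotonicity, Proposition \ref{prop:effconductance_bound} and the series-resistance formula give
\[
C^{\rho}_{\rm eff}(k\leftrightarrow A\cup B_N)\geq C^{\rho}_{\rm eff}(k\leftrightarrow A)\geq C^{1}_{\rm eff}(k\leftrightarrow A)=\Bigl(\sum_{j=k}^{z-1}1/c_{j,j+1}\Bigr)^{-1}>0\qquad\bbP\text{-a.s.,}
\]
a bound uniform in $N$. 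Hence $P^{\o,\rho}_k(H_{B_N}<H_A)\to 0$ as $N\to\infty$.

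To conclude $P^{\o,\rho}_k(T_z<\infty)=1$, I would decompose
\[
\{T_z=\infty\}=\{T_z=\infty,\,H_{B_N}<\infty\}\cup\{T_z=\infty,\,H_{B_N}=\infty\}.
\]
The first event sits inside $\{H_{B_N}<H_A\}$, whose probability goes to $0$; the second forces the chain to stay forever in the finite window $\{k-N+1,\ldots,z-1\}$. Since $c_{i,i\pm 1}>0$ for every $i$, the nearest-neighbour transition probabilities are strictly positive, so the chain exits any finite interval a.s., and this event has probability $0$ for every $N$. Letting $N\to\infty$ yields the claim, and a single $\bbP$-null exceptional set works for the countable collection of triples $(k,z,\rho)$.

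The main technical step is establishing the effective-conductance inequality in the first display; everything else is bookkeeping built on Proposition \ref{prop:effconductance_bound}, Lemma \ref{lemma:effconductance_nn} and the divergence already recorded in \eqref{renato_zero}.
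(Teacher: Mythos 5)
Your proof is correct and follows essentially the same route as the paper: bound $P^{\o,\rho}_k(\text{hit far left before }[z,\infty))$ by the ratio of effective conductances, reduce to nearest-neighbour conductances via Proposition \ref{prop:effconductance_bound}, and use the divergence of the left resistance series as in \eqref{renato_zero} together with the finite (hence positive-conductance) resistance to the right. The only difference is cosmetic: you make explicit the routine step that the walk cannot stay forever in a finite window, which the paper's proof leaves implicit in the identity $P^{\o,\rho}_k(T_0=\infty)=\lim_N P^{\o,\rho}_k(\t_{C_N}<\t_D)$.
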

\begin{proof}
Without loss of generality we take $k<0=: z$ and  prove that $ P^{\o, \rho}_k ( T_0  = \infty)=0$.
As in \eqref{eqn:taus1}, setting  $C_N:= (-\infty, -N]$ and $D = [0,\infty)$,  we can bound
\[  P^{\o, \rho}_k ( T_0  = \infty) = \lim_{N \to \infty} \P^{\o, \rho}_k ( \t_{C_N} < \t _D) \leq \liminf _{N \to \infty} \frac{ C^{\rho}_{\rm eff} (k \leftrightarrow C_N) }{  C^{ \rho}_{\rm eff} (k \leftrightarrow C_N \cup D)} 
\,.
\]
We observe that $ C^{ 1}_{\rm eff} (k \leftrightarrow C_N \cup D)=  C^{ 1}_{\rm eff} (k \leftrightarrow C_N )+
C^{ 1}_{\rm eff} (k \leftrightarrow D)$, while (recall \eqref{renato_zero})
\[ 
\lim_{ N \to \infty}  C^{ 1}_{\rm eff} (k \leftrightarrow C_N ) =  \Big( \sum _{j=-\infty }^{k-1} \frac{1}{c_{j,j+1}} \Big)^{-1}=0\,, \qquad   C^{ 1}_{\rm eff} (k \leftrightarrow D) =\Big( \sum _{j=k}^{-1} \frac{1}{c_{j,j+1} } \Big)^{-1}> 0  \,.
\] 
Together with Proposition \ref{prop:effconductance_bound}, this allows to conclude that $P^{\o, \rho}_k ( T_0  = \infty) = 0$.
\end{proof}

Our next result,  Lemma \ref{lemma:epsilon}, is the analog of Lemma 3.1 in \cite{CP}. Our proof follows a different strategy in order to avoid to deal with Conditions  D, E  of \cite{CP},  which are not satisfied in our context.

\begin{Lemma}\label{lemma:epsilon}
There exists $\varepsilon>0$ which does not depend on $\rho, \o$ such that, $\bbP$--a.s., $ r^\rho_k(0)\geq 2\varepsilon$
for all $k< 0$ and for all $\rho\in \N_+ \cup \{\infty\}$.
\end{Lemma}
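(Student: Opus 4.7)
The plan is to decompose the event $\{X_{T_0}=0\}$ according to the last site visited in $\Z_-$ before absorption into $[0,\infty)$, and then compare it with the unconstrained event $\{T_0<\infty\}$ using a uniform exponential bound on the conductances $c_{j,y}$ for $j\leq -1$ and $y\geq 0$.

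First I would write, for each $j\leq -1$,
\[
\lambda(j):=E_k^{\o,\rho}\Big[\sum_{n=0}^{T_0-1}\mathds{1}(X_n=j)\Big],
\]
which is finite for each $j$ because every visit to $j$ has probability at least $c_{j,0}/\pi^\infty(j)>0$ of stepping to $0$ and terminating the excursion in $\Z_-$. Applying the strong Markov property at each visit to $j$ and then summing over $j$ yields
\[
r_k^\rho(0)=\sum_{\,-\rho\leq j\leq -1}\lambda(j)\frac{c_{j,0}}{\pi^\infty(j)},
\qquad
1=P_k^{\o,\rho}(T_0<\infty)=\sum_{\,-\rho\leq j\leq -1}\lambda(j)\frac{1}{\pi^\infty(j)}\sum_{y=0}^{j+\rho}c_{j,y},
\]
where the restriction $|j|\leq\rho$ comes automatically since $0\in\{y\geq 0:|j-y|\leq\rho\}$ exactly when $|j|\leq\rho$, and the last equality uses Lemma \ref{sicilia}. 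Both formulas make sense termwise even without absolute summability since all quantities are nonnegative.

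The core estimate is a uniform upper bound on $\sum_{y\geq 0}c_{j,y}$ in terms of $c_{j,0}$. For $j\leq -1$ and $y\geq 0$ one has $x_y-x_j>0$, and a direct computation from \eqref{defofcond} gives
\[
\frac{c_{j,y}}{c_{j,0}}=e^{-(1-\l)x_y+u(E_j,E_y)-u(E_j,E_0)}\leq e^{u_{\max}-u_{\min}}\,e^{-(1-\l)d\,y},
\]
where the inequality uses $x_y\geq d\,y$ (Assumption (A4)). Summing over $y\geq 0$ yields $\sum_{y\geq 0}c_{j,y}\leq C\,c_{j,0}$ with $C:=e^{u_{\max}-u_{\min}}/(1-e^{-(1-\l)d})$, a constant depending only on $u,\l,d$ and in particular not on $\o,\rho,j,k$.

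Combining the two displays of the previous paragraph, for every $j$ the summand in the numerator is at least $1/C$ times the summand in the denominator, so
\[
r_k^\rho(0)\;\geq\;\frac{1}{C}\cdot 1\;=\;\frac{1}{C}.
\]
Setting $\varepsilon:=1/(2C)$ gives the lemma. No step looks particularly delicate: the only real obstacle is making sure one may rearrange sums even when $\sum_j\lambda(j)=E_k^{\o,\rho}[T_0]$ could be infinite, but since all terms are nonnegative this is automatic from Tonelli, and the ratio argument is termwise.
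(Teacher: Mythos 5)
Your proof is correct and follows essentially the same route as the paper: you decompose $\{X_{T_0}=0\}$ over the last negative site $j$ visited (your $\lambda(j)$ is just the paper's sum over $n$ of $P_k^{\o,\rho}(X_{n-1}=j,\,X_0,\dots,X_{n-2}<0)$), prove the same uniform bound $c_{j,0}\geq C^{-1}\sum_{y\geq 0}c_{j,y}$ from (A4) and the boundedness of $u$, and resum using $P_k^{\o,\rho}(T_0<\infty)=1$ from Lemma \ref{sicilia}, exactly as in the paper's argument.
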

\begin{proof}
We just make a pathwise analysis. By the Markov property we get
\begin{align}
r^\rho_k(0)
	&=  \sum_{-\rho \leq j<0}\sum_{n=1}^\infty P_k^{\o,\rho }(X_n=0,\,X_{n-1=j},\, X_0,X_1,...,X_{n-2}<0)\nonumber \\
	&= \sum_{-\rho \leq j<0}\sum_{n=1}^\infty P_j^{\o,\rho }(X_1=0)
		P_k^{\o,\rho }(X_{n-1=j},\, X_0,X_1,...,X_{n-2}<0). \label{roma}
\end{align}
We claim  that there exists $\e>0$  such that, for all $j$ and $\o$,
$$
P_j^{\o,\rho}(X_1=0)\geq  2 \e\, P_j^{\o,\rho }(X_1\geq 0).
$$
In fact, given $j$ with $-\rho \leq j <0$, we can write 
\begin{align*}
\frac{P_j^{\o,\rho }(X_1=0)}{P_j^{\o,\rho }(X_1\geq 0)}
	&\geq  \frac{c_{j,0}}{\sum_{l=0}^\infty c_{j,l}} \geq  K\,\frac{{\rm e}^{\l{x_j}+x_j}}{\sum_{l=0}^\infty {\rm e}^{\l(x_l+x_j)-(x_l-x_j)}  } =K\,\frac{1 }{\sum_{l=0}^\infty {\rm e}^{-(1-\l)x_l}  } \\&\geq K\,\frac{1}{\sum_{l=0}^\infty {\rm e}^{-(1-\l)d l}}=:2 \varepsilon.
\end{align*}
Coming back to \eqref{roma}, using   the Markov property and the fact that $T_0<\infty$ a.s., we get
\begin{align*}
r^\rho_k(0)
	&\geq 2 \e  \sum_{-\rho \leq j<0}\sum_{n=1}^\infty P_j^{\o,\rho }(X_1\geq 0)
		P_k^{\o,\rho }(X_{n-1=j},\, X_0,X_1,...,X_{n-2}<0)\nonumber \\
		&=   2\e\,\sum_{-\rho \leq j<0}\sum_{n=1}^\infty P_k^{\o,\rho }(X_n\geq 0,\,X_{n-1=j},\, X_0,X_1,...,X_{n-2}<0)
		 \nonumber\\&
		= 2\e \,P_k^{\o,\rho }(X_{T_0}\geq 0)= 2\e. \qedhere
\end{align*}
\end{proof}


\section{Regenerative structure for the $\rho$--truncated random walk with $\rho< \infty$ }\label{rigenero}
 In this section we take $\rho < \infty$.
We recall the regenerative  structure of  \cite{CP} 
for the $\rho$-truncated random walk with $\rho$ finite.

\begin{Warning}\label{olanda}
In order to avoid heavy notation, in this section $\rho$ is fixed once and for all in $\N_+$ and we  write $\bbP_x^\o$, $T_k$,  $r_k(z)$, $X_n$,...    instead of $\bbP_x^{ \o, \rho}$, $T_k^\rho$, $r_k^\rho(z)$, $X_n^\rho$,... The whole section refers to the $\rho$-truncated random walk. Only in Subsection \ref{veloce}, in which we collect the main conclusions, we will indicate $\rho$ explicitly according to the usual notation.
 \end{Warning}

Consider a sequence of i.i.d.~Bernoulli r.v.'s~$\zeta_1,\zeta_2,...$ with parameter $P(\zeta_1=1)=\varepsilon$ (the same $\varepsilon$ as in Lemma \ref{lemma:epsilon}) which does not depend on the environment $\o$.  $P$ and $E$ denote the probability law and the  expectation of the $\zeta$'s. 
 We couple the sequence $\zeta=(\zeta_1,\zeta_2,...)$ with the random walk $X_n $ in such a way that $\zeta_j=1$ implies $X_{T_{j\rho}}=j\rho$.

To this aim
we construct  the quenched measure $P^{\o,\zeta}_0$  of the random walk  starting at $0$ once both $\o$ and $\zeta$ are fixed. Recall Lemma \ref{lemma:epsilon}.  First, the law of $(X_n)_{n\leq T_\rho}$ is defined by
\begin{equation}
\mathds{1}_{\{\zeta_1=1\}}P^{\o }_0(\cdot|X _{T_\rho }=\rho)   +
\mathds{1}_{\{\zeta_1=0\}}\Big[\frac{r_0(\rho)-\e}{1-\e} P^{\o }_0(\cdot|X_{T_\rho}=\rho) +
\frac{1-r_0(\rho)}{1-\e} P^{\o }_0(\cdot|X_{T_\rho}>\rho)\Big].
\end{equation}
Then, given $j \geq 1$ and  $X_{T_{j\rho}}=y\in[j\rho,(j+1)\rho)$, the law of $(X_n )_{n\in[T_{j\rho}+1,T_{(j+1)\rho}]}$ is
\begin{equation}\label{elisa}
\begin{split}
&\mathds{1}_{\{\zeta_{j+1}=1\}} P^{\o}_y(\cdot|X_{T_{(j+1)\rho}}=(j+1)\rho)   \\
	\quad+&\mathds{1}_{\{\zeta_{j+1}=0\}}\Big[\frac{r _y((j+1)\rho)-\e}{1-\e} P^{\o}_y(\cdot|		X_{T_{(j+1)\rho}}=(j+1)\rho) \\
	&\quad\qquad\qquad+	\frac{1-r_y((j+1)\rho)}{1-\e} P^{\o}_y(\cdot|X_{T_{(j+1)\rho}}>(j			+1)\rho)\Big].
\end{split}
\end{equation}
One can check that, by   averaging $P_0^{\o,\z}$ over $\z$,  one obtains  the law   $P^{\o }_0$  of  the   original random walk $(X_n )_{n \geq0}$.

\medskip 
We introduce by iteration the sequence $(\ell_k)_{k \geq 0}$ as follows:
$$
\ell_0  := 0\,, \qquad \ell_{k+1}=\min\{j> \ell_k:\zeta_j=1\} \qquad k \geq 0\,.
$$
Note that
 by construction we have
$ X _{T_{\ell_k \rho}  } = \ell_k \rho$.


Given $k \geq 0$ let $\cC_k:=\bigl(\t _{X_ j}\o\,:\, T _{\ell_k \rho} \leq j <  T _{\ell_{k+1} \rho}\bigr)$.
As in \cite{CP} one can prove the following  result (cf. \cite[Lemma 3.2]{CP} and the corresponding  proof):
\begin{Lemma}\label{lemma:cycle_stationary}
 Let $\rho<\infty$.
Then the sequence of random pieces  $(\cC_k)_{k \geq 0}$ is stationary and ergodic under the measure $P \otimes \bbP \otimes P^{\o, \z }_0$. In particular, $\t_{\ell_k \rho}\o$ has the same law $\bbP$ as $\o$ for all $k=1,2,...$.
\end{Lemma}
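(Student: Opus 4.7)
The plan is to transcribe the cycle-stationarity argument of \cite[Lemma~3.2]{CP} to our setting, which works because the finite-range constraint $\rho<\infty$ together with the coupling ensures that at times $T_{\ell_k\rho}$ the walker sits exactly at site $\ell_k\rho$. First I would introduce, on the product space carrying $\omega$, $\zeta=(\zeta_1,\zeta_2,\dots)$ and the trajectory $(X_n)_{n\ge 0}$, the \emph{regeneration shift}
\[
\Theta\bigl(\omega,\zeta,(X_n)_{n\ge 0}\bigr):=\Bigl(\tau_{\ell_1\rho}\omega,\; (\zeta_{\ell_1+j})_{j\ge 1},\; (X_{T_{\ell_1\rho}+n}-\ell_1\rho)_{n\ge 0}\Bigr),
\]
and verify directly from the definitions that $\Theta$ sends $\cC_0$ to $\cC_1$ and, more generally, $\cC_k$ to $\cC_{k+1}$. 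Hence stationarity of $(\cC_k)_{k\ge 0}$ reduces to showing that $\Theta$ preserves the joint law $P\otimes\bbP\otimes P^{\omega,\zeta}_0$.

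To establish invariance under $\Theta$, I would decompose the joint law by conditioning on the event $\{\ell_1=m\}$ (which depends only on $\zeta$) and on the trajectory up to $T_{m\rho}$. Since $\zeta_{\ell_1}=1$, the coupling in \eqref{elisa} forces $X_{T_{m\rho}}=m\rho$, so the post-regeneration triple is built from (i) the shifted environment $\tau_{m\rho}\omega$, (ii) the shifted Bernoulli sequence $(\zeta_{m+j})_{j\ge 1}$, and (iii) a fresh trajectory generated from $0$ according to $P^{\tau_{m\rho}\omega,\zeta'}_0$ where $\zeta'=(\zeta_{m+j})_{j\ge 1}$. The strong Markov property at $T_{m\rho}$ combined with the explicit conditional form of $P^{\omega,\zeta}_0$ (notice that the conditional laws in \eqref{elisa} depend on $y$ and on $\omega$ only through $\tau_y\omega$) shows that (iii) has precisely the distribution $P^{\tau_{m\rho}\omega,\zeta'}_0$. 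The i.i.d.\ property of $\zeta$ makes $(\zeta_{m+j})_{j\ge 1}$ independent of $(\zeta_1,\dots,\zeta_m)$ with the original law $P$, while stationarity of $\bbP$ combined with independence of $\ell_1$ from $\omega$ gives that $\tau_{m\rho}\omega$ has law $\bbP$. Summing over $m$ yields $\Theta_*\bigl(P\otimes\bbP\otimes P^{\omega,\zeta}_0\bigr)=P\otimes\bbP\otimes P^{\omega,\zeta}_0$.

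For ergodicity, I would argue that any $\Theta$-invariant event is determined, up to null sets, by the tail of $\omega$ (via the shifts $\tau_{\ell_k\rho}$) together with tail events of the i.i.d.\ sequence $\zeta$. Ergodicity of $\bbP$ under $\tau$ (Assumption~(A1)) together with Kolmogorov's 0--1 law applied to $\zeta$ forces such events to be trivial. The final assertion that $\tau_{\ell_k\rho}\omega\sim\bbP$ is then immediate: it is the $\omega$-marginal of the $k$-th cycle, which by stationarity has the same distribution as the $\omega$-marginal of $\cC_0$, namely $\bbP$ (since $\ell_0=0$).

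The main obstacle is step two, i.e.\ checking carefully that the Radon--Nikodym pieces $\frac{r_y((j+1)\rho)-\varepsilon}{1-\varepsilon}$ and $\frac{1-r_y((j+1)\rho)}{1-\varepsilon}$ in \eqref{elisa} transform correctly under $\tau_{m\rho}$ so that the conditional law of the post-regeneration trajectory is genuinely the $P^{\tau_{m\rho}\omega,\zeta'}_0$ law of a freshly started walk. This bookkeeping is precisely the content of \cite[Chapter 8]{T} on cycle-stationarity, and once it is done the rest follows by standard Palm-calculus manipulations.
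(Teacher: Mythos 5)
Your stationarity argument is essentially the one the paper has in mind when it refers to \cite[Lemma 3.2]{CP} and to the cycle--stationarity framework of \cite[Chapter 8]{T}: decompose on $\{\ell_1=m\}$, use that $\zeta_m=1$ forces $X_{T_{m\rho}}=m\rho$, and combine the shift covariance of the quenched law with the i.i.d.\ structure of $\zeta$ and the stationarity of $\bbP$ to conclude that the regeneration shift $\Theta$ preserves $P\otimes\bbP\otimes P_0^{\o,\z}$. One small imprecision: the conditional laws in \eqref{elisa} depend on $(y,\o)$ not only through $\tau_y\o$ but also through the distance $(j+1)\rho-y$ to the next threshold; this is harmless only because at regeneration times $y$ is an exact multiple of $\rho$, so the block structure of the restarted walk is aligned with that of $P_0^{\tau_{m\rho}\o,\zeta'}$, and it is worth saying so explicitly.

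The ergodicity step, however, contains a genuine gap. First, the assertion that ``any $\Theta$-invariant event is determined, up to null sets, by the tail of $\o$ together with tail events of $\zeta$'' is not proved, and it is not the right reduction; the standard route is a conditioning/martingale argument: for a $\Theta$-invariant event $A$ one shows $E\bigl[\mathds{1}_A\mid \o,\zeta_1,\dots,\zeta_{\ell_k},(X_n)_{n\le T_{\ell_k\rho}}\bigr]=h(\tau_{\ell_k\rho}\o)$ for a fixed measurable $h$ (by the regeneration property), whence $\mathds{1}_A=\lim_k h(\tau_{\ell_k\rho}\o)$ a.s. Second, and more importantly, triviality does not follow from ``ergodicity of $\bbP$ under $\tau$ plus Kolmogorov's $0$--$1$ law'': along the regeneration structure the environment is only ever shifted by random multiples of $\rho$, and $\tau_1$-ergodicity of $\bbP$ does not by itself give triviality of events that are invariant only under such shifts. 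This is exactly the delicate point the paper treats with care in a parallel situation: Lemma \ref{pasquetta} in Appendix \ref{appendix:ergodic} requires the GCD of the step distribution to equal $1$, and the remark following it exhibits a counterexample otherwise; here the increments $\ell_{k+1}\rho-\ell_k\rho$ all lie in $\rho\bbN_+$, so that lemma cannot be invoked and a bare appeal to spatial ergodicity and a $0$--$1$ law does not close the argument. Your final paragraph therefore needs to be replaced by the actual ergodicity argument of \cite[Lemma 3.2]{CP} (which is what the paper itself cites in lieu of a proof), or by an argument exploiting more of each cycle than its starting environment; as written, the ergodicity claim is asserted rather than established.
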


As in \cite{T}, the fact that $(\cC_k)_{k \geq 1}$ is stationary and ergodic can be restated  as follows:
under  $P \otimes \bbP \otimes P^{\o, \z }_0$ the random path $(X _n)_{n \geq 0}$ with time points $0<T_{\ell_1 \rho } < T_{\ell_2 \rho} < \ldots $ is \emph{ cycle--stationary and ergodic}. This is the regenerative structure pointed out in \cite{CP}.

\medskip

In what follows, we will consider also the random walk $(X_n)_{n \geq 0}$  starting at $x$ and with law $P^{\o, \z}_x$.  This random walk is built as follows. Fix $a$ such that $x \in [a \rho, (a+1)\rho)$. Then, the law of $(X_n)_{n\leq T_{(a+1)\rho} }$ is defined by \eqref{elisa} with $j$ replaced by $a$ and $y$ replaced by $x$. Note that $T_{a \rho}=0$.  Given $j \geq a+1$ and $X_{T_{j\rho}}=y\in[j\rho,(j+1)\rho)$, the law of $(X_n )_{n\in[T_{j\rho}+1,T_{(j+1)\rho}]}$ is then given by \eqref{elisa}. Again, 
the average over $\z$ of $P^{\o, \z}_x$ gives $P^{\o}_x$.

\subsection{Estimates on the regeneration times}\label{subsec:regeneration}

As in \cite{CP} we set
$$ \bbP':= P\otimes \bbP \,, \qquad \bbE'[\cdot] = E\left[ \bbE[\cdot ] \right]\,.
$$
In what follows we assume that $\bbE\left[ e^{(1-\l) x_1} \right]< \infty$.
\begin{Lemma}\label{gnocchi} Let $\rho < \infty$.
There exist constants $K_1,K_2>0$ not depending on  $\o, \rho $ such that
\begin{align}
& E[E^{\o,\zeta }_0[T _{\ell_1\rho}]] \geq K_1 \rho \,,\label{basso}\\
& \bbE'[ E^{\o,\zeta }_0[T _{\ell_1\rho}]]\leq K_2 \Big(\frac 1{   (1-{\rm e}^{-2\l d})^2     } + \frac{\rho}{1-{\rm e}^{-2\l d}}\Big)\bbE\bigl[ e^{(1-\l)x_1}\bigr]\,.\label{eq:regeneration_bound}
\end{align}
\end{Lemma}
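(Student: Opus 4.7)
I will tackle the two bounds separately, using very different arguments.

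\textbf{Lower bound.} The observation $\ell_1\geq 1$ gives $T_{\ell_1\rho}\geq T_\rho$ pathwise, and since $T_\rho$ is a functional of the trajectory alone, averaging over $\zeta$ reduces the problem to proving $E_0^\omega[T_\rho]\geq\rho/\bar C$ for a universal constant $\bar C$. Lemma \ref{pierpaolo} implies that the expected step size is uniformly bounded, $E_i^{\omega,\rho}[|X_1-i|]\leq \bar C$, independent of $\omega,i,\rho$. Thus $M_n := X_n - \sum_{k=1}^n D_k$, with $D_k$ the local drift, is a martingale with $|D_k|\leq\bar C$. Optional stopping at $T_\rho\wedge N$ (using $X_{T_\rho\wedge N}\leq 2\rho$ to justify Fatou in $N\to\infty$) yields $E_0^\omega[X_{T_\rho}]\leq \bar C\,E_0^\omega[T_\rho]$. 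Since $X_{T_\rho}\geq\rho$, one gets $E_0^\omega[T_\rho]\geq\rho/\bar C$, which is \eqref{basso} with $K_1=1/\bar C$.

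\textbf{Upper bound.} I decompose $T_{\ell_1\rho}=\sum_{j=1}^{\ell_1}\tilde T_j$ with $\tilde T_j:=T_{j\rho}-T_{(j-1)\rho}$ and bound each leg uniformly. The core pointwise estimate is: for $y\in[(j-1)\rho,j\rho)$,
\[
E_y^\omega[T_{j\rho}] \;\leq\; \sum_{k\leq y}E_y^\omega[N_\infty(k)] + \sum_{k=y+1}^{j\rho-1}E_y^\omega[N_\infty(k)].
\]
The first sum I will control via Proposition \ref{prop:function_g} translated to starting point $y$, giving $\sum_{m\geq 0}g_{\tau_y\omega}(m)$; for the second, the strong Markov inequality $E_y^\omega[N_\infty(k)]\leq E_k^\omega[N_\infty(k)]=1/p^\rho_{\rm esc}(k)$ together with $1/p^\rho_{\rm esc}(k)\leq g_{\tau_k\omega}(0)$ (Proposition \ref{prop:function_g} with $k=0$ applied to $\tau_k\omega$) yields at most $\sum_{k=y+1}^{j\rho-1}g_{\tau_k\omega}(0)$. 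Taking $\bbE$ and invoking stationarity of $\bbP$ and Lemma \ref{piccolino} \eqref{spring1} then produces, uniformly in $y$ and $j$,
\[
\bbE\bigl[E_y^\omega[T_{j\rho}]\bigr] \;\leq\; K\Bigl(\tfrac{1}{(1-e^{-2\lambda d})^2}+\tfrac{\rho}{1-e^{-2\lambda d}}\Bigr)\bbE\bigl[e^{(1-\lambda)x_1}\bigr] \;=:\; M(\rho).
\]

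To pass from this to an estimate on $\bbE'[E_0^{\omega,\zeta}[T_{\ell_1\rho}]]$ I combine three ingredients: (i) the strong Markov property at $T_{(j-1)\rho}$; (ii) a Radon--Nikodym comparison between the coupled and the unconditioned leg laws, extracted from \eqref{elisa} and Lemma \ref{lemma:epsilon}, which yields a per-leg bound $\max\bigl(\tfrac{1}{1-\varepsilon},\tfrac{1}{2\varepsilon}\bigr)$; and (iii) the cycle-stationarity of Lemma \ref{lemma:cycle_stationary}, which identifies $\bbE'[T_{\ell_1\rho}]$ with an environment-averaged ``typical'' cycle time. Since a cycle contains on average $E[\ell_1]=1/\varepsilon$ mini-legs, and each mini-leg contributes at most $M(\rho)$ after taking $\bbE$, the expected first cycle time is at most of order $M(\rho)/\varepsilon$, giving \eqref{eq:regeneration_bound} after renaming $K_2$.

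\textbf{Main obstacle.} The delicate point is the interaction between the geometric tail $\bbP(\ell_1\geq j)=(1-\varepsilon)^{j-1}$ and the iterated Radon--Nikodym factors: a naive leg-by-leg iteration of (ii) produces $(1-\varepsilon)^{-(j-1)}$ factors that exactly cancel this decay and leave a divergent series. Avoiding this by appealing to the global cycle-stationary statement (iii), rather than conditioning leg-by-leg on $\zeta_1,\dots,\zeta_{j-1}$, is the crucial step.
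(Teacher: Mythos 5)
Your upper-bound argument has a genuine gap at the hand-off from the per-leg estimate to the cycle. What you actually prove is $\bbE\bigl[E^{\o}_y[T_{j\rho}]\bigr]\le M(\rho)$ for each \emph{fixed} $y$; but in the chain the $j$-th leg starts at the random landing site $y=X_{T_{(j-1)\rho}}$, whose conditional distribution $P^{\o,\zeta}_0(X_{T_{(j-1)\rho}}=y)$ is itself a function of $\o$. Hence in $\bbE\bigl[\sum_y P^{\o,\zeta}_0(X_{T_{(j-1)\rho}}=y)\,E^{\o}_y[T_{j\rho}]\bigr]$ you cannot pass the $\o$-expectation through the weights, and neither the strong Markov property nor Lemma \ref{lemma:cycle_stationary} decouples the landing distribution from the environment (cycle-stationarity only says that $\t_{\ell_k\rho}\o$ has law $\bbP$ at the regeneration levels; inside a cycle the legs still start at $\o$-correlated points). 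Your quenched bound $\sum_{m\ge0}g_{\t_y\o}(m)+\sum_{k=y+1}^{j\rho-1}g_{\t_k\o}(0)$ genuinely depends on $y$, so the sentence ``each mini-leg contributes at most $M(\rho)$ after taking $\bbE$'' is unjustified as written. The paper resolves exactly this point: it bounds $E^{\o}_y[T_{(j+1)\rho}]\le K\,E^{\o,1}_y[N_\infty((-\infty,(j+1)\rho])]\le K\,E^{\o,1}_{j\rho}[N_\infty((-\infty,(j+1)\rho])]$ (Corollary \ref{cor:comparison_number_visits} plus monotonicity in the starting point for the nearest-neighbour walk), i.e.\ by a quantity \emph{independent of the landing point} $y$, after which the $\zeta$-uniform weights sum to one and stationarity of $\bbP$ applies. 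A repair inside your scheme is possible (take the supremum over $y$ in the window, using \eqref{spring2} and paying an extra term of order $\rho$ with the same $\bbE\bigl[e^{(1-\l)x_1}\bigr]$ factor), but some such uniformisation must be supplied. Relatedly, your ``main obstacle'' is a red herring: the comparison \eqref{eq:get_rid_zeta} is applied once per leg, to the conditional law of that single leg given its starting point and $\zeta$, so no product of factors $(1-\e)^{-(j-1)}$ ever arises; one gets $\bbE E^{\o,\zeta}_0[T_{k\rho}]\le k\,\tfrac{1}{\e(1-\e)}M'(\rho)$ uniformly in $\zeta$ and then averages against $P(\ell_1=k)$, which converges because $E[\ell_1]=1/\e<\infty$. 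No appeal to cycle-stationarity is needed, nor would it help.

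For the lower bound your idea is the right one (Lemma \ref{pierpaolo} gives a uniform bound $\bar C$ on the expected step size, which is also how the paper proceeds, via stochastic domination of $X_k$ by a sum of i.i.d.\ variables with exponential tails), but the limiting step is stated backwards: Fatou together with the upper bound $X_{T_\rho\wedge N}\le 2\rho$ only yields $\limsup_N E^{\o}_0[X_{T_\rho\wedge N}]\le E^{\o}_0[X_{T_\rho}]$, whereas you need a \emph{lower} bound on $\liminf_N E^{\o}_0[X_{T_\rho\wedge N}]$, i.e.\ control of the negative part of $X_{T_\rho\wedge N}$. The standard fix: assume w.l.o.g.\ $E^{\o}_0[T_\rho]<\infty$ and run the same optional-stopping/Wald argument for $S_n:=\sum_{k\le n}|X_k-X_{k-1}|$, which is nondecreasing, satisfies $S_{T_\rho}\ge X_{T_\rho}\ge\rho$ and $E^{\o}_0[S_{T_\rho\wedge N}]\le\bar C\,E^{\o}_0[T_\rho]$, so monotone convergence gives $\rho\le\bar C\,E^{\o}_0[T_\rho]$, hence \eqref{basso}.
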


\begin{proof}  Take a sequence  $Y_1, Y_2, \dots $ of i.i.d.~positive random variables with $P( Y_i \geq s)= ( K e^{- d s(1-\l)} ) \wedge 1$ for $s \geq 1$ integer, $K$ being the constant appearing in Lemma \ref{pierpaolo}. Due to this lemma,  under $P^{\o}_0$,  $X_k$ is stochastically dominated by $Y_1+\dots+Y_k$ for any $k \geq0$. Hence, one can prove \eqref{basso}   as in the derivation of the first inequality of (19) in \cite{CP}.

\smallskip

 We concentrate on \eqref{eq:regeneration_bound}.
%
%
Exactly like on page 731, formulas (21) and (22) of \cite{CP}, we also have that for any $\zeta$ and for all $j\geq 0$
\begin{equation}\label{eq:get_rid_zeta}
	E^{\o,\zeta}_y[T_{(j+1)\rho}]\leq\frac{1}{\e(1-\e)} E^{\o}_y[T_{(j+1)\rho}]
\end{equation}
for all $y\in[j\rho,(j+1)\rho-1)$.

When $\ell_1=k$ we can write
$$
T_{\ell_1\rho}=T_\rho+(T_{2\rho}-T_{\rho})+...+(T_{k\rho}-T_{(k-1)\rho}).
$$
Now for each $j\geq 1$ we have
\begin{align}
 E^{\o,\zeta}_0[T_{(j+1)\rho} -T_{j\rho} ]
	& =\sum_{y\in[j\rho,(j+1)\rho)} E^{\o,\zeta}_y[T_{(j+1)\rho} ]\,
		P^{\o,\zeta}_0(X_{T_{j\rho} }=y)\nonumber\\
	& \leq \frac{1}{\varepsilon(1-\varepsilon)}\sum_{y\in[j\rho,(j+1)\rho)}E^{\o}_y[T_{(j+1)\rho}]\,
	P^{\o,\zeta}_0(X_{T_{j\rho}}=y)\nonumber
\end{align}
where we have used \eqref{eq:get_rid_zeta}. Now we see that, for any $y\in[j\rho,(j+1)\rho)$,
\begin{align}
E^{\o}_y[T_{(j+1)\rho}]
	&\leq E^{\o}_y[N_\infty((-\infty,(j+1)\rho])] \nonumber\\
	&\leq K\,E^{\o,1}_y[N_\infty((-\infty,(j+1)\rho])] \nonumber\\
	&\leq K\,E^{\o,1}_{j\rho}[N_\infty((-\infty,(j+1)\rho])] \nonumber,
\end{align}
where the second inequality is due to Corollary \ref{cor:comparison_number_visits}.

Hence
\begin{align}
E^{\o,\zeta}_0[T_{(j+1)\rho}-T_{j\rho}]
	&\leq K\, \frac{1}{\varepsilon(1-\varepsilon)} E^{\o,1}_{j\rho}[N_\infty((-\infty,(j+1)\rho])]\nonumber\\
	&\leq K\,\frac{1}{\varepsilon(1-\varepsilon)}\Big(\sum_{z\leq j\rho} E^{\o,1}_{j\rho}[N_\infty(z)]
		+ \sum_{j\rho<z\leq (j+1)\rho} E^{\o,1}_{j\rho}[N_\infty(z)]\Big)\nonumber\\
	&=K\, \frac{1}{\varepsilon(1-\varepsilon)}\Big(\sum_{z\leq j\rho} E^{\o,1}_{j\rho}[N_\infty(z)]
		+ \sum_{j\rho<z\leq (j+1)\rho} E^{\o,1}_{z}[N_\infty(z)]\Big)\nonumber
\end{align}

Using the results of Lemma \ref{piccolino} and Corollary \ref{cor:comparison_number_visits} we obtain for every $j\geq 1$
\begin{align}
\bbE E^{\o,\zeta}_0[T_{(j+1)\rho}-T_{j\rho}]
	&\leq K\,\frac{1}{\varepsilon(1-\varepsilon)}\Big(\sum_{k\geq 0}\bbE[g_{\tau_{j\rho}\o}(k)]
		+ \sum_{j\rho<z\leq (j+1)\rho}\bbE[g_{\tau_{z}\o}(0)]\Big)\nonumber\\
	&\leq  K\,\frac{1}{\varepsilon(1-\varepsilon)} \Big(\frac 1{   (1-{\rm e}^{-2\l d})^2     }
		+ \frac{\rho}{1-{\rm e}^{-2\l d}}\Big)\bbE\bigl[ e^{(1-\l)x_1}\bigr]\nonumber
\end{align}
and hence
$$
\bbE E^{\o,\zeta}_0[T_{k\rho}]
	\leq  K\,\frac{k}{\varepsilon(1-\varepsilon)}
	\Big(\frac 1{   (1-{\rm e}^{-2\l d})^2     }
		+ \frac{\rho}{1-{\rm e}^{-2\l d}}\Big)\bbE\bigl[ e^{(1-\l)x_1}\bigr]\,.
$$
Since $P (\ell_1=k)=\varepsilon(1-\varepsilon)^{k-1}$, we obtain
\begin{align}
\bbE'\big [E^{\o,\zeta}_0[T_{\ell_1\rho}]\big]
	&\leq K\,	\Big(\frac 1{   (1-{\rm e}^{-2\l d})^2 } + \frac{\rho}{1-{\rm e}^{-2\l d}}\Big)\bbE\bigl[ e^{(1-\l)x_1}\bigr]
		\sum_{k=1}^\infty  k\,(1-\varepsilon)^{k-2} \nonumber\\
	&=	\bar K\,	\Big(\frac 1{   (1-{\rm e}^{-2\l d})^2 } + \frac{\rho}{1-{\rm e}^{-2\l d}}\Big) \bbE\bigl[ e^{(1-\l)x_1}\bigr]\,.
\end{align}
\end{proof}

Recall the definition of the function $g_\o$ given in Prop.~\ref{prop:function_g}.

\begin{Lemma}\label{carnevale1} Let $\rho< \infty$.
Given $k \leq 0$ it holds
\begin{equation}\label{frappe1}
E^{\o, \z}_0 \left[N_\infty (k)\right ] \leq \frac{1}{\e(1-\e)} \sum_{j=0}^\infty g_{\t_{j \rho} \o} ( |k|+j\rho)\,.
\end{equation}
\end{Lemma}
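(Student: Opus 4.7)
The plan is to split $N_\infty(k)$ along the hitting times $T_{j\rho}$ of the successive half-lines $[j\rho,\infty)$. Lemma \ref{sicilia} ensures $T_{j\rho}<\infty$ $P^{\o,\z}_0$-a.s.\ for every $j\geq 0$, so the intervals $[T_{j\rho},T_{(j+1)\rho})$ form a partition of $\N$ and
\begin{equation*}
N_\infty(k)\;=\;\sum_{j=0}^{\infty}\sum_{n=T_{j\rho}}^{T_{(j+1)\rho}-1}\mathds{1}\{X_n=k\}\,.
\end{equation*}
It therefore suffices to bound the $j$-th summand and add the bounds up.

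Next, I fix $j\geq 0$ and condition on $\cF_{T_{j\rho}}$ together with $\z$. By construction \eqref{elisa}, the restarted walk $(X_{T_{j\rho}+n})_{n\geq 0}$ has law $P^{\o,\z^{(j)}}_y$ with $y:=X_{T_{j\rho}}\in[j\rho,(j+1)\rho)$ and shifted coupling $\z^{(j)}=(\z_{j+1},\z_{j+2},\dots)$. The same argument that yields \eqref{eq:get_rid_zeta} gives, more generally, that
\begin{equation*}
E^{\o,\z'}_y[f]\;\leq\;\tfrac{1}{\e(1-\e)}\,E^{\o}_y[f]
\end{equation*}
for any nonnegative functional $f$ of the walk on $[0,T_{(j+1)\rho}]$, any $\z'$ and any $y\in[j\rho,(j+1)\rho)$. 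Indeed \eqref{elisa} expresses $P^{\o,\z'}_y$ as a convex combination of $P^{\o}_y(\,\cdot\,|X_{T_{(j+1)\rho}}=(j+1)\rho)$ and $P^{\o}_y(\,\cdot\,|X_{T_{(j+1)\rho}}>(j+1)\rho)$ whose coefficients are bounded by $\tfrac{1}{\e(1-\e)}$ times the mixture weights $r_y((j+1)\rho)$ and $1-r_y((j+1)\rho)$ appearing in the analogous decomposition of $P^{\o}_y$ (Lemma \ref{lemma:epsilon} guarantees $r_y((j+1)\rho)\geq 2\e$). Applying this to $f=\sum_{n=0}^{T_{(j+1)\rho}-1}\mathds{1}\{X_n=k\}\leq N_\infty(k)$ produces
\begin{equation*}
E^{\o,\z}_0\!\left[\sum_{n=T_{j\rho}}^{T_{(j+1)\rho}-1}\mathds{1}\{X_n=k\}\,\Big|\,\cF_{T_{j\rho}},\z\right]\;\leq\;\tfrac{1}{\e(1-\e)}\,E^{\o}_y[N_\infty(k)]\,.
\end{equation*}

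To conclude, I would translate coordinates via $E^{\o}_y[N_\infty(k)]=E^{\t_y\o}_0[N_\infty(k-y)]$ and invoke Proposition \ref{prop:function_g}, which (since $k\leq 0\leq y$) gives the bound $g_{\t_y\o}(y+|k|)$. The monotonicity \eqref{spring2}, applied with $\o$ replaced by $\t_{j\rho}\o$, shift $\ell=y-j\rho\geq 0$, and argument $|k|+j\rho$, then yields the $y$-free estimate $g_{\t_y\o}(y+|k|)\leq g_{\t_{j\rho}\o}(|k|+j\rho)$. Taking expectations and summing over $j\geq 0$ produces \eqref{frappe1}. The only real obstacle is the promised extension of \eqref{eq:get_rid_zeta} from the hitting-time functional to general nonnegative functionals of the walk on $[0,T_{(j+1)\rho}]$; as sketched above, it reduces to a one-line convex-combination comparison using the mixture weights in \eqref{elisa}.
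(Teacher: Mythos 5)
Your proposal is correct and follows essentially the same route as the paper: the same decomposition $N_\infty(k)=\sum_{j\geq 0}N_{[T_{j\rho},T_{(j+1)\rho})}(k)$, the strong Markov property at $T_{j\rho}$, the comparison with $P^\o_y$ at cost $\tfrac{1}{\e(1-\e)}$ (the paper invokes the derivation of (33) in \cite{CP} where you spell out the convex-combination argument from \eqref{elisa}, which is the same computation), and then Proposition \ref{prop:function_g} together with the monotonicity \eqref{spring2} to replace $g_{\t_y\o}(|k|+y)$ by $g_{\t_{j\rho}\o}(|k|+j\rho)$. No gaps.
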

\begin{proof}
As for the derivation of (33) in  \cite{CP} one can prove that, if $y \in [j \rho, (j+1) \rho)$,  then
\begin{equation}\label{passo1}  E^{\o, \z}_y \left[ N_{ [T_{j \rho}, T_{(j+1) \rho} )}(k) \right]\leq \frac{1}{\e(1-\e)} E^\o_y \left[ N_{ [T_{j \rho}, T_{(j+1) \rho} )}(k) \right].
\end{equation}
On the other hand, by applying Prop.~\ref{prop:function_g} , we get
\begin{equation}\label{passo2} E^\o_y \left[ N_{ [T_{j \rho}, T_{(j+1) \rho} )}(k) \right]\leq E^\o_y \left[ N_\infty (k) \right]= E^{\t_y \o}_0 \left[ N_\infty (k-y) \right] \leq g_{\t_y\o }(|k|+y)\,.  
\end{equation}
At this point we write $y$ as $y= j \rho +\ell$ and set $\o':= \t_{j \rho }\o$. Then, by applying  \eqref{spring2} in  Lemma \ref{piccolino}, we get
\begin{equation}\label{passo3} g_{\t_y\o }(|k|+y) = g_{\t _\ell \o'}( |k|+ j \rho +\ell) \leq g_{\o'} ( |k|+ j \rho) = g_{\tau_{j \rho} \o} ( |k|+ j \rho)  \,.
\end{equation}
As a byproduct of \eqref{passo1}, \eqref{passo2} and \eqref{passo3} we conclude that
\begin{equation}
E^{\o, \z}_y \left[ N_{ [T_{j \rho}, T_{(j+1) \rho} )}(k) \right]\leq \frac{1}{\e(1-\e)}  g_{\tau_{j \rho} \o} ( |k|+ j \rho)\,.
\end{equation}
The above bound and the strong Markov property  applied at time $T_{j \rho}$ (which holds by construction of $P_0 ^{\o, \z}$)  imply that
\begin{equation}\label{frappe101}
E^{\o, \z}_0  \left[ N_{ [T_{j \rho}, T_{(j+1) \rho} )}(k) \right] = E^{\o, \z}_0  \left[
 E^{\o, \z}_{ X_{T_{j\rho} } }   \left[  N_{ [T_{j \rho}, T_{(j+1) \rho} )}(k) \right]
  \right] \leq   \frac{1}{\e(1-\e)}  g_{\tau_{j \rho} \o} ( |k|+ j \rho)\,.
\end{equation}
Since $ N_\infty (k) = \sum _{j=0}^\infty N_{ [T_{j \rho}, T_{(j+1) \rho} )}(k) $, the above bound
\eqref{frappe101} implies \eqref{frappe1}.
\end{proof}

\subsection{Speed for the truncated  process}\label{veloce}  
Recall that $\rho< \infty$ is fixed and recall  Warning \ref{olanda}. Here we follow the usual notation, indicating explicitly $\rho$, and we also write $P_0^{\o, \z, \rho}$ instead of $P_0^{\o,\z}$  to stress the dependence on $\rho$.

\begin{Proposition}\label{prop_vel} Fix $\rho <+\infty$.
For $\bbP$--a.a.~$\o \in \O$ it holds 
\begin{equation}\label{eqn:truncated_speed}
\vrho  (\l):= \lim_{n \to \infty} \frac{ X_n^\rho }{n}=\frac{\rho E[\ell_1]}{\bbE'[ E^{\o,\zeta}_0[T_{\ell_{1}\rho}^\rho  ]]}=\frac{\rho }{\e \bbE' [E^{\o,\zeta}_0[T_{\ell_{1}\rho}^\rho]]} \qquad
P^{\o, \rho}_0\text{--a.s.}
\end{equation}
where $\varepsilon$ is the same as in Lemma \ref{lemma:epsilon}.
Moreover, $\vrho   (\l) $ does not depend on $\o$ and 
\begin{equation}\label{vunifbounds}
\vrho(\l)  \in (c_1,c_2)
\end{equation}
for strictly positive constants $c_1,c_2$, which do neither depend on $\omega$ nor on $\rho$.
\end{Proposition}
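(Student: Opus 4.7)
The plan is to apply Birkhoff's ergodic theorem to the cycle-stationary ergodic structure from Lemma \ref{lemma:cycle_stationary}, identify the limit along the regeneration subsequence $(T^\rho_{\ell_n\rho})_n$, and then extend to arbitrary $n$ by a sandwich. Set $\alpha_k:=T^\rho_{\ell_{k+1}\rho}-T^\rho_{\ell_k\rho}$ and $\sigma_k:=(\ell_{k+1}-\ell_k)\rho$, with the convention $T^\rho_{\ell_0\rho}=0$. Both are measurable functions of the cycle $\cC_k$, hence $(\alpha_k,\sigma_k)_{k\geq0}$ is stationary and ergodic under $P\otimes\bbP\otimes P^{\o,\z,\rho}_0$ by Lemma \ref{lemma:cycle_stationary}. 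The increments $\ell_{k+1}-\ell_k$ are i.i.d.~geometric with parameter $\e$, so $E[\sigma_0]=\rho/\e$, while \eqref{eq:regeneration_bound} yields $\mu:=\bbE'[E^{\o,\z,\rho}_0[T^\rho_{\ell_1\rho}]]<\infty$. By Birkhoff, $P\otimes\bbP\otimes P^{\o,\z,\rho}_0$--a.s.~we have $T^\rho_{\ell_n\rho}/n\to\mu$ and $\ell_n\rho/n\to\rho/\e$. Since $X^\rho_{T^\rho_{\ell_n\rho}}=\ell_n\rho$ by construction of the coupling,
\[
\frac{X^\rho_{T^\rho_{\ell_n\rho}}}{T^\rho_{\ell_n\rho}}=\frac{\ell_n\rho}{T^\rho_{\ell_n\rho}}\xrightarrow[n\to\infty]{}\frac{\rho/\e}{\mu}=:v,
\]
which matches both expressions in the proposition via $E[\ell_1]=1/\e$.

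To pass from the subsequence $(T^\rho_{\ell_k\rho})_k$ to arbitrary $n$, I would use the following sandwich, valid for $T^\rho_{\ell_k\rho}\leq n<T^\rho_{\ell_{k+1}\rho}$: the upper bound $X^\rho_n<\ell_{k+1}\rho$ follows from the very definition of $T^\rho_{\ell_{k+1}\rho}$, and the lower bound $X^\rho_n\geq\ell_k\rho-\rho\alpha_k$ follows from $X^\rho_{T^\rho_{\ell_k\rho}}=\ell_k\rho$ together with the fact that each step of the $\rho$--truncated walk has magnitude at most $\rho$. Hence
\[
\frac{\ell_k\rho-\rho\alpha_k}{T^\rho_{\ell_{k+1}\rho}}\ \leq\ \frac{X^\rho_n}{n}\ \leq\ \frac{\ell_{k+1}\rho}{T^\rho_{\ell_k\rho}}.
\]
The upper bound tends to $v$ by the previous step. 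For the lower bound one further needs $\alpha_k/k\to0$ a.s., which is a routine consequence of Birkhoff applied to $\alpha_0+\cdots+\alpha_{k-1}$ with finite mean $\mu$; this forces $\rho\alpha_k/T^\rho_{\ell_{k+1}\rho}\to 0$ and gives $X^\rho_n/n\to v$ under $P\otimes\bbP\otimes P^{\o,\z,\rho}_0$. Since averaging $P^{\o,\z,\rho}_0$ over $\z$ yields $P^{\o,\rho}_0$ and the limit $v$ is deterministic, the convergence holds $P^{\o,\rho}_0$--a.s.~for $\bbP$--a.a.~$\o$; in particular $v$ does not depend on $\o$.

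Finally, the uniform bounds \eqref{vunifbounds} follow from Lemma \ref{gnocchi}. By \eqref{basso} (integrating the deterministic bound in $\bbE$), we have $\mu\geq K_1\rho$, hence $v=\rho/(\e\mu)\leq 1/(\e K_1)=:c_2$. For the lower bound, using \eqref{eq:regeneration_bound} together with the trivial inequality $(1-e^{-2\l d})^{-2}\leq\rho(1-e^{-2\l d})^{-2}$ valid for $\rho\geq 1$, one obtains $\mu\leq C_\l\rho$ for a constant $C_\l>0$ independent of $\rho$ and $\o$, hence $v\geq 1/(\e C_\l)=:c_1>0$. The only delicate point I expect is the sandwich in the second paragraph, namely verifying $\alpha_k/k\to 0$ a.s.~and arguing that the resulting correction $\rho\alpha_k/T^\rho_{\ell_{k+1}\rho}$ is truly negligible compared to the main term $\ell_k\rho/T^\rho_{\ell_{k+1}\rho}$; the rest is a direct application of the regenerative machinery of Section \ref{rigenero}.
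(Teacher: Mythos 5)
Your proposal is correct and takes essentially the same route as the paper: Birkhoff's theorem for the cycle--stationary ergodic sequence of Lemma \ref{lemma:cycle_stationary}, a sandwich between the regeneration times (the paper's display \eqref{ki-energy} uses the lower bound $\ell_{k+1}\rho-\rho\,(T^\rho_{\ell_{k+1}\rho}-T^\rho_{\ell_k\rho})$, an inessential variant of yours), averaging $P_0^{\o,\z,\rho}$ over $\z$ to recover $P_0^{\o,\rho}$, and the bounds \eqref{basso} and \eqref{eq:regeneration_bound} for \eqref{vunifbounds}.
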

\begin{proof} We work on  the probability space $( \Theta, P \otimes \bbP \otimes P_0^{\o,\z,\rho})$ where $\Theta:= \{0,1\}^{\bbN_+} \times \O  \times \bbZ^{\bbN}$.
 For $n\in [T_{\ell_k\rho}^\rho , T_{\ell_{k+1}\rho}^\rho)$ we have $\ell_{k+1}\rho-(T_{\ell_{k+1}\rho}^\rho-T_{\ell_{k}\rho}^\rho )\rho<X_n^\rho<\ell_{k+1}\rho$ (note in particular that $X_n^\rho$ has to be thought as a  function on $\Theta$). It then follows
\begin{equation}\label{ki-energy}
\frac{\ell_{k+1}\rho-(T^\rho_{\ell_{k+1}}-T^\rho_{\ell_{k}\rho})\rho}{T^\rho _{\ell_{k+1}\rho}}<\frac{X_n^\rho}{n}<\frac{\ell_{k+1}\rho}{T^\rho _{\ell_k\rho}}\,.
\end{equation}

Due to the cycle stationarity and ergodicity stated in Lemma \ref{lemma:cycle_stationary},   we  let $n \to \infty$ in \eqref{ki-energy} and obtain  that the limit  in  \eqref{eqn:truncated_speed} holds $P \otimes \bbP \otimes P_0^{\o,\z,\rho}$--a.s.
 On the other hand  $|X_n^\rho /n| \leq \rho$. Hence, by the dominated convergence theorem, 
  $E[ X_n ^\rho /n ] $ converges to ${\rho E[\ell_1]}/{\bbE'[ E^{\o,\zeta}_0[T_{\ell_{1}\rho}^\rho  ]]}$. 
To conclude the proof of \eqref{eqn:truncated_speed}, it is enough to recall  that,  by   averaging $P_0^{\o,\z,\rho}$ over $\z$,  one obtains  the law   $P^{\o ,\rho }_0$  of  the   original random walk $(X_n^\rho )_{n \geq0}$.

Finally, we observe that $\vrho$ does not depend on  $\o$ since the last term in \eqref{eqn:truncated_speed} doesn't, and that 
$\vrho   (\l) \in (c_1,c_2)$ due to 
 \eqref{basso} and  \eqref{eq:regeneration_bound}.
\end{proof}
\section{Stationary distribution $\bbQ^\rho$ of the environment viewed from the $\rho$--walker }\label{tronco}

   In this section we assume that $\bbE \bigl[ e^{(1-\l) x_1}\bigr]< \infty$ and we fix $\rho <\infty$.
 We consider the process \emph{environment viewed from the $\rho$--walker}, which is   the Markov chain $(\tau_{X_n^\rho}\o)_{n \in\N }$ on the space of environments $\Omega$ with transition mechanism induced by $P^{\o,\rho}_0$. When starting with initial distribution $Q$, we denote by $ \cP ^\rho _{Q}$ its law as probability distribution on $\O^\bbN$.
 
  Lemma \ref{lemma:cycle_stationary} and  bound \eqref{eq:regeneration_bound} in Lemma \ref{gnocchi} guarantee  (cf.~\cite[Sec.~4, Chapter 8]{T}) the existence of a stationary distribution  $\bbQ^\rho$ of the process \emph{environment viewed from the $\rho$--walker}, such that $\bbQ^\rho$ is absolutely continuous with respect to $\bbP$. 
    
From \cite{T}[Eq.~(4.14), Chapter 8],   $\bbQ^\rho$ can be characterized by its expectation:
\begin{equation}
\bbE^\rho[f(\o)]=\frac{1}{\bbE'[ E_0^{\o,\zeta, \rho }[T_{\ell_1\rho}]]}\bbE' \Big[ E^{\o,\zeta, \rho }_0\Big[\sum_{k=1}^{T _{\ell_1\rho}}f(\tau_{X_k }\o)\Big]\Big].
\end{equation}

%


As in \cite{CP}[Prop.~3.4] one can prove that $\bbQ^\rho$ is absolutely continuous to $\bbP$ with Radon--Nikodym derivative
\begin{equation}\label{umbria}
\frac{ d\bbQ^\rho}{d \bbP} (\o) =\frac{1}{\bbE'[ E_0^{\o,\zeta, \rho }[T_{\ell_1\rho}]]}\sum _{k \in \bbZ} E  E^{\t_{-k} \o, \z, \rho}_0 \left[ N_{T_{\ell_1 \rho}} (k)\right]\,.
\end{equation}
Note that the denominator in the r.h.s. is finite due to \eqref{eq:regeneration_bound} and the numerator is   positive. As a consequence, $\bbP$ is also absolutely continuous to $\bbQ^\rho$.

\begin{Lemma}\label{lemma:ergodicity_q_rho}
Fix $\rho\in \bbN_+$. Then $\bbQ^\rho$ is ergodic with respect to shifts for the environment seen from the $\rho$-walker.
\end{Lemma}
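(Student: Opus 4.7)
The plan is to deduce ergodicity from the cycle--ergodicity provided by Lemma \ref{lemma:cycle_stationary}. Take a set $A\subset\Omega$ invariant under the Markov chain $(\tau_{X_n^\rho}\o)_{n\geq 0}$, meaning $\mathds{1}_A(\tau_{X_1^\rho}\o)=\mathds{1}_A(\o)$ for $\bbQ^\rho\otimes P^{\o,\rho}_0$--a.e.\ pair $(\o,X_\cdot)$. It suffices to show $\bbQ^\rho(A)\in\{0,1\}$. Setting $h:=\mathds{1}_A$, iteration of this identity (propagating along the chain via the Markov property and stationarity of $\bbQ^\rho$) combined with a countable intersection of null sets yields $h(\tau_{X_n^\rho}\o)=h(\o)$ for every $n\geq 0$, $P_0^{\o,\rho}$--a.s., for $\bbQ^\rho$--a.e.\ $\o$. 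Since $\bbP$ and $\bbQ^\rho$ are mutually absolutely continuous (see the remark just after \eqref{umbria}), the same identity holds $\bbP$--a.s.

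Evaluating at the regeneration times $n=T_{\ell_k\rho}^\rho$, at which $X^\rho_{T_{\ell_k\rho}^\rho}=\ell_k\rho$ by construction, one obtains
\[
h(\tau_{\ell_k\rho}\o)=h(\o)\qquad\text{for every }k\geq 0\,,
\]
$P\otimes \bbP\otimes P^{\o,\z,\rho}_0$--almost surely. By Lemma \ref{lemma:cycle_stationary} the cycle sequence $(\cC_k)_{k\geq 0}$ is stationary and ergodic under $P\otimes \bbP\otimes P^{\o,\z}_0$; since $\tau_{\ell_k\rho}\o$ is the starting environment of $\cC_k$, and therefore a measurable factor of it, the shifted--environment sequence $(\tau_{\ell_k\rho}\o)_{k\geq 0}$ is itself stationary and ergodic, with common marginal law $\bbP$. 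Birkhoff's theorem then delivers
\[
\frac{1}{n}\sum_{k=0}^{n-1}h(\tau_{\ell_k\rho}\o)\;\longrightarrow\;\bbE[h(\o)]=\bbP(A)\qquad\text{a.s.,}
\]
whereas the left--hand side is identically equal to $h(\o)$ by the previous step. Hence $h(\o)$ coincides $\bbP$--a.s.\ with the constant $\bbP(A)$; being $\{0,1\}$--valued, $h$ must be $\bbP$--a.s.\ constant, forcing $\bbP(A)\in\{0,1\}$ and, by mutual absolute continuity, $\bbQ^\rho(A)\in\{0,1\}$, which is the claim.

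The delicate point is the passage from the one--step invariance $h(\tau_{X_1^\rho}\o)=h(\o)$, $P^{\o,\rho}_0$--a.s.\ for $\bbQ^\rho$--a.e.\ $\o$, to its validity along the whole trajectory (and in particular at the random times $T_{\ell_k\rho}^\rho$). This requires the strong Markov property under $P_0^{\o,\z,\rho}$ together with a uniform choice of null sets that does not depend on $n$ or $k$, but is otherwise routine. A secondary, standard point is that ergodicity of the cycle sequence $(\cC_k)_{k\geq 0}$ transports to its measurable factor $(\tau_{\ell_k\rho}\o)_{k\geq 0}$.
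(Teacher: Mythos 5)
Your argument is correct and follows essentially the same route as the paper, which simply defers to the ergodicity proof of Comets--Popov (the cycle--stationarity/ergodicity of Lemma \ref{lemma:cycle_stationary} evaluated at the regeneration times $T_{\ell_k\rho}$, plus Birkhoff), with the paper's only stated modification being exactly the step you use: replacing CP's formula (29) by the mutual absolute continuity of $\bbQ^\rho$ and $\bbP$ to transfer triviality between the two measures.
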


   \begin{Remark}\label{caffettino}The above ergodicity means that any Borel subset of the path space   $ \O^{\bbN}$, which is left invariant by shifts, has $\cP^\rho _{\bbQ^\rho}$--probability equal to $0$ or $1$.
   
   Due to Theorem 6.9 in \cite{V} (cf.\@ also \cite[Chapter  IV]{Ros}), the above ergodicity 
 is equivalent to the following fact:
$\bbQ^\rho (A) \in \{0,1\}$ whenever $A \subset \O$ 
 is an invariant Borel set, in the sense that 
 ``$\t_{X_n ^\rho} \o \in  A$  for any $n \in \bbN$''  holds $\bbQ^\rho \otimes P_0^{\o, \rho}$--a.s. on $\{\o \in A\}$ and 
 ``$\t_{X_n ^\rho} \o \in  A^c$ for any $n \in \bbN$'' holds $\bbQ^\rho \otimes P_0^{\o, \rho}$--a.s. on $\{\o \in A^c\}$. As usual, $\bbQ^\rho \otimes P_0^{\o, \rho}$ is the probability measure on $ \O \times \bbZ^{\bbN}$ such that the expectation of a function $f$ is given by $ \int \bbQ^\rho (d\o) E_0^{\o, \rho}  \bigl[ f(\o, (X_n)_{n \geq 0})\bigr]$.\end{Remark}

\begin{proof}[Proof of Lemma \ref{lemma:ergodicity_q_rho}]
 The proof   can be obtained as in \cite[page 735--736]{CP}. The only difference is that in \cite{CP}  the authors use their formula (29),  which is not satisfied in our case. More precisely , they use their formula (29) to argue that 
 $0 < \bbP(A) < 1$ for any $\bbQ^\rho$--nontrivial set $A$. On the other hand, this claim follows simply  from the absolute  continuity of  $\bbQ^\rho $ to $ \bbP$.
 \end{proof}

  \smallskip
  
  The rest of this section is devoted to the proof of the following result:
  \begin{Proposition}\label{digiuno} Suppose $\bbE \bigl[ e^{(1-\l) x_1}\bigr]< \infty$ and that $u: \bbR \times \bbR \to \bbR$ is continuous. 
 Then the sequence $( \bbQ^\rho)_{\rho \in \N_+} $ converges weakly to a unique measure $\bbQ^\infty $ as $\rho\to\infty$.  $\bbQ^\infty $ is absolutely continuous  to $\bbP$ and, $\bbP$--a.s., $0<\gamma\leq \frac{d \bbQ^\infty}{ d \bbP} \leq F $ (cf.~\eqref{def_F}).
 Furthermore, 
  $\bbQ^\infty$ is invariant and ergodic
   for the dynamics from the point of view of the $\infty$--walker.\footnote{
   Ergodicity means that the law $\cP^\infty_{\bbQ ^\infty}$ on the path space $\O^{\bbN}$  is ergodic with respect to shifts  (cf.~Remark \ref{caffettino}).}
  \end{Proposition}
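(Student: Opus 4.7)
The plan is to follow the three-step program outlined in Sections \ref{up_bound}, \ref{lo_bound} and \ref{deboluccio}: derive uniform bounds on the Radon--Nikodym derivatives $d\bbQ^\rho/d\bbP$, extract a weak limit, and verify invariance and ergodicity. The starting point is the explicit formula \eqref{umbria}, whose denominator is a constant in $\o$ and at least of order $\rho$ by \eqref{basso}. For the uniform upper bound $F$, I would split the numerator sum over $k \in \bbZ$ at $k = 0$. For $k \leq 0$, Lemma \ref{carnevale1} combined with the translation property \eqref{spring2} yields a pointwise bound by an explicit functional of $\o$ built from $g_\o$. For $k > 0$, the shift identity $E_0^{\t_{-k}\o,\rho}[N_{T_{\ell_1\rho}}(k)] = E_{-k}^{\o,\rho}[N_{T_{\ell_1\rho-k}}(0)]$ reduces the problem to visits of the origin starting from the left, which can again be controlled via Proposition \ref{prop:function_g}. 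The resulting majorant $F$ lies in $L^1(\bbP)$ thanks to \eqref{spring1} and the hypothesis $\bbE[e^{(1-\l)x_1}] < \infty$.

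Second, for the uniform lower bound $\gamma > 0$, bounding the numerator by the $k = 0$ term alone yields only $O(1/\rho)$ because the denominator scales like $\rho$. The idea is to exploit that the numerator has $\Theta(\rho)$ contributions of constant order: for each $k$ in a suitable bulk portion of $[0,\rho)$ the walker starting at $0$ in $\t_{-k}\o$ visits $k$ before the first regeneration epoch with probability uniformly bounded below, an analogue of Lemma \ref{lemma:epsilon} on the right side obtainable from Corollary \ref{cor:p_esc} and the effective-conductance bounds of Proposition \ref{prop:effconductance_bound}. Summing these contributions produces a numerator of order $\rho$ matching the denominator's growth in \eqref{eq:regeneration_bound}, and yielding a $\gamma$ independent of $\o$ and $\rho$.

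Third, the uniform bounds render the family $\{d\bbQ^\rho/d\bbP\}_\rho$ uniformly integrable, so Dunford--Pettis gives relative weak $L^1(\bbP)$-compactness, and any subsequential weak limit $h_\infty$ inherits $\gamma \leq h_\infty \leq F$. Setting $d\bbQ^\infty/d\bbP := h_\infty$, invariance under the $\infty$-walker is obtained by passing to the limit in $\int P^\rho f\, d\bbQ^\rho = \int f\, d\bbQ^\rho$ for $f \in C_b(\O)$: the continuity of $u$ together with Lemma \ref{pierpaolo} (for uniform tail control of the jump distributions) give $P^\rho f \to P^\infty f$ pointwise $\bbP$-a.s., and $F$ dominates, so dominated convergence applies on both sides.

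The main obstacle I foresee is the ergodicity of $\bbQ^\infty$, from which uniqueness of the weak limit will follow (since any two ergodic invariant probability measures both sandwiched between $\gamma\,\bbP$ and $F\,\bbP$ are mutually absolutely continuous, hence equal). The cycle-stationarity argument behind Lemma \ref{lemma:ergodicity_q_rho} is unavailable for $\rho=\infty$ because jumps are unbounded. My plan is to transfer ergodicity through the approximation: for $A$ invariant under the $\infty$-walker in the sense of Remark \ref{caffettino}, use Lemma \ref{pierpaolo} to show that $A$ is approximately invariant under the $\rho$-walker on fixed finite time horizons; then exploit the ergodicity of $\bbQ^\rho$ together with the uniform equivalence $\gamma \leq d\bbQ^\rho/d\bbP \leq F$ to deduce $\bbQ^\rho(A) \in \{0,1\}+o_\rho(1)$, and finally take $\rho \to \infty$ to conclude $\bbQ^\infty(A) \in \{0,1\}$.
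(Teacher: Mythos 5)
The genuine gap is the ergodicity of $\bbQ^\infty$, and it stems from a misreading of how ergodicity is obtained for finite $\rho$. The proof of Lemma \ref{lemma:ergodicity_q_rho} is \emph{not} the cycle-stationarity/regeneration construction (that is only used to build $\bbQ^\rho$ and to estimate $T_{\ell_1\rho}$); it is the argument of \cite[pp.~735--736]{CP}, which uses only the absolute continuity of the invariant measure with respect to $\bbP$, the spatial ergodicity of $\bbP$ (Assumption (A1)), and the walker's ability to reach sites to the right (Lemma \ref{lemma:epsilon}, transience). Nothing in that argument requires bounded jumps, and this is precisely how the paper treats $\rho=\infty$: once $\bbQ^\infty\ll\bbP$ is known, the ergodicity of $\bbQ^\infty$ is proved verbatim as in Lemma \ref{lemma:ergodicity_q_rho}. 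Your proposed substitute does not work: from ``$A$ is approximately invariant for the $\rho$-walker on fixed finite time horizons'' together with the purely qualitative ergodicity of $\bbQ^\rho$ one cannot deduce that $\bbQ^\rho(A)$ is within $o_\rho(1)$ of $\{0,1\}$ --- ergodicity constrains exactly invariant sets only, and absent quantitative mixing estimates uniform in $\rho$ (which are neither available nor claimed) an almost-invariant set on a finite horizon can perfectly well have measure $1/2$. Since in your scheme uniqueness of the limit (hence convergence of the whole sequence $(\bbQ^\rho)$) is derived from ergodicity, this gap leaves both the ergodicity statement and the uniqueness/convergence claim unproven.

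The remaining steps are sound and in places take a legitimately different route from the paper. Using $\gamma\leq d\bbQ^\rho/d\bbP\leq F$ with $F\in L^1(\bbP)$ to get weak $L^1$-compactness via Dunford--Pettis replaces the tightness-plus-Portmanteau argument of Lemma \ref{chiave}, and your passage to the limit in the invariance identity with the uniform tail bound of Lemma \ref{pierpaolo} is essentially the $B_1+B_2+B_3$ estimate of Appendix \ref{ape}. Your lower-bound idea is in fact more direct than Subsection \ref{lo_bound}: for $0<k<\rho$ the $k$-th numerator term in \eqref{umbria} is at least $P_0^{\t_{-k}\o,\rho}(X_{T_k}=k)=r^\rho_{-k}(0)\geq 2\e$ by Lemma \ref{lemma:epsilon} itself (no conductance-based ``right-side analogue'' is needed), and combined with the $O(\rho)$ bound \eqref{eq:regeneration_bound} on the denominator this yields a uniform $\gamma$ without the empirical-measure argument. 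One caution on the upper bound: the shift identity you invoke for $k>0$ is not exact, because the regeneration levels $j\rho$ are anchored at the walker's starting point and do not translate with the environment; the paper's Proposition \ref{aereo} handles this through the block decomposition and the bound \eqref{leone}. Since Proposition \ref{aereo} and Corollary \ref{cor:rn_lower_bound} are available as standing results, these re-derivations are inessential; the ergodicity step is the part that must be repaired.
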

Having Lemma \ref{pierpaolo} and  Lemma  \ref{chiave} below, Proposition \ref{digiuno} can be proved by the same arguments used in \cite[p.~735]{CP}, with some slight modifications. For completeness, we give the proof in Appendix \ref{ape}.


\subsection{Upper bound for the Radon-Nikodym derivative  $d\bbQ^\rho/d\P$}\label{up_bound}
\begin{Proposition}\label{aereo}
Suppose $\bbE[ e^{(1-\l) x_1}] < \infty$. Then, uniformly in $\rho \in \N_+$,
\begin{equation}\label{eqn:bound_radonnikodym}
\frac{ d\bbQ^\rho}{d \bbP} (\o)  \leq F(\o) \qquad \bbP-a.s.,
\end{equation}
where
\begin{equation}\label{def_F}
F (\o):=K \pi^1(0)  \sum_{j=0}^\infty  (j+2)^2{\rm e}^{-2\l x_j +(1-\l) (x_{j+1}-x_{j}) } \,,
\end{equation}
for some constant $K>0$.
Moreover,
$\bbE[F]<\infty$.
\end{Proposition}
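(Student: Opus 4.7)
The plan is to bound numerator and denominator in the explicit formula \eqref{umbria} separately. The denominator is bounded below by $K_1\rho$ via \eqref{basso} of Lemma \ref{gnocchi}, so the main task is to show that the numerator $\sum_{k \in \bbZ} E\, E^{\t_{-k}\o,\z,\rho}_0[N_{T_{\ell_1\rho}}(k)]$ is at most a constant multiple of $\rho\,F(\o)$, uniformly in $\rho$. I would split $\sum_{k \in \bbZ}$ into contributions from $k \leq 0$ and $k > 0$, which are handled by different tools.

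For $k \leq 0$, the trivial bound $N_{T_{\ell_1\rho}}(k) \leq N_\infty(k)$ combined with Lemma \ref{carnevale1} applied to the environment $\t_{-k}\o$ gives
\[E^{\t_{-k}\o,\z,\rho}_0[N_\infty(k)] \leq \frac{1}{\e(1-\e)} \sum_{j \geq 0} g_{\t_{j\rho+|k|}\o}(j\rho+|k|).\]
A computation via the shift identities from Lemma \ref{piccolino} (as in \eqref{aurora100}) yields $g_{\t_{j\rho+|k|}\o}(j\rho+|k|) = K_0\,\pi^1(0)[\o] \sum_{m \geq j\rho+|k|} e^{-2\l x_m+(1-\l)(x_{m+1}-x_m)}$. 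After swapping sums, the number of pairs $(j,|k|)$ with $j,|k|\geq 0$ and $j\rho+|k| \leq m$ is bounded by $(m+2)^2$, producing the $k\leq 0$ contribution $K'\pi^1(0)\sum_m (m+2)^2 e^{-2\l x_m+(1-\l)(x_{m+1}-x_m)}$.

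For $k > 0$ the argument is more delicate because $E^{\t_{-k}\o,\rho}_0[N_\infty(k)]$ is uniformly bounded in $k$ (this follows by reversibility of the $\rho$-walker together with shift and Proposition \ref{prop:function_g}), so a direct bound by $N_\infty$ is not summable. Instead, I keep the indicator $\mathds{1}_{j<\ell_1}$ in the block decomposition $N_{T_{\ell_1\rho}}(k) = \sum_{j\geq 0}\mathds{1}_{j<\ell_1}\, N_{[T_{j\rho},T_{(j+1)\rho})}(k)$. Since $\mathds{1}_{j<\ell_1}$ is $\cF_{T_{j\rho}}$-measurable while the tail $(\z_l)_{l>j}$ is independent of $\cF_{T_{j\rho}}$, the strong Markov property together with the $\z$-averaging of type \eqref{passo1} produces the factor $(1-\e)^j=P(j<\ell_1)$. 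The $\rho$-bounded jumps force $N_{[T_{j\rho},T_{(j+1)\rho})}(k)=0$ unless $k<(j+1)\rho$. For $y = X_{T_{j\rho}}\in[j\rho,(j+1)\rho)$ I bound $E^{\t_{-k}\o}_y[N_\infty(k)]$ using Proposition \ref{prop:function_g} via shift when $y\geq k$, and the reversibility of the $\rho$-walker with invariant measure $\pi^\rho$ (together with Lemma \ref{lorenzo}) when $y<k$; both cases give $\leq K'\pi^1(k)[\t_{-k}\o] \sum_{m\geq \max(y,k)} e^{-2\l x_m+(1-\l) Z_m}[\t_{-k}\o]$. Converting back to $\o$ via $\pi^1(k)[\t_{-k}\o]=e^{-2\l x_{-k}}\pi^1(0)[\o]$, summing over $k\in(0,(j+1)\rho)$, and using $\sum_{j\geq 0} (1-\e)^j\min((j+1)\rho,\, n+\rho)\leq \rho/\e^2+(n+\rho)/\e$, the $k>0$ contribution is also bounded by $K''\rho\,\pi^1(0)\sum_n (n+2)^2 e^{-2\l x_n+(1-\l)(x_{n+1}-x_n)}$.

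Combining the two contributions and dividing by the denominator $K_1\rho$ produces $d\bbQ^\rho/d\bbP \leq F(\o)$ uniformly in $\rho$, with $F$ as in \eqref{def_F} after suitably adjusting the constant $K$. The claim $\bbE[F]<\infty$ follows from the uniform bound $\pi^1(0)\leq 2e^{u_{\max}}$ (since $c_{0,1}=e^{-(1-\l)x_1+u(E_0,E_1)}$, $c_{-1,0}=e^{(1+\l)x_{-1}+u(E_{-1},E_0)}$ with $x_1, -x_{-1}\geq d$) together with $x_j\geq dj$ and stationarity, yielding $\bbE[e^{-2\l x_j+(1-\l)(x_{j+1}-x_j)}]\leq e^{-2\l dj}\bbE[e^{(1-\l)Z_0}]$, so $\bbE[F]\leq K'''\,\bbE[e^{(1-\l)Z_0}]\sum_j(j+2)^2 e^{-2\l dj}<\infty$. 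The main obstacle is the $k>0$ case: a naive bound by $N_\infty$ is not summable, and one must exploit the geometric tail $(1-\e)^j$ of $\ell_1$ in tandem with the hard constraint $k<(j+1)\rho$ from the bounded jumps of the $\rho$-walker to recover summability.
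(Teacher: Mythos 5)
Your proposal is correct and follows essentially the same route as the paper: the same starting point \eqref{umbria} with the denominator bounded below via \eqref{basso}, the same split into $k\leq 0$ (Lemma \ref{carnevale1} plus the shift identity, which is exactly Lemma \ref{tennis} re-derived inline) and $k>0$ (block decomposition over $[T_{j\rho},T_{(j+1)\rho})$, the geometric tail of $\ell_1$, the constraint $k<(j+1)\rho$, and the $\z$-averaging of type \eqref{passo1}), with the same cancellation of the shift factors and the same argument for $\bbE[F]<\infty$. The only deviation is your use of reversibility of the $\rho$-walk (with respect to $\pi^\infty$, comparable to $\pi^1$ by Lemma \ref{lorenzo}) to treat the same-block case $y<k$; this is valid, but the paper gets the same bound more simply from the Markov property at the first visit of $k$, namely $E^{\o,\rho}_y[N_\infty(k)]\leq E^{\o,\rho}_k[N_\infty(k)]$, followed by Proposition \ref{prop:function_g}.
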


Before proving Prop.~\ref{aereo} we state a technical result:
\begin{Lemma}\label{tennis}
 Let $F_* (\o):=K_0  \sum_{i=0}^\infty(i+1) {\rm e} ^{-2\l x_{i} +(1-\l) ( x_{i+1} -x_{i})} $, with $K_0$ as in Proposition  \ref{prop:function_g}. Then
\begin{equation}\label{nadal}
\sum_{j=0}^\infty g_{\t_{j \rho} \o} ( |k|+j\rho) \leq \sum_{r=0}^\infty g_{\t_{r} \o} ( |k|+r) \leq \pi^1(-|k|) F_*(\o)\,.
\end{equation}
\end{Lemma}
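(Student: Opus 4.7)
The plan is to handle the two inequalities of \eqref{nadal} separately, and both come out rather directly from already-proved material.

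For the first inequality, I would argue term-by-term. Since $g_\omega(n) \geq 0$ for all $\omega$ and $n$, and since the map $j \mapsto j\rho$ sends $\mathbb{N}$ injectively into $\mathbb{N}$, the left-hand sum $\sum_{j=0}^\infty g_{\tau_{j\rho}\omega}(|k|+j\rho)$ collects exactly those terms of $\sum_{r=0}^\infty g_{\tau_r\omega}(|k|+r)$ corresponding to indices $r = j\rho$, $j \geq 0$. Non-negativity of the remaining terms in the right-hand sum then gives the desired inequality. (If $\rho = 1$ the two sums coincide.)

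For the second inequality, I would use the identity derived in \eqref{aurora100}, which in the present notation reads
\[
g_{\tau_\ell\omega}(|k|+\ell) \;=\; K_0\,\pi^1(-|k|)\sum_{j=0}^\infty e^{-2\lambda x_{j+\ell}+(1-\lambda)(x_{j+1+\ell}-x_{j+\ell})}
\]
for every $\ell \geq 0$ (this is exactly the calculation carried out just before in the proof of \eqref{spring2}, specialized with $k$ replaced by $|k|$). Summing over $\ell = r \geq 0$ and exchanging the order of summation via the substitution $i = j+r$, I would get
\[
\sum_{r=0}^\infty g_{\tau_r\omega}(|k|+r) \;=\; K_0\,\pi^1(-|k|)\sum_{i=0}^\infty \#\{(j,r)\in\mathbb{N}^2 : j+r=i\}\, e^{-2\lambda x_i+(1-\lambda)(x_{i+1}-x_i)}.
\]
Since the cardinality in brackets equals $i+1$, the right-hand side is exactly $\pi^1(-|k|)\, F_*(\omega)$, so the second inequality is in fact an equality.

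There is no real obstacle here: the only subtlety is to be careful that the identity \eqref{aurora100} is applied with the correct argument $|k|+\ell$ (not $|k|\rho + \ell$ or the like) and to justify the Fubini-type interchange of the double sum, which is immediate because all terms are non-negative. Everything else is bookkeeping.
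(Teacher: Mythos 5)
Your proof is correct and takes essentially the same route as the paper: the first inequality is just non-negativity of the summands (the paper calls it trivial), and the second follows from the exact identity \eqref{aurora100} applied with argument $|k|+r$, summing over $r$ and counting the pairs $(j,r)$ with $j+r=i$, which yields the factor $(i+1)$. The only cosmetic difference is that you observe the second bound is in fact an equality, while the paper states it as an inequality (and also cites \eqref{spring0}, which is not really needed for this step).
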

\begin{proof}
The first inequality in \eqref{nadal} is trivial. We prove the second one.
By \eqref{spring0} and \eqref{aurora100} we can write 
\begin{equation}
\begin{split}
\sum_{r=0}^\infty g_{\t_{r} \o} ( |k|+r)& \leq K_0 \pi^1(-|k|)
 \sum_{r=0}^\infty  \sum_{j=0}^\infty {\rm e} ^{-2\l x_{j+r} +(1-\l) ( x_{j+1+r} -x_{j+r})}\\
 & \leq  K_0 \pi^1(-|k|) \sum_{i=0}^\infty(i+1) {\rm e} ^{-2\l x_{i} +(1-\l) ( x_{i+1} -x_{i})}\qedhere
  \end{split}
  \end{equation}
\end{proof}

We can now prove Prop. \ref{aereo}:

\begin{proof}[Proof of Prop.~\ref{aereo}]
Due to \eqref{basso} and \eqref{umbria} we can bound, for some constant $C>0$,
\begin{equation}\label{8978}
\frac{ d\bbQ^\rho}{d \bbP} (\o) \leq \frac{C}{\rho}\left[  H_+(\o)+ H_-(\o)\right]
\end{equation} where
\begin{align*}
 H_+ (\o):=  \sum _{k > 0 } E  E^{\t_{-k} \o, \z, \rho}_0 \left[ N_{T_{\ell_1 \rho}} (k)\right]\,,
\qquad
 H_-(\o):=  \sum _{k \leq 0 } E  E^{\t_{-k} \o, \z, \rho}_0 \left[ N_{T_{\ell_1 \rho}} (k)\right]\,.
\end{align*}
As a byproduct of Lemma \ref{carnevale1} and Lemma \ref{tennis} it holds (see the proof of \eqref{spring2} for the equality below)
\begin{equation}\label{federer}
H_-(\o) \leq \frac{1}{\e(1-\e)} \sum_{ k \leq 0} 
\pi^1(k)[ \t_{-k} \o] 
 F_*(\t_{-k} \o)=\frac{1}{\e(1-\e)} \pi^1(0) \sum_{ k \leq 0} {\rm e}^{- 2 \l x_{-k}}
 F_*(\t_{-k} \o)
\,.
\end{equation}

Let us bound $H_+(\o)$. We can write
\begin{equation}\label{ciocco}
\begin{split}
\sum _{k  =  0 }^\infty  E  E^{\t_{-k} \o, \z, \rho}_0 &\left[ N_{T_{\ell_1 \rho}} (k)\right]
 		= \sum _{ m =0 }^\infty \sum_{ k \in [m \rho, (m+1) \rho)} \sum_{i=1}^\infty E\,
			\left[ \mathds{1}_{\ell_1=i} E^{\t_{-k} \o, \z,\rho}_0 \left[ N_{ T_{i \rho} } (k) \right]			\right]\\
	&= \sum _{ m =0 }^\infty \sum_{ k \in [m \rho, (m+1) \rho)} \sum_{i=1}^\infty \sum _{j=0}^i
		E\,\left[ \mathds{1}_{\ell_1=i}
		E^{\t_{-k} \o, \z,\rho}_0 \left[ N_{ [T_{j\rho}, T_{(j+1) \rho})} (k) \right] \right]\,.
\end{split}
\end{equation}
 Note that, given $m>j \geq 0$ and $ k \in [m \rho, (m+1) \rho)$, it holds  $N_{ [T_{j\rho}, T_{(j+1) \rho})} (k)=0$, hence
in the last expression of  \eqref{ciocco} we can restrict to $0 \leq m  \leq j \leq i $.
Moreover note that (cf. \eqref{passo1})
\begin{equation}\label{finlandia}
E^{\t_{-k} \o, \z,\rho}_0 \left[ N_{ [T_{j\rho}, T_{(j+1) \rho})} (k) \right] \leq \frac{1}{\e(1-\e)}
E^{\t_{-k} \o, \rho}_0 \left[ N_{ [T_{j\rho}, T_{(j+1) \rho})} (k) \right]\,.
\end{equation}

Consider then the case $k \in [m \rho, (m+1) \rho)$ with $0 \leq m  \leq j \leq i $. Note 
that  $X^{\rho} _{T_{j \rho } } \in [j \rho, (j+1) \rho)$ due to the maximal length of the jump. Fix $ y \in [j \rho, (j+1) \rho)$. Then, for any environment $\o$, we have
\begin{equation}\label{leone}
E^{  \o, \rho}_y \left[ N_{ [T_{j\rho}, T_{(j+1) \rho})} (k) \right]=  E^{  \o, \rho}_y\left[ N_{  T_{(j+1) \rho} } (k) \right] \leq
\begin{cases}
g_{ \t_k \o} (0) &\text{ if } j=m\,,\\
g_{\t_{j\rho}\o } (j\rho-k )  &\text{ if } j >m\,.
\end{cases}
\end{equation}
Indeed, consider first the case $j>m$. Then $k <y$ and by Prop. \ref{prop:function_g}
\begin{equation}\label{stellina}
E^{  \o, \rho}_y\left[ N_{  T_{(j+1) \rho} } (k) \right]  \leq E^{  \o, \rho}_y\left[ N_{  T_\infty } (k) \right]= E^{  \t_y\o, \rho}_0\left[ N_{  T_\infty } (k-y) \right]  \leq g_{\t_y \o} ( y-k)
\end{equation}
Write
$y= j \rho+ \ell$ and $\o':= \t_{j \rho} \o$. Then we have \[
g_{\t_y \o} ( y-k) =g_{ \t_\ell  \o'}(j \rho -k+ \ell)\leq g _{\o'} (j\rho-k)=g_{\t_{j\rho}\o } (j\rho-k )  \]
 (in the last step we have used \eqref{spring2}). This proves \eqref{leone} for $j>m$.
If $j=m$   we bound (by the Markov property at the first visit of  $k$)
\[E^{  \o, \rho}_y\left[ N_{  T_{(j+1) \rho} } (k) \right]  \leq E^{  \o, \rho}_y\left[ N_{  T_\infty } (k) \right]\leq   E^{  \o, \rho}_k \left[ N_{  T_\infty } (k) \right] = E^{  \t_k\o, \rho}_0 \left[ N_{  T_\infty } (0) \right]\,.
\]
At this point  \eqref{leone} for $j=m$  follows from Prop. \ref{prop:function_g}.

The above bound \eqref{leone},
 the Markov property  and \eqref{finlandia} imply
\begin{equation}
E^{\t_{-k} \o, \z,\rho}_0 \left[ N_{ [T_{j\rho}, T_{(j+1) \rho})} (k) \right] \leq \frac{1}{\e(1-\e)} \cdot
\begin{cases}
g_{  \o} (0) &\text{ if } j=m\,,\\
g_{\t_{j\rho-k}\o } (j\rho-k )  &\text{ if } j >m\,.
\end{cases}
\end{equation}

Coming back to \eqref{ciocco} and due to the above observations
 we can bound
 \begin{equation}\label{pasqua}
H_+(\o) \leq  \sum _{k  =  0 }^\infty  E  E^{\t_{-k} \o, \z, \rho}_0 \left[ N_{T_{\ell_1 \rho}} (k)\right] \leq A(\o)+B(\o)\,,
 \end{equation}
 where (distinguishing the cases $m=j$ and $m<j$)
 \begin{align}
 & A(\o):=
  \sum_{i=1}^\infty \sum _{j=0}^i  \sum _{k \in  [j \rho, (j+1) \rho) }\label{frutta}
E\,
\left[ \mathds{1}_{\ell_1=i}\right] g_\o(0) = \rho \left ( E(\ell_1)+1\right) g_\o(0)\,,
 \\
 & B(\o):=\sum_{i=1}^\infty \sum _{j=1 }^i  \sum _{ m =0 }^{j-1}  \sum_{ k \in [m \rho, (m+1) \rho)}  E\,
\left[ \mathds{1}_{\ell_1=i}\right]  g_{\t_{j\rho-k}\o } (j\rho-k )
 \,.\nonumber
 \end{align}

 For what concerns $B(\o)$ observe that $ \sum _{i=j }^\infty E\,
\left[ \mathds{1}_{\ell_1=i}\right]  = (1-\e)^{j-1}$, hence\begin{equation}\label{orientarsi}
\begin{split}
B(\o) & \leq \sum_{j=1}^\infty (1-\e)^{j-1}  \sum _{ m =0 }^{j-1}  \sum_{ k \in [m \rho, (m+1) \rho)}   g_{\t_{j\rho-k}\o } (j\rho-k ) \\
 & = \sum_{j=1}^\infty (1-\e)^{j-1}    \sum_{ k \in [0, j \rho)}   g_{\t_{j\rho-k}\o } (j\rho-k ) =
 \sum_{j=1}^\infty (1-\e)^{j-1}    \sum_{h=1}^{j \rho}    g_{\t_h\o } (h ) \,.\end{split}
\end{equation}
Since, by  \eqref{spring2}, $g_{\t_h\o } (h )\leq g_\o(0) $, we get  that $B(\o) \leq  \sum_{j=1}^\infty (1-\e)^{j-1} j\rho     g_\o(0)= C  \rho g_\o (0)$. 
 Combining this estimate  with \eqref{frutta} we conclude that $ H_+(\o) \leq   C \rho   g_\o(0)$. 
Coming back to \eqref{federer} and \eqref{8978} we have
\begin{equation}\label{cetriolo1}
\frac{ d\bbQ^\rho}{d \bbP} (\o) \leq 
 C    \pi^1(0) \sum_{ k \leq 0} {\rm e}^{- 2 \l x_{-k} }
 F_*(\t_{-k} \o)+ C    g_\o(0) \leq    \hat C  \pi^1(0) \sum_{ k \leq 0}{\rm e}^{- 2 \l x_{-k}  }
 F_*(\t_{-k} \o)  \,.
\end{equation}
Since $x_a( \t_{-k} \o)= x_{a-k} (\o) - x_{-k} (\o) $, by definition of $F_*$ (and setting $r=i-k$)
we can write 
\begin{equation}\label{cetriolo2}
\begin{split}
\sum_{ k \leq 0}{\rm e}^{- 2 \l x_{-k}  }
 F_*(\t_{-k} \o)& =K_0 \sum _{k \leq 0} \sum _{ i \geq 0} (i+1) {\rm e}^{- 2\l x_{i-k} +(1-\l) ( x_{i-k+1}-x_{i-k})}\\
 & = 
 K_0  \sum _{r \geq 0} {\rm e} ^{- 2 \l x_r + (1-\l) (x_{r+1}-x_r) }\frac{(r+1)(r+2)}{2}\,.
 \end{split}
 \end{equation}
 As byproduct of \eqref{cetriolo1} and \eqref{cetriolo2} we get \eqref{eqn:bound_radonnikodym}. 
 
 Finally, by using \eqref{spring0} and that $x_j \geq j d$ for $j \geq 0$, we can bound $\bbE[F] \leq C \sum _{ j\geq 0} (j+2)^2 {\rm e}^{-2 \l j} \bbE[ {\rm e} ^{(1-\l) x_1 }] $.
  Since by assumption $\bbE[ {\rm e} ^{(1-\l) x_1 }] <\infty$, we conclude that $\bbE[F] < \infty$.
\end{proof}
%

\subsection{Uniform lower bound for   $d\bbQ^\rho/d\P$}\label{lo_bound}

We remark that, following the proof of Proposition 3.4 in \cite{CP}, we could easily obtain a lower bound on $d\bbQ^\rho/d\P$ which is independent of $\rho$, but which would in principle depend on the particular argument $\o$. Here we will do more: We will exhibit a lower bound that is uniform in both $\rho$ and $\o$ (see Corollary \ref{cor:rn_lower_bound}  below).

\medskip

For fixed $\o \in \O$, we denote by $Q_n^\o$  the empirical measure at time $n$ for the environment viewed from the $\rho$--walker. More precisely,  $Q_n^\o$  is a random probability measure on $\O$ defined as    
$$
Q^\o _n:=\frac 1n \sum_{j=1}^n \d _{\tau_{X^\rho _j}\omega }\,.
$$
Averaging over the paths of the walk we obtain the  probability $E^{\o,\rho}_0 [Q^\o_n(\cdot)]$.
For fixed $\o \in \O$, we define another probability measure on $\Omega$, given by
$$
R^\o _n:=\frac 1{m(n)}\sum_{j=1}^{m(n)} \d_{\tau_j\omega}\,,
$$
where $m(n):=n\cdot   \vrho /2  $ and $\vrho $ is the positive limiting speed of the truncated random walk given in \eqref{eqn:truncated_speed} (we are omitting the dependence on $\l$; the $1/2$  could be replaced by any other constant smaller than 1). 

\smallskip
We remark that $R_n^\o$ and $E^{\o,\rho}_0 [Q^\o_n(\cdot)]$ can be thought of as random variables on $(\O, \bbP)$ with values in $\cP (\O)$, the space of probability measures on $\O$ endowed with the weak topology. Note also that $\bbP, \bbQ^\rho \in \cP(\O)$.  Furthermore,  $Q_n^\o$ can be thought of as a random variable on the probability space $(\O \times\bbZ ^{\N},\,\P\otimes P_0^{\o,\rho}  ) $ with values in $\cP (\O)$.

\begin{Proposition}\label{prop:convergences} For $\bbP$--almost every  $\o \in \O$
we have that   $R^\o_n\rightarrow \P$ and $E^{\o,\rho}_0[Q^\o_n(\cdot)]\rightarrow \bbQ^\rho$  weakly 
in $\cP(\O)$.  Moreover, $\P\otimes P_0^{\o,\rho}$-a.s., we have that 
 $Q_n^\o \rightarrow \bbQ^\rho$ weakly 
in $\cP(\O)$. 
\end{Proposition}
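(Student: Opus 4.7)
The plan is to reduce each of the three weak convergences to convergence of integrals against countably many bounded continuous test functions. Since $\O = ((0,\infty)\times\R)^\Z$ is Polish, the weak topology on $\cP(\O)$ is metrizable and admits a countable convergence-determining class $\{f_k\}_{k\in\N}\subset C_b(\O)$; hence it suffices to prove, for each $k$, the claimed convergence of integrals along the appropriate sequence and then intersect $\bbP$-null (resp.\ $\bbP\otimes P_0^{\o,\rho}$-null) sets.

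For $R_n^\o \to \bbP$: by Assumption (A1) the shift $\tau$ on $(\O,\bbP)$ is measure-preserving and ergodic, so Birkhoff's ergodic theorem gives, for each $f_k$, a $\bbP$-null set $N_k$ outside of which $\frac{1}{M}\sum_{j=1}^{M} f_k(\tau_j \o) \to \bbE[f_k]$ as $M\to\infty$. Since $m(n)\to\infty$, this yields $R_n^\o(f_k)\to\bbE[f_k]$ for $\o\notin N_k$; taking $\o\notin \bigcup_k N_k$ produces the weak convergence.

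For $Q_n^\o \to \bbQ^\rho$ on the product space: by Lemma~\ref{lemma:ergodicity_q_rho}, $\bbQ^\rho$ is invariant and ergodic for the chain $(\tau_{X_n^\rho}\o)_{n\ge 0}$, so Birkhoff's theorem applied to this ergodic stationary process under $\cP^\rho_{\bbQ^\rho}$ implies that for each $k$,
\[
Q_n^\o(f_k)=\frac{1}{n}\sum_{j=1}^n f_k(\tau_{X_j^\rho}\o) \longrightarrow \bbE^\rho[f_k]
\]
outside a set $E_k$ of $\bbQ^\rho \otimes P_0^{\o,\rho}$-measure zero. Fubini shows that the $\o$-section of $E_k$ is $\bbQ^\rho$-null; by the mutual absolute continuity between $\bbP$ and $\bbQ^\rho$ recorded just after \eqref{umbria} (which follows from the strict positivity of the numerator in \eqref{umbria}), this section is also $\bbP$-null, so $E_k$ is $\bbP\otimes P_0^{\o,\rho}$-null as well. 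Intersecting over $k$ yields $Q_n^\o\to\bbQ^\rho$ weakly $\bbP\otimes P_0^{\o,\rho}$-a.s.

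For $E_0^{\o,\rho}[Q_n^\o(\cdot)]\to\bbQ^\rho$: from the previous step, for $\bbP$-a.e.\ $\o$ the convergence $Q_n^\o(f_k) \to \bbE^\rho[f_k]$ holds $P_0^{\o,\rho}$-a.s.; since $|f_k|\le\|f_k\|_\infty$, bounded convergence under $P_0^{\o,\rho}$ gives $E_0^{\o,\rho}[Q_n^\o(f_k)] \to \bbE^\rho[f_k]$ for $\bbP$-a.e.\ $\o$, and intersecting over $k$ completes the proof. The only delicate point in the whole argument is the need for $\bbP\ll\bbQ^\rho$ to transfer the $\bbQ^\rho\otimes P_0^{\o,\rho}$-a.s.\ convergence to a $\bbP\otimes P_0^{\o,\rho}$-a.s.\ one; this is available from \eqref{umbria}, whose right-hand side is strictly positive $\bbP$-a.s.
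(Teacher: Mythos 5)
Your proposal is correct and follows essentially the same route as the paper: Birkhoff's ergodic theorem for the shift gives $R_n^\o\to\bbP$, Birkhoff's theorem for the chain started from the ergodic stationary law $\bbQ^\rho$ combined with $\bbP\ll\bbQ^\rho$ (noted after \eqref{umbria}) gives the $\bbP\otimes P_0^{\o,\rho}$-a.s.\ convergence of $Q_n^\o$, and bounded (dominated) convergence yields the convergence of $E_0^{\o,\rho}[Q_n^\o(\cdot)]$. Your only addition is to make explicit the reduction to a countable convergence-determining class of test functions, which the paper leaves implicit.
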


\begin{proof}
The a.s.~convergence of $R_n^\o $ to $\bbP$  comes directly from the ergodicity of $\P$ with respect to shifts.

We claim  that 
 $Q_n^\o \rightarrow \bbQ^\rho$ weakly 
in $\cP(\O)$,   $\bbQ^\rho \otimes P_0^{\o,\rho}$-a.s. This follows from Birkhoff's ergodic theorem applied to the Markov chain $\t_{ X_n ^\rho} \o$ starting from the ergodic distribution $\bbQ^\rho$ (cf. Lemma \ref{lemma:ergodicity_q_rho}).
As already observed after equation \eqref{umbria}, $\bbP$ is absolutely continuous to $\bbQ^\rho$. Hence, due to the above claim, $Q_n^\o \rightarrow \bbQ^\rho$ weakly 
in $\cP(\O)$ also  $\bbP  \otimes P_0^{\o,\rho}$-a.s. 

Finally, the last a.s.~convergence  and the dominated convergence theorem imply that 
 $E^{\o,\rho} _0[Q^\o_n(\cdot)]\rightarrow \bbQ^\rho$  weakly 
in $\cP(\O)$, $\bbP$--a.s.
\end{proof}

\begin{Lemma}\label{lemma:finite_n_derivative} There exists  $\g>0$, depending neither on $\omega$ nor on $\rho$, such that  the following holds:
For $\P$-almost every $\omega$, there exists an $\bar n_\omega$ such that, $\forall n\geq \bar n_\omega$,
$$
\frac{E^{\o,\rho}_0 [Q^\o_n(\tau_k\o)]}{ R^\o_n(\tau_k\o)}>\gamma\,, \qquad k=1,...,m(n)\,.
$$
\end{Lemma}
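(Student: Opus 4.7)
The plan is to translate the ratio into a lower bound on $E^{\o,\rho}_0[N_{[1,n]}(k)]$, then to bound this expected visit count from below using a uniform lower bound on $E^{\o,\rho}_0[N_\infty(k)]$ supplied by Lemma~\ref{lemma:epsilon}, combined with an almost sure tail estimate coming from the LLN for $X^\rho$.

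First I would observe that assumption (A3) ensures that $\bbP$--a.s.\ the shifts $\tau_j\o$, $j\in\bbZ$, are pairwise distinct. Hence for each $k\in\{1,\dots,m(n)\}$, $R_n^\o(\tau_k\o)=1/m(n)$, whereas $E^{\o,\rho}_0[Q_n^\o(\tau_k\o)]=n^{-1}E^{\o,\rho}_0[N_{[1,n]}(k)]$, so that the ratio equals $(\vrho/2)\,E^{\o,\rho}_0[N_{[1,n]}(k)]$. Since Proposition~\ref{prop_vel} guarantees $\vrho\geq c_1>0$ uniformly in $\rho$, it is enough to find a universal constant $c_3>0$ (independent of $\rho$ and $\o$) such that $E^{\o,\rho}_0[N_{[1,n]}(k)]\geq c_3$ for every $k\in\{1,\dots,m(n)\}$ and every $n\geq \bar n_\omega$.

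For the global visit count, Lemma~\ref{lemma:epsilon} together with shift invariance yields $P^{\o,\rho}_0(X^\rho_{T_k^\rho}=k)=r^\rho_0(k)\geq 2\varepsilon$ for every $k>0$, whence $P^{\o,\rho}_0(H_k<\infty)\geq 2\varepsilon$, where $H_k:=\inf\{j\geq 0:X_j^\rho=k\}$. The renewal identity $E^{\o,\rho}_0[N_\infty(k)]=P^{\o,\rho}_0(H_k<\infty)/p_{\rm esc}^\rho(k)$, combined with $p_{\rm esc}^\rho(k)\leq 1$, then gives $E^{\o,\rho}_0[N_\infty(k)]\geq 2\varepsilon$ uniformly in $\rho,\o,k$. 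Applying the strong Markov property at $\sigma_n^k:=\inf\{j>n:X_j^\rho=k\}$ (set to $+\infty$ on the empty event) produces
\[
E^{\o,\rho}_0[N_{(n,\infty)}(k)]=P^{\o,\rho}_0(\sigma_n^k<\infty)/p_{\rm esc}^\rho(k),
\]
so subtracting and using once more $p_{\rm esc}^\rho(k)\leq 1$ together with $\{\sigma_n^k<\infty\}\subseteq\{H_k<\infty\}$,
\[
E^{\o,\rho}_0[N_{[1,n]}(k)]=\frac{P^{\o,\rho}_0(H_k<\infty)-P^{\o,\rho}_0(\sigma_n^k<\infty)}{p_{\rm esc}^\rho(k)}\geq 2\varepsilon-P^{\o,\rho}_0(\sigma_n^k<\infty).
\]

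It remains to prove that $P^{\o,\rho}_0(\sigma_n^k<\infty)$ is small, uniformly in $k\in\{1,\dots,m(n)\}$. Since $k\leq m(n)$ implies $\{\sigma_n^k<\infty\}\subseteq\{\min_{j>n}X_j^\rho\leq m(n)\}$, it suffices to show that this latter probability tends to $0$ as $n\to\infty$. By Proposition~\ref{prop_vel}, $X_j^\rho/j\to\vrho$ holds $P^{\o,\rho}_0$--a.s.; a standard pathwise argument then gives $\liminf_n(\min_{j\geq n}X_j^\rho)/n\geq\vrho>\vrho/2$, so eventually $\min_{j>n}X_j^\rho>m(n)$, $P^{\o,\rho}_0$--a.s. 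Dominated convergence yields $P^{\o,\rho}_0(\min_{j>n}X_j^\rho\leq m(n))\to 0$, and choosing $\bar n_\omega$ so that this probability is at most $\varepsilon$ for $n\geq\bar n_\omega$ delivers $E^{\o,\rho}_0[N_{[1,n]}(k)]\geq\varepsilon$, yielding the lemma with, e.g., $\gamma:=c_1\varepsilon/4$.

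The main obstacle is the uniformity (in $k\in\{1,\dots,m(n)\}$) of the tail decay. What makes this tractable is that one does not need an individual estimate on returns to each $k$: the single almost sure lower envelope $\min_{j>n}X_j^\rho\geq (3\vrho/4)n$, valid for $n$ large enough, handles every $k\leq m(n)$ at once. Since $\rho$ is fixed throughout Section~\ref{tronco}, allowing $\bar n_\omega$ to depend on $\rho$ is harmless; what is crucial is that $\gamma$ is independent of both $\o$ and $\rho$.
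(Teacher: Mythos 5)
Your proof is correct and follows essentially the same route as the paper: both reduce the ratio to $\frac{m(n)}{n}\,E^{\o,\rho}_0[N_{[1,n]}(k)]$ (using (A3) to identify the atoms $\tau_k\o$), extract the constant $2\varepsilon$ from Lemma \ref{lemma:epsilon} via a shift of the environment, and absorb the error term through the LLN of Proposition \ref{prop_vel} with the same choice $m(n)=n\,\vrho/2$, ending with $\gamma$ of order $\varepsilon c_1$. The only minor difference is that you lower-bound $E^{\o,\rho}_0[N_{[1,n]}(k)]$ through the renewal identity $E^{\o,\rho}_0[N_\infty(k)]=P^{\o,\rho}_0(H_k<\infty)/p^{\rho}_{\rm esc}(k)$ and the event $\{\min_{j>n}X_j\leq m(n)\}$, whereas the paper bounds it more directly by $P^{\o,\rho}_0(\exists\, j\leq n:\,X_j=k)\geq 2\varepsilon-P^{\o,\rho}_0(T_{m(n)}>n)$ together with $P^{\o,\rho}_0(T_{m(n)}>n)\leq P^{\o,\rho}_0(X_n<m(n))<\varepsilon$ for $n\geq\bar n_\o$; both steps are valid.
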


\begin{proof}
For all $k=1,...,m(n)$, we have
\begin{equation}\label{zanzara1}
E^{\o,\rho}_0 [Q^\o_n(\cdot)] \geq \frac 1n   P^{\o,\rho}_0 (\exists j\leq n:\,X_j=k) \d_{\tau_k\omega}\, .
\end{equation}
We claim that, for $n$ big enough and $k=1,...,m(n)$, it holds 
\begin{equation}\label{zanzara2}
P^{\o,\rho}_0 (\exists j\leq n:\,X_j=k) \geq \varepsilon,
\end{equation} 
where $\varepsilon>0$ is the same as in Lemma \ref{lemma:epsilon}.  To prove our claim, we bound
\begin{align}
P^{\o,\rho}_0 (\exists j\leq n:\,X_j=k)
	&\geq P^{\o,\rho}_0 ( X_{T_k}=k,\,T_k\leq n)\nonumber\\
	&\geq P^{\o,\rho}_0 ( X_{T_k}=k)-P^{\o,\rho}_0 ( T_k> n)\nonumber\\
	&\geq 2\varepsilon-P^{\o,\rho}_0 ( T_{m(n)}> n),\nonumber
\end{align}
where in the last line we have used Lemma \ref{lemma:epsilon}. On the other hand,  we also know, by the definition of the limiting speed, that
for almost every $\omega\in\Omega$, there exists an $\bar n_\omega$ such that, $\forall n>\bar n_\omega$, $P^{\o,\rho}_0 ( T_{m(n)}> n)  \leq  P^{\omega,\rho}_0(X_n<m(n))<\varepsilon$. This completes  the proof of the claim.

Hence,   putting together \eqref{zanzara1} and \eqref{zanzara2}, for all $n\geq \bar n_\omega$ and $k=1,...,m(n)$, we have 
$$
E^{\o,\rho}_0 [Q_n^\o (\cdot)] \geq \frac \varepsilon n \d_{\tau_k\omega}\,  .
$$
On the other hand, by definition, 
 $R^\o _n(\tau_k \omega)=\frac 1{m(n)}$  for all $k=1,...,m(n)$  and for  $\bbP$--a.a. $\o$  (since  periodic environments have $\bbP$--measure zero  by Assumption (A3)). 
It then  follows that, for all $k=1,...,m(n)$ and for $\bbP$--a.a. $\o$,  
\begin{equation}\label{radbound}
\frac{E^{\o,\rho}_0 [Q^\o_n(\tau_k\o)]}{ R^\o_n(\tau_k\o)}\geq \frac{\tfrac \varepsilon n}{\tfrac 1{m(n)}}=\frac{\varepsilon \vrho }2
\geq \frac{\varepsilon c_1 }2 =:\gamma>0,
\end{equation}
where $c_1$ is from \eqref{vunifbounds}.
Note that $\gamma$ does not depend on $\omega$.
\end{proof}

We finally need to show that the lower bound extends also to the Radon-Nikodym derivative of the limiting measures.

\begin{Corollary}\label{cor:rn_lower_bound} The Radon-Nikodym derivative $\frac{{\rm d} \bbQ^\rho}{{\rm d} \P}$ is uniformly bounded from below:
$
\frac{{\rm d} \bbQ^\rho}{{\rm d} \P}\geq \gamma
$, where $\gamma$ is from \eqref{radbound}.
\end{Corollary}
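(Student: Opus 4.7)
The plan is to upgrade the comparison $E_0^{\o,\rho}[Q_n^\o(\tau_k\o)] \geq \gamma R_n^\o(\tau_k\o)$ from Lemma \ref{lemma:finite_n_derivative} to a measure inequality $\bbQ^\rho \geq \gamma \bbP$ on $\O$, which gives the pointwise lower bound on the Radon--Nikodym derivative. First I would fix $\o$ in the full $\bbP$-measure set on which Proposition \ref{prop:convergences} delivers the weak convergences $R_n^\o \Rightarrow \bbP$ and $E_0^{\o,\rho}[Q_n^\o] \Rightarrow \bbQ^\rho$, on which Lemma \ref{lemma:finite_n_derivative} holds for all $n \geq \bar n_\o$, and on which the shifted environments $(\tau_k\o)_{k\in\Z}$ are pairwise distinct (guaranteed by (A3)). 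On this set, both $E_0^{\o,\rho}[Q_n^\o]$ and $R_n^\o$ are purely atomic measures supported on the orbit $\{\tau_k\o:k\in\Z\}$, with $R_n^\o$ assigning mass $1/m(n)$ to each atom $\tau_k\o$ with $k\in\{1,\dots,m(n)\}$ and zero mass elsewhere.

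Next, for an arbitrary non-negative $f \in C_b(\O)$, I would decompose the integral of $f$ against the atomic measure and discard the indices outside $\{1,\dots,m(n)\}$ (permitted because $f \geq 0$):
\begin{equation*}
\int f\, dE_0^{\o,\rho}[Q_n^\o] \;=\; \sum_{k\in\Z} E_0^{\o,\rho}[Q_n^\o(\{\tau_k\o\})]\, f(\tau_k\o) \;\geq\; \sum_{k=1}^{m(n)} E_0^{\o,\rho}[Q_n^\o(\{\tau_k\o\})]\, f(\tau_k\o).
\end{equation*}
Applying Lemma \ref{lemma:finite_n_derivative} termwise on the surviving sum, the right-hand side is at least
$\gamma \sum_{k=1}^{m(n)} R_n^\o(\{\tau_k\o\}) f(\tau_k\o) = \gamma \int f\, dR_n^\o$. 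This yields, for every $n \geq \bar n_\o$ and every non-negative $f \in C_b(\O)$, the inequality $\int f\, dE_0^{\o,\rho}[Q_n^\o] \geq \gamma \int f\, dR_n^\o$.

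Finally I would let $n \to \infty$ and invoke Proposition \ref{prop:convergences}: the left-hand side converges to $\int f\, d\bbQ^\rho$ and the right-hand side to $\gamma \int f\, d\bbP$, giving $\int f\, d\bbQ^\rho \geq \gamma \int f\, d\bbP$ for every non-negative $f \in C_b(\O)$. A standard approximation on the Polish environment space --- approximating $\mathds{1}_U$ from below (resp.~$\mathds{1}_C$ from above) by monotone sequences in $C_b^+(\O)$ combined with monotone/dominated convergence, together with outer regularity of finite Borel measures --- extends the bound to $\bbQ^\rho(A) \geq \gamma \bbP(A)$ for every Borel $A \subset \O$. This is equivalent to $d\bbQ^\rho/d\bbP \geq \gamma$ $\bbP$-a.s., and since $\gamma$ from \eqref{radbound} depends neither on $\o$ nor on $\rho$, the bound is uniform.

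The key technical point requiring care is the transition in the second paragraph: we need the pointwise comparison of Lemma \ref{lemma:finite_n_derivative} to integrate into a measure inequality, which is only possible because $R_n^\o$ is supported on a finite set of orbit atoms, because $f\geq 0$ allows us to discard terms outside this support, and because (A3) ensures these atoms are genuinely distinct so that the atomic masses behave as expected. After this step, the rest is a routine weak-convergence and regularity argument on $\cP(\O)$.
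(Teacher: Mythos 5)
Your proposal is correct and follows essentially the same route as the paper: the paper's proof likewise takes a nonnegative $f\in C_b(\O)$, uses Lemma \ref{lemma:finite_n_derivative} together with the fact that $R_n^\o$ is supported on $\{\t_k\o: k=1,\dots,m(n)\}$ to get $E_0^{\o,\rho}[Q_n^\o(f)]\geq \g R_n^\o(f)$ for large $n$, passes to the limit via Proposition \ref{prop:convergences}, and concludes by arbitrariness of $f$. Your write-up merely spells out in more detail the atomic decomposition (with (A3) ensuring distinct atoms) and the final passage from $\bbQ^\rho(f)\geq\g\bbP(f)$ to the Radon--Nikodym bound, which the paper leaves implicit.
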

\begin{proof}
Take any  $f\geq 0$ continuous and bounded. Lemma \ref{lemma:finite_n_derivative}  and the fact that $R^\o_n$ has support in $\{ \t_k \o\,:\,  k=1,...,m(n)\}$ guarantee
 that, for all $n$ large enough,
$$
E^{\o,\rho}_0[Q^\o_n(f)] \geq \gamma R^\o_n(f) \qquad \text{for $\bbP$--a.e.~}\o\,.
$$
Passing to the limit $n \to \infty$, and observing that, by Proposition \ref{prop:convergences}, $E^{\o,\rho}_0[Q_n^\o(f)]\to \bbQ^\rho(f)$ and $R^\o_n(f)\to \P(f)$ for $\bbP$--a.e.~$\o$, we have that
$\bbQ^\rho(f)\geq \gamma\, \P(f)$. The claim follows from the arbitrariness of $f$.
\end{proof}


\subsection{The weak limit of $\bbQ^\rho$ as $\rho \to \infty$}\label{deboluccio} 
 Recall the definition of the function $F$ given in \eqref{def_F} and of the constant $\g$ given in Corollary \ref{cor:rn_lower_bound}. 
\begin{Lemma}\label{chiave} Suppose $\bbE \bigl[ e^{(1-\l) x_1}\bigr]< \infty$. Then the  following holds:
\begin{itemize}
\item[(i)] The family of probability measures $( \bbQ^\rho) _{\rho \in \N_+} $ is tight;
\item[(ii)] Any subsequential limit $\bbQ^\infty $ of $( \bbQ^\rho) _{\rho \in \N_+} $ is absolutely continuous to $\bbP$ and
$$
0<\gamma\leq \frac{d \bbQ^\infty }{ d \bbP} \leq F \qquad \text{$\bbP$--a.s.}
$$
\end{itemize}
\end{Lemma}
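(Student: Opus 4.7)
The plan is to deduce both assertions from the two uniform bounds that we already have on $d\bbQ^\rho/d\bbP$, namely $\gamma \leq d\bbQ^\rho/d\bbP \leq F$ with $F\in L^1(\bbP)$ (Corollary \ref{cor:rn_lower_bound} and Proposition \ref{aereo}), together with the fact that the environment space $\Omega = ((0,\infty)\times\bbR)^{\bbZ}$ is Polish.

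For part (i), I would proceed by a standard domination argument. Fix $\e>0$. Since $F\in L^1(\bbP)$, dominated convergence furnishes $M>0$ with $\int_{\{F>M\}} F\,d\bbP < \e/2$. As $\bbP$ is a Borel probability on a Polish space, it is tight, so there exists a compact $K\subset\Omega$ with $\bbP(K^c)<\e/(2M)$. Then uniformly in $\rho$,
\[
\bbQ^\rho(K^c) \leq \int_{K^c} F\,d\bbP \leq M\,\bbP(K^c) + \int_{\{F>M\}} F\,d\bbP < \e,
\]
which gives tightness.

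For part (ii), let $\bbQ^\infty$ be any subsequential weak limit, $\bbQ^{\rho_n}\Rightarrow \bbQ^\infty$. The plan is to first transfer the bounds from $\bbQ^{\rho_n}$ to $\bbQ^\infty$ on open and closed sets via the Portmanteau theorem, and then extend them to all Borel sets by Radon regularity. For any open $U\subset\Omega$,
\[
\bbQ^\infty(U) \leq \liminf_n \bbQ^{\rho_n}(U) \leq \int_U F\,d\bbP,
\]
and dually for any closed $C$,
\[
\bbQ^\infty(C) \geq \limsup_n \bbQ^{\rho_n}(C) \geq \g\,\bbP(C).
\]
Since both $\bbQ^\infty$ and the finite measure $F\,d\bbP$ are outer regular on the Polish space $\Omega$, the first inequality extends to all Borel $A$: $\bbQ^\infty(A)\leq\int_A F\,d\bbP$. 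In particular $\bbQ^\infty\ll\bbP$, and its density $h=d\bbQ^\infty/d\bbP$ satisfies $h\leq F$ $\bbP$-a.s. By inner regularity (approximating any Borel set from inside by closed sets) the second inequality gives $\bbQ^\infty(A)\geq \g\,\bbP(A)$ for all Borel $A$, so $h\geq \g$ $\bbP$-a.s.

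There is no real obstacle here: both steps reduce to bookkeeping with the density bounds and standard facts about weak convergence and regularity of Radon measures. The only place where one must be slightly careful is the regularity extension from open/closed sets to all Borel sets, but this is automatic on a Polish space where all finite Borel measures are Radon.
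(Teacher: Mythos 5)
Your proposal is correct and follows essentially the same route as the paper: both parts rest on the uniform bounds $\g \leq d\bbQ^\rho/d\bbP \leq F$ with $F \in L^1(\bbP)$ (Proposition \ref{aereo} and Corollary \ref{cor:rn_lower_bound}), compactness/regularity of Borel measures on $\O$, and the Portmanteau theorem applied to open and closed sets. The only difference is organizational: you extend the inequalities $\g\,\bbP \leq \bbQ^\infty \leq F\,d\bbP$ from open/closed sets to all Borel sets in one stroke via outer/inner regularity and then read off absolute continuity and the density bounds, whereas the paper treats absolute continuity, the upper bound (via a contradiction argument on the set $\{F - d\bbQ^\infty/d\bbP < 0\}$ approximated by open sets and dominated convergence) and the lower bound as three separate steps — same ingredients, slightly less streamlined bookkeeping.
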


\begin{proof}
For proving part (i), fix an increasing  sequence of compact subsets $K_n$ exhausting all of $\O$.  Thanks to Proposition \ref{aereo} we have
\[
\bbQ^\rho ( K_n ^c) =  \bbE \left[  \frac{d\bbQ^\rho}{d \bbP} \mathds{1} _{K_n^c} \right] \leq \bbE \left[ F   \mathds{1} _{K_n^c} \right]\,.
\]
Setting $f_n := F \mathds{1} _{K_n^c}$ we have that $0 \leq f_n \leq F $ and $ f_n (\o) \to 0$ everywhere. By the dominated convergence theorem, given $\e>0$  we conclude that
$\bbQ^\rho ( K_n ^c) \leq \e$ eventually in $n$, hence the tightness.

We turn now to (ii). By Prohorov's Theorem there exists a sequence $\rho_k\to \infty$ such that   $\bbQ^{\rho_k}$ converges weakly to some probability measure $\bbQ^\infty$.  We want to prove the absolute continuity of $\bbQ^\infty$ with respect to $\bbP$.  To this aim fix a measurable set $A \subset \O$ with $\bbP(A)=0$. We need to show that $\bbQ^\infty(A)=0$. Due to \cite{B}[Thm. 1.1], for each integer $m \geq 1$  there exists an open subset $G_m $ with $A \subset G_m \subset \O$ and  $\bbP(G_m)= \bbP(G_m \setminus A) \leq 1/m$.  Due to the  Portmanteau Theorem (cf. \cite{B}[Thm. 2.1]) we conclude that
\begin{equation}\label{pugilato}
\bbQ^\infty(A)
	\leq  \bbQ^\infty(G_m)
	\leq \liminf_{\rho \to \infty} \bbQ^{\rho} (G_m)
	= \liminf_{\rho \to \infty}  \bbE \left[ \frac{d\bbQ^{\rho} }{d \bbP}  \mathds{1} _{G_m} \right]
	\leq   \bbE \left[F  \mathds{1} _{ G_m} \right] \,.
  \end{equation}
  Since the  sequence of subsets $\{G_m\}_{m \geq 1}$ can be taken decreasing and since $F \in L^1 (\bbP)$ (cf.~Prop.~\ref{aereo}),  we derive that the r.h.s.~of \eqref{pugilato} goes to zero as $m \to \infty$ by the dominated convergence theorem. This proves that $\bbQ^\infty(A)=0$, thus implying that $\bbQ^\infty \ll \bbP$.

Let us prove that $ \frac{d \bbQ^\infty}{ d \bbP} \leq F$, $\bbP$--a.s. To this aim we take $G \subset \O$ open. By the Portmanteau theorem, we have $\liminf _{\rho \to  \infty} \bbQ^\rho (G) \geq \bbQ^\infty(G)$. On the other hand, $\bbQ^\rho (G) = \bbE [\frac{d \bbQ^\rho }{ d \bbP}   \mathds{1}_G]  \leq \bbE [ F \mathds{1}_G]$. Hence
\begin{equation}\label{pizza100}
\bbE [ F \mathds{1}_G ] -\bbQ^\infty(G)
	=\bbE\Big[ \Big(F- \frac{d \bbQ^\infty}{d \bbP}\Big) \mathds{1}_G\Big]
	\geq 0
\end{equation}
for any $G $ open. Suppose by contradiction that $\bbP(A)>0$ where $A:= \{
F- \frac{d \bbQ^\infty}{ d \bbP} <0\}$. By  \cite{B}[Thm. 1.1]  there exists a decreasing sequence $(G_m)_{m \geq 1} $ of open subsets such that $A \subset G_m$ and $\bbP( G_m \setminus A) \leq 1/m$ for any $m$. The last bound implies that $\mathds{1}_{G_m \setminus A} \to 0$ in $L^1 (\bbP)$ as $m \to \infty$, hence at the cost of extracting a subsequence we can assume that $\mathds{1}_{G_m \setminus A} \to 0$ $\bbP$--a.s.~as $m \to \infty$. By applying now the dominated convergence theorem we  conclude that
 $ \lim _{m \to \infty} \bbE[(F- \frac{d \bbQ^\infty}{ d \bbP}) \mathds{1}_{G_m} ] =\bbE[(F- \frac{d \bbQ^\infty}{ d \bbP}) \mathds{1}_A] $. By definition of $A$ and since $\bbP(A) >0$,  it must be $\bbE[(F- \frac{d \bbQ^\infty}{ d \bbP}) \mathds{1}_A]<0$. On the other hand, due to \eqref{pizza100}, $\bbE[(F- \frac{d \bbQ^\infty}{ d \bbP}) \mathds{1}_{G_m} ] \geq 0$, thus leading to a contradiction.

The proof that  $ \frac{d \bbQ^\infty}{ d \bbP}\geq \g $, $\bbP$--a.s.,  follows similar arguments.  In particular, by the Portmanteau theorem, one gets that  $\g \bbP(C) \leq \bbQ^\infty(C)$ for all $C \subset \O$ closed.  Moreover, by  \cite{B}[Thm. 1.1],  for any $A \subset \O$ Borel there exists an increasing sequence $(C_m)_{m \geq 1}$ of closed sets such that $C_m \subset A$ and $\bbP( A \setminus C_m) \leq 1/m$.
\end{proof}


\section{Proof of Theorem \ref{teo1}: transience to the right }\label{proof_transience}
 By the discussion at the end of Section \ref{zetino},  it is enough to show the a.s.~transience to the right of $X^\infty_n$ and $\bbX^\infty_t$. Since the former is the jump chain associated to  the latter,  we only need to derive the a.s.~transience to the right of $X^\infty_n$.
 To this aim, it is sufficient to show that, for any $m\in\N$, there exists some $n(m,\o)<\infty$ such that $X^\infty_n>m$ for all $n\geq n(m,\o)$.

First of all notice that, by Proposition \ref{prop:function_g}, for $\bbP$--almost every $\o\in\Omega$ and $ i \in \bbZ$  we have
\begin{align}
	E_i^{\o, \infty} [N_\infty((-\infty,i])]
	&\leq \sum_{k=0}^\infty g_{ \t_i \o}(k) \nonumber\\
	& = K_0 \Big( \sum_{k=0}^\infty K_0 \pi^1 (-k)[ \t_i \o]  \Big) \cdot \Big(
 		\sum _{j=0}^\infty
		e^{-2\l x_j (\t_i \o) + (1-\l)(x_{j+1}(\t_i\o) -x_j(\t_i\o) )}\Big) , \nonumber
\end{align}
which is $\bbP-$almost surely finite  (see \eqref{spring0} and the discussion after 
Prop.~\ref{prop:function_g}). Hence
\begin{equation}\label{eccomi}
P_i^{\o,\infty}(N_\infty((-\infty,i])<\infty)=1.
\end{equation}

Now fix $m\in\N$ and consider $T_m$, the first time the random walk is larger or equal than $m$. Applying the Markov property at time $T_m$ and using \eqref{eccomi} one gets the claim.

\section{Proof of Theorem 1: The ballistic regime}\label{ballistic_part}
In this section we assume that $\bbE[ {\rm e}^{(1-\l) x_1} ] <+\infty$ and that $u: \bbR \times  \bbR  \to \bbR$ is continuous. 
Recall that $(\bbY)_{t \geq 0}$ and  $(Y_n)_{n \geq 0}$ denote the continuous time Mott random walk and the associated jump process, respectively. Recall also the definition of the Markov chains $( \bbX_t^\infty)_{ t \geq 0}$ and $(X_n^\infty)_{n \in\N}$, given in Section \ref{zetino}
and that $ \cP ^\rho _{Q}$ is the law of the process \emph{environment viewed from the $\rho$--walker} $(\tau_{X_n^\rho}\o)_{n \in\N }$ when started with some initial distribution $Q$.

Given $\rho \in \bbN_+ \cup \{+\infty\}$, by writing $(X_n ^\rho)_{n\in\N}$ as a functional of $(\tau_{X_n^\rho}\o)_{n \in\N}$  and using the ergodicity of $\bbQ^\rho$ (cf.~Lemma \ref{lemma:ergodicity_q_rho}
 and Proposition \ref{digiuno}) we get that the asymptotic velocity of $(X_n ^\rho)_{n \geq 0}$ exists $\cP^\rho _{\bbQ^\rho}$--a.s.~and therefore $\cP^\rho _{\bbP}$--a.s.~since  $\bbQ^\rho$ and $\bbP$ are mutually absolutely continuous:
\begin{equation}\label{ankara}
v_{X^\rho}(\l) := \lim _{ n \to \infty} \frac{X_n^\rho}{n}  \qquad \text{  $\cP^\rho _{\bbQ^\rho}$--a.s.  and $\cP^\rho _{\bbP}$--a.s. }
\end{equation}
Moreover, $ v_{X^\rho}(\l) $ does not depend on $\o$ and can be characterized as
  \begin{equation}\label{int_rapr}
 v_{X^\rho}(\l) :=   \E^\rho\big[E^{\o,\rho}_0[X_1]\big]=\E^\rho\Big[\sum_{m\in\Z}m\,P^{\o,\rho}_0(X_1=m)\Big],\qquad  \forall \rho \in \bbN_+\cup \{+\infty\}\,.
 \end{equation}
 Here, $\E^\rho$ denotes  the expectation with respect to $\bbQ^\rho$. 
 Recall that for $\rho < \infty$ we have also  an alternative representation for $ v_{X^\rho}(\l)$ (see Proposition \ref{prop_vel}).

  \medskip
  
We now prove that 
\begin{equation}\label{valeria}
	\lim _{\rho \to \infty} v_{X^\rho } (\l)= v_{X^\infty}(\l)\,.
\end{equation}
By the exponential decay of the jump probabilities (see \eqref{eqn:condition_c}), for all $\delta>0$ there exists $m_0\in\N$ such that, for all $\rho$,
$$
\sum_{|m|>m_0}|m| P^{\o,\rho}_0(X_1=m)<\delta \qquad\P\text{-a.s.}
$$
We now observe that, for $\rho>|m| >0$, we have  
\begin{equation}\label{uffa}\bbP^{\o,\rho}_0(X_1=m)=P^{\o,\infty}_0(X_1=m)= \frac{ c_{0,m}
 (\o)}{ \sum_{ k \in \bbZ} c_{0,k} (\o) } \,,
 \end{equation}
and the r.h.s. of \eqref{uffa}  is continuous in $\o$ due to the continuity assumption on $u$ and since $\|c_{0,k} (\cdot) \|_\infty \leq e^{-(1-\l) dk +\|u\|_\infty}$.
Since $\bbQ^\rho\xrightarrow{w}\bbQ^\infty$, it is now simple to get \eqref{valeria}.

\medskip
Finally, we also have that $ v_{X^\infty}(\l) \in [c_1,c_2] $ because of the limit \eqref{valeria} and since, by Proposition \ref{prop_vel},   $\vrho(\l)  \in (c_1,c_2)$ for suitable strictly positive constants $c_1,c_2$.

\medskip
By  the previous observations and by the second identity in \eqref{7575}, we also obtain that the limit
\begin{equation}\label{radisson}
v_Y (\l)  := \lim _{n \to \infty } \frac{Y_n}{n}
\end{equation}
exists $\cP^\infty_{\bbP}$--a.s.~and equals  $\bbE[Z_0] v_{X^\infty} (\l) $. As a consequence,  
$v_Y (\l)$
 is deterministic, finite and strictly positive.

\medskip 

By a suitable time change we can recover the LLN for $(\bbX^\infty_t)_{t \geq 0}$ from the LLN for  $(X^\infty_n)_{n \geq 0}$ as follows.
   By enlarging the probability space $( \O ^{\bbN}, \cP^\infty _{\bbQ^\infty} )$ with a product space, we introduce a sequence of i.i.d.~exponential random variables $(\beta_n)_{n \geq 0}$ of mean one, all independent from the process \textit{environment viewed from the $\infty$--walker} $(\t_{X_n ^\infty} \o)_{n\in\N}$. We call $( \O ^{\bbN}\otimes \bbR_+ ^\bbN, \bar \cP^\infty _{\bbQ^\infty} )$ the resulting probability space. Note that  $\bar \cP^\infty _{\bbQ^\infty} $  is stationary and ergodic with respect to shifts. On $( \O ^{\bbN}\otimes \bbR_+ ^\bbN, \bar \cP^\infty _{\bbQ^\infty} )$ we define the random variable 
\[ S_n:= \sum_{k=0}^{n-1} \frac{ \beta_k}{r  (\t_{ X_k^\infty} \o  )}\,, \qquad r (\o):= \pi^\infty (0)[\o]= \sum _{ k \in \bbZ} c_{0,k} (\o) \,. 
\]
We note that $r(\o)$ coincides with $r_0^\l(\o)$ of Section \ref{notazione}.
By the ergodicity of $\bar \cP^\infty _{\bbQ^\infty} $ we have
\begin{equation}\label{mosca}
\lim _{n \to \infty} \frac{S_n}{n}= \bbE^\infty \bigl[ 1/r \bigr ] \qquad \bar \cP^\infty _{\bbQ^\infty}\text{--a.s.} 
\end{equation}
Since, by Proposition \ref{digiuno}, $\bbQ ^\infty\ll \bbP$ and  $\frac{d \bbQ^\infty}{ d \bbP} \leq F $ with $F$ defined in \eqref{def_F}, using Lemma \ref{lorenzo}, Assumption (A4) and the hypothesis $\bbE[ {\rm e}^{(1-\l) Z_0} ] <+\infty$ we get 
\begin{equation}\label{stima_key}
\begin{split}
 	0< 
		\bbE^\infty \bigl[ 1/r \bigr ] & \leq K \,\bbE\Big[\frac{ \pi^1(0)}{r}  \sum_{j=0}^\infty  (j		+2)^2{\rm e}^{-2\l x_j +(1-\l) (x_{j+1}-x_{j}) } \Big]\\
 	& \leq K' \sum _{j=0}^\infty (j+2)^2  e^{- 2 \l d} \bbE[ {\rm e}^{(1-\l) Z_0} ] <+\infty\,.
 \end{split}
\end{equation}
For any $t \geq 0$  we define  $n(t)$ on  $( \O ^{\bbN}\otimes \bbR_+ ^\bbN, \bar \cP^\infty _{\bbQ^\infty} )$ as the only integer $n$  such that
$ S_n \leq t < S_{n+1}$.  By \eqref{mosca} and \eqref{stima_key} we get that 
$n(t) \to \infty$ as $ t \to \infty$, $\bar \cP^\infty _{\bbQ^\infty}\text{--a.s.}$ As a byproduct of  the above limit,   of \eqref{mosca} and  the bound 
\begin{equation}\label{alexey}
\frac{S_{n(t)}}{n(t)} \leq \frac{t}{n(t)}  < \frac{S_{n(t)+1} }{n(t)}\,,
\end{equation}
 we conclude that   
\begin{equation}\label{turchia}
\lim _{n \to \infty} \frac{n(t)}{t}= \frac{1}{\bbE^\infty \bigl[ 1/r \bigr ] }\qquad \bar \cP^\infty _{\bbQ^\infty}\text{--a.s.} 
\end{equation} 
By writing $\frac{X^\infty _{n(t)}}{t}= \frac{X^\infty _{n(t)}}{n(t)}\frac{n(t)}{t}$,  from 
\eqref{ankara} and 
\eqref{turchia} we get that 
\begin{equation}\label{nato}
\lim _{t\to  \infty} \frac{X^\infty _{n(t)}}{t}= \frac{v_{X ^\infty}(\l) }{ \bbE^\infty \bigl[ 1/r \bigr ]}\,, \qquad  \bar \cP^\infty _{\bbQ^\infty}\text{--a.s.} 
\end{equation}
At this point it is enough to observe that the process $(X^\infty _{n(t)})_{t\geq 0} $ defined on the probability space $( \O ^{\bbN}\otimes \bbR_+ ^\bbN, \bar \cP^\infty _{\bbQ^\infty} )$
has the same law as the process $(\bbX^\infty_t)_{t \geq 0}$. Using also \eqref{stima_key} and the fact that $\cP^\infty _{\bbP}\ll  \cP^\infty _{\bbQ^\infty}   $, we conclude that 
\begin{equation} \label{quasi_fatto}
v_{\bbX^\infty}(\l):=\lim _{t \to \infty } \frac{\bbX^\infty_t}{t}=
 \frac{v_{X ^\infty}(\l) }{ \bbE^\infty \bigl[ 1/r \bigr ]}\in (0,+\infty)
\end{equation}
holds $P^{\o, \infty}_0$--a.s., for $\bbP$--a.e.~$\o$.
Finally, using \eqref{7575}, we conclude that 
\begin{equation} \label{fatto}
v_{\bbY}(\l):=\lim _{t \to \infty } \frac{\bbY_t}{t}=
 \frac{\bbE( Z_0)  }{ \bbE^\infty \bigl[ 1/r \bigr ]}  v_{X ^\infty}(\l)  \in (0,+\infty)
\end{equation}
holds for almost all trajectories of the Mott random walk, for $\bbP$--a.e.~$\o$. As already observed, the r.h.s.~of \eqref{fatto} is deterministic and this concludes the proof of Theorem \ref{teo1}--(ii) and its counterpart for the jump process $(Y_n)_{n \geq 0}$
(cf.~\eqref{radisson}).

\section{Proof of Theorem \ref{teo1}: The sub-ballistic regime}\label{submarine}
First we point out that it will be sufficient to prove that $v_{X^\infty} (\l)=0$ a.s., for $\bbP$--a.e. realization of the environment $\o$:
Recall the identities \eqref{9898} and  \eqref{7575} of Section \ref{zetino}.  {By Assumptions (A1) and }  (A2), $ \lim _{ i \to \infty} \psi(i) /i= \bbE[Z_0]<\infty $,  $\bbP$--a.s.. On the other hand,  as proved in Section \ref{proof_transience}, the random walks 
$X^\infty_n$ and $\bbX^\infty_t$ are a.s.~transient to the right. As a byproduct, due to 
 \eqref{9898} and \eqref{7575},  we have  $v_Y(\l)=0$, $v_{\bbY}(\l)=0$ whenever  $v_{X^\infty} (\l)=0$, $v_{\bbX^\infty}(\l)=0$, respectively.  
 But we also have that $v_{X^\infty} (\l)=0$ implies $v_{\bbX^\infty} (\l)=0$. Indeed, the continuous time random walk $(\bbX^\infty_t)_{t \geq 0} $ is obtained from the discrete time random walk $(X^\infty_n)_{n \geq 0}$ by the rule that, when site $k$ is reached, $\bbX^\infty$ remains at $k$ for an exponential time with parameter $r_k^\l (\o)$. Since $\sup_{ k \in \bbZ, \o \in \O}r_k^\l (\o) =: C < \infty$ (cf.~Section \ref{notazione}), we can speed up $\bbX^\infty$ by  replacing  all parameters $r_k^\l (\o)$ by $C$. The resulting random walk can be realized as $t \mapsto X^\infty_{ n(t)} $ where $\bigl( n(t) \bigr)_{t \geq 0}$ is a Poisson process with intensity $C$.  Hence, its velocity is zero whenever $v_{X^\infty} (\l)=0$. 

\medskip
We  first show in Proposition \ref{scarola}  a  sufficient condition
for $v_{X^\infty} (\l)=0$. In Corollary \ref{cor:0_speed} we  prove that this condition is equivalent to  
the hypothesis \eqref{immacolata} of Theorem \ref{teo1}--(iii) and  in Corollary \ref{cor:iid_0_speed} we discuss  some stronger conditions corresponding to the last statement in  Theorem \ref{teo1}--(iii).

\begin{Proposition}\label{scarola}
Suppose that
\begin{equation}\label{eqn:0_speed_condition2}
\E\Big[ \,\Big( \sup_{z\leq 0}P^{\o,\infty}_z(X_1\geq 1)\Big)^{-1} \,\Big]=\infty.
\end{equation}
Then $v_{X^\infty }(\l)=0$.
\end{Proposition}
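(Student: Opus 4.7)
The plan is to implement the coupling sketched in the Introduction. On an enlarged probability space I will construct an i.i.d.\ $\N_+$-valued sequence $(\xi_k)_{k\geq 1}$ with $\bbE[\xi_1]<\infty$ dominating the successive overshoots of $(X^\infty_n)$, stopping times $0=T_0<T_1<T_2<\cdots$ (the successive overjump times), and geometric random variables $(S_k)_{k\geq 1}$ of parameters $s_k:=s(\tau_{M_{k-1}}\o)$ with $M_k:=\xi_1+\cdots+\xi_k$ satisfying $T_k-T_{k-1}\geq S_k$. By construction $X^\infty_n\leq M_k$ on $\{T_{k-1}\leq n<T_k\}$, so on $\{T_k\leq n<T_{k+1}\}$
\[
\frac{X^\infty_n}{n}\leq\frac{M_{k+1}}{T_k}\leq\frac{\xi_1+\cdots+\xi_{k+1}}{S_1+\cdots+S_k}.
\]
Since $M_{k+1}/k\to\bbE[\xi_1]<\infty$ by the classical SLLN, the proposition will follow as soon as $(S_1+\cdots+S_k)/k\to+\infty$ a.s.

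For the coupling I first derive a uniform exponential tail of the overshoot: for every $M\in\bbZ$, every $z\leq M$ and every $l\geq 1$,
\[
P^{\o,\infty}_z(X_1\geq M+l\mid X_1>M)\leq\frac{\sum_{j\geq M+l}c_{z,j}(\o)}{c_{z,M+1}(\o)}\leq K\,\mathrm{e}^{-d(1-\l)(l-1)},
\]
by using $c_{z,j}=\mathrm{e}^{(1+\l)x_z-(1-\l)x_j+u(E_z,E_j)}$ for $j>z$, the bound $x_j-x_{M+1}\geq d(j-M-1)$ (Assumption (A4)) and the boundedness of $u$. This allows me to pick a law $\mu$ on $\N_+$ with exponential tails (hence finite mean) dominating the overshoot uniformly in $z$ and $\o$, and to realize a Skorokhod-type coupling in which $X^\infty_{T_k}-M_{k-1}\leq\xi_k\sim\mu$. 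I further arrange $\mu(\{l\})>0$ for every $l$ large enough, ensuring aperiodicity below. The stopping times $T_k:=\inf\{n>T_{k-1}:X^\infty_n>M_{k-1}\}$ are finite a.s.\ by transience (Section \ref{proof_transience}). Whenever the walk sits at some $z\leq M_{k-1}$, the one-step overjump probability of $M_{k-1}$ equals
\[
P^{\o,\infty}_z(X_1>M_{k-1})=P^{\tau_{M_{k-1}}\o,\infty}_{z-M_{k-1}}(X_1\geq 1)\leq s(\tau_{M_{k-1}}\o),
\]
so a standard thinning with independent Bernoullis yields the desired $S_k$.

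The bulk of the argument goes into the super-linear growth of $\sum_j S_j$. Consider the Markov chain $\o_k:=\tau_{M_k}\o$ on $\O$ with kernel $\Pi(\o,\cdot)=\sum_{l\geq 1}\mu(l)\,\delta_{\tau_l\o}$. Stationarity of $\bbP$ under shifts gives $\Pi$-invariance, and Assumption (A1) combined with the aperiodicity of $\mathrm{supp}\,\mu$ (via a Bezout argument on two coprime elements of the support) transfers this to $\Pi$-ergodicity: any $\Pi$-invariant set is, up to $\bbP$-null sets, $\tau_1$-invariant and hence trivial. Birkhoff's theorem then yields, for every $\bbP$-integrable $f$,
\[
\frac{1}{k}\sum_{j=0}^{k-1}f(\tau_{M_j}\o)\xrightarrow[k\to\infty]{}\bbE[f]\qquad\text{a.s.\ on the enlarged space.}
\]
Applied to $f_L:=(1/s)\wedge L$, assumption~\eqref{eqn:0_speed_condition2} and monotone convergence give $\bbE[f_L]\uparrow+\infty$ as $L\to+\infty$.

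To transfer this to the $S_j$'s I truncate: set $S_j^L:=S_j\wedge L$. Conditionally on the environment and $(\xi_i)$ the $S_j^L$ are independent, bounded by $L$, with conditional mean
\[
\bbE\bigl[S_j^L\,\big|\,\cdot\bigr]=\frac{1-(1-s_j)^L}{s_j}\geq\tfrac{1}{2}\min\!\bigl(1/s_j,L\bigr),
\]
by an elementary calculation separating the cases $s_jL\leq 1$ and $s_jL>1$. The conditional Kolmogorov SLLN (variances bounded by $L^2$) combined with the ergodic limit above then yields
\[
\liminf_{k\to\infty}\frac{1}{k}\sum_{j=1}^{k}S_j\geq \tfrac{1}{2}\bbE[f_L]\xrightarrow[L\to\infty]{}+\infty,
\]
which is what we needed. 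The main obstacle is precisely this last step: since $1/s$ is not integrable, Birkhoff cannot be invoked directly on the $S_j$'s themselves. The truncation scheme together with the conditional SLLN is what converts the hypothesis $\bbE[1/s]=+\infty$ into the required super-linear lower bound for $\sum_j S_j$.
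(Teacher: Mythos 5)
Your argument is correct and follows the same overall strategy as the paper (which is the one sketched in the Introduction around \eqref{passatamutti}): dominate the successive overshoots by i.i.d.\ variables $\xi_k$ with exponential tails via a uniform one-step estimate of the type of Claim \ref{adriana}/\eqref{intramezzo}, dominate $T_{k+1}-T_k$ from below by geometrics $S_k$ with parameters $s(\tau_{\xi_1+\cdots+\xi_k}\o)$, and use ergodicity of the shifts along the random points $\xi_1+\cdots+\xi_k$ together with $\bbE[1/s]=\infty$. Where you genuinely deviate is in the two supporting lemmas. First, you prove ergodicity of $(\tau_{\xi_1+\cdots+\xi_k}\o)_k$ by viewing it as a stationary Markov chain with kernel $\Pi(\o,\cdot)=\sum_l\mu(l)\delta_{\tau_l\o}$ and showing any $\Pi$-invariant set is $\tau_1$-invariant mod null via Bezout; this replaces Lemma \ref{pasquetta} (proved in the paper through a martingale/Chung--Fuchs argument) and is a clean alternative, provided you invoke the standard equivalence between triviality of $\Pi$-invariant sets and ergodicity of the stationary chain (as in Remark \ref{caffettino}). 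Second, your truncation $S_j\wedge L$, the elementary bound $\bbE[S_j\wedge L\,|\,s_j]\geq\tfrac12\min(1/s_j,L)$, the conditional Kolmogorov SLLN and Birkhoff for the bounded function $(1/s)\wedge L$ replace the paper's Lemma \ref{luce} (stationarity and ergodicity of the sequence $(S_k)$ itself, proved via a conditional Kolmogorov $0$--$1$ law) followed by the infinite-mean ergodic theorem; your route avoids that somewhat delicate lemma at the cost of an extra truncation layer, and both are valid. Two compressions to be aware of: the bulk of the paper's proof is the careful inductive quantile-coupling construction (Lemma \ref{cicoria} and Claims \ref{strazietto}--\ref{mega_strazio}) that makes the domination $W_k\leq\xi_k$ hold \emph{conditionally on the whole past including the previous uniforms}, which you assert as ``a Skorokhod-type coupling''; your uniform one-step tail bound is indeed the right ingredient, but the consistency across steps deserves the inductive construction (or at least an appeal to it). Also, to conclude $v_{X^\infty}(\l)=0$ rather than merely $\limsup_n X_n/n\leq 0$, add the one-line remark that transience (Theorem \ref{teo1}--(i)) gives $\liminf_n X_n/n\geq 0$, as the paper does explicitly.
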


A basic tool in the proof of the above proposition will be the following coupling:
\begin{Lemma}[Quantile coupling]\label{cicoria}
For a distribution function $G$ and a value $u\in[0,1]$, define the function
$$
\phi(G,u):=\inf\{x\in\R:\,G(x)>u\}\,.
$$
 Let $F$ and $F'$ be two distribution functions such that $F(x)\leq F'(x)$ for all $x\in\R$. Take $U$ to be a uniform random variable on $[0,1]$ and let
$
Y:=\phi(F,U)
$
and
$
Y':=\phi(F',U).
$
Then   $Y$ is distributed according to $F$, $Y'$ is distributed according to $F'$ and $Y\geq Y'$ almost surely.
\end{Lemma}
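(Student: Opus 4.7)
The plan is to use the classical inverse-transform argument. First I would verify that $\phi(G,U)$ has distribution function $G$ whenever $U$ is uniform on $[0,1]$ (applied separately with $G = F$ and $G = F'$), and then deduce $Y \ge Y'$ from the pointwise domination $F \le F'$ via a set-inclusion argument at the level of infima.

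For the distributional claim, the key step will be the sandwich
\[
\{u < G(x)\}\ \subseteq\ \{\phi(G,u) \leq x\}\ \subseteq\ \{u \leq G(x)\},
\]
valid for every distribution function $G$ and every $x\in\R$, $u\in[0,1]$. The left inclusion is immediate from the definition of $\phi$: if $u < G(x)$, then $x$ itself lies in $\{x' : G(x') > u\}$, so the infimum is at most $x$. For the right inclusion I would argue by cases: if $\phi(G,u) < x$, pick $x_n$ in $\{x' : G(x') > u\}$ with $\phi(G,u) \le x_n \le x$ and conclude $G(x) \geq G(x_n) > u$; if instead $\phi(G,u) = x$, take a sequence $x_n \downarrow x$ with $G(x_n) > u$ and invoke the right-continuity of $G$ to obtain $G(x) \geq u$. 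Since $U$ has a continuous law, $P(U < G(x)) = P(U \leq G(x)) = G(x)$, and the sandwich forces $P(\phi(G,U) \leq x) = G(x)$, as desired.

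For the monotonicity statement, I would use that the hypothesis $F(x) \leq F'(x)$ for every $x$ gives, for every outcome $\omega$, the pointwise inclusion
\[
\{x : F(x) > U(\omega)\}\ \subseteq\ \{x : F'(x) > U(\omega)\}.
\]
Since taking infima reverses inclusions, this yields $\phi(F',U(\omega)) \leq \phi(F,U(\omega))$, i.e.\ $Y'(\omega) \le Y(\omega)$, in fact \emph{surely} and not merely almost surely.

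I do not expect any substantial obstacle; the argument is essentially bookkeeping about the generalized inverse of a distribution function. The only slightly delicate point is the boundary case $u = G(x)$ in the sandwich inequality, which is resolved by the absolute continuity of the law of $U$ on $[0,1]$.
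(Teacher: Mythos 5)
Your proof is correct; all the steps check out (the sandwich $\{u<G(x)\}\subseteq\{\phi(G,u)\leq x\}\subseteq\{u\leq G(x)\}$ is valid, the right-continuity argument at the boundary case $\phi(G,u)=x$ is exactly what is needed, the atomlessness of the law of $U$ closes the gap between the two inclusions, and the monotonicity of the infimum under the set inclusion induced by $F\leq F'$ gives the pointwise ordering). The paper, however, does not argue this way: it gives no proof at all, but instead cites Thorisson's book, where the coupling is stated for the lower quantile function $\phi_q(G,u):=\inf\{x\in\R:\,G(x)\geq u\}$, and adds the one-line remark that $\phi(G,U)=\phi_q(G,U)$ almost surely, which transfers the classical statement to the version with strict inequality used here. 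So your route is genuinely different in presentation: you prove the lemma directly for the upper quantile $\phi$, making the argument self-contained and avoiding both the external reference and the (slightly delicate) a.s.\ identification of the two generalized inverses; the paper's route is shorter but leaves to the reader the verification that $\phi$ and $\phi_q$ agree off a Lebesgue-null set of $u$'s, namely the at most countable set of values taken by $G$ on its flat pieces. The only cosmetic caveat in your write-up is the degenerate values $u\in\{0,1\}$, where $\phi(G,u)$ may be $\pm\infty$; since these occur with probability zero for a uniform $U$, they affect neither the distributional identity nor the a.s.\ ordering, so nothing essential is missing.
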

The proof of the above fact can be found in \cite{T}. 
Usually, as in \cite{T}, the quantile coupling is defined with $\phi_q(G,u)$ instead of $\phi(G,u)$, where $\phi_q(G,u)$ is the quantile function $\phi_q(G,u):=\inf\{x\in\R:\,G(x)\geq u\}$. One can easily prove that $\phi(G,U)= \phi_q(G,U)$ a.s.

\begin{proof}[Proof of Proposition \ref{scarola}]
Call $F_\xi$ the distribution function of the random variable $\xi:= L + G$, where  $L\in\N$ is some constant such that
\begin{equation}\label{pesto}
{\frac {{\rm e}^{u_{\max}-u_{\min}}{\rm e}^{-(1-\l)dL}}{1- \rm{e}^{-(1- \l)d}} }  <1\,,
\end{equation}
and $G$ is a  geometric random variable with parameter $\gamma=1-{e^{-(1-\l)d}}$. 
 Note that given an integer $a$ it holds
 \begin{align}
 1- F_\xi(a)= \begin{cases}
 1 & \text{ if } a-L \leq 0\,,\\
 (1-\g)^{a-L}= e^{-(1-\l) d (a-L) }  & \text{ if } a-L \geq 1\,.
 \end{cases}
 \end{align}
 In particular, given an integer $M \geq L+2$, due to \eqref{pesto}
  we have 
 \begin{align}\label{david}
 {\frac{{\rm e}^{u_{\max}-u_{\min}}{\rm e}^{-(1-\l)d(M-1)}}{1- \rm{e}^{-(1- \l)d}}   < {\rm e}^{-(1-\l)d(M-1-L)} =
 1- F_\xi(M-1) \,.}
 \end{align}

We will now inductively construct a sequence of probability spaces $( \O\times\Z^\N\times[0,1]^n, P^{(n)})$, on which we will define some random variables.


\medskip

\noindent \underline{STEP 1}. We  first consider the space $\O\times\Z^\N\times[0,1]$, the   generic element  of which is denoted by 
 $(\o,\bar x,u_1)$.

 We introduce  a probability $P^{(1)}$ on  $\O\times\Z^\N\times[0,1]$ by the following rules. 
The marginal of $P^{(1)}$ on 
  $\O $ is $\bbP$, its  marginal on $[0,1]$ is the uniform distribution and, under $P^{(1)}$, the coordinate functions $(\o,\bar x,u_1) \mapsto \o$ and $(\o,\bar x,u_1) \mapsto u_1 $ are independent random variables. Finally,
  we require that 
  \begin{equation}\label{forza}
  P^{(1)}(X^{(1)}_\cdot \in A|\, \o, u_1)=P^{\o, \infty}_0(X_\cdot\in A|\,X_{T_1}=  \phi(F^{(1)}_\o ,u_1)  )
\end{equation} 
for any measurable set $A\subseteq \Z^\N$, where $(X^{(1)}_n)_{n\in\N}$ is the second--coordinate function $(\o,\bar x,u_1) \mapsto \bar x$ and 
\[ 
	F_\o^{(1)}(y)=P^{\o,\infty} _0(X_{T_1}\leq y)\, , \qquad T_1=\inf\{n\in\N:\,X^\infty_n>0\}.
\]
From now on we consider the space $\O\times\Z^\N\times[0,1]$  endowed with the probability $P^{(1)}$.

 It is convenient to introduce the random variables $U_1,\xi_1,W_1$ defined as follows\footnote{We will denote the first--coordinate function again by $\o$, without introducing a new symbol.}:  
$$ 
U_1(\o,\bar x,u_1):=u_1\,, \qquad 
\xi_1(\o,\bar x,u_1):=\phi(F_\xi,u_1)\,, \qquad W_1 (\o,\bar x,u_1) :=\phi(F^{(1)}_\o,u_1)\,.
$$
Note that,  by the quantile coupling (cf. Lemma \ref{cicoria}),   $\xi_1$ is distributed as $\xi$ and $W_1$ under $P^{(1)}( \,\cdot\, |\,\o)$ is distributed as $X^\infty_{T_1}$ under $P^{\o, \infty}_0$.

The interpretation to keep in mind is the following: $(X^{(1)}_n)_{n\in\N}$ plays the role of our initial random walk in environment $\o$; $W_1$ is the overshoot at time $T_1$, i.e.~how far from $0$ the random walk will land the first time it jumps beyond the point $0$;  $\xi_1$ is a positive random variable that dominates $W_1$ (see Claim \ref{strazietto}) and that is distributed like $\xi$ .

\begin{Claim}\label{adriana}
For any  integer $M \geq 1$ it holds 
\begin{equation}\label{villa}
P^{\o,\infty}_0(X_{T_1}\geq M) \leq \sup_{z\leq 0} P^{\o,\infty}_z(X_1\geq M|\,X_1\geq 1)\,.
\end{equation}
\end{Claim}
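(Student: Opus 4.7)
The plan is to decompose the event $\{X_{T_1}\geq M\}$ according to the site $z\leq 0$ from which the walk makes its \emph{successful} jump into $\{1,2,\dots\}$, and then to factor out the conditional jump probability which is the quantity we want to bound by its supremum.

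Concretely, I would write, using a disjoint partition on $\{T_1=n,\,X_{n-1}=z\}$ for $n\geq 1$ and $z\leq 0$ (with the convention that for $n=1$ only $z=0$ contributes),
\[
P^{\o,\infty}_0(X_{T_1}\geq M)=\sum_{n\geq 1}\sum_{z\leq 0} P^{\o,\infty}_0\bigl(X_1,\dots,X_{n-2}\leq 0,\,X_{n-1}=z\bigr)\,P^{\o,\infty}_z(X_1\geq M),
\]
where the first factor is obtained by applying the (simple) Markov property at time $n-1$. Then I would split the last factor as
\[
P^{\o,\infty}_z(X_1\geq M)=P^{\o,\infty}_z(X_1\geq M\mid X_1\geq 1)\cdot P^{\o,\infty}_z(X_1\geq 1),
\]
which is legitimate since $z\leq 0$ makes $\{X_1\geq 1\}\supseteq\{X_1\geq M\}$.

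Bounding $P^{\o,\infty}_z(X_1\geq M\mid X_1\geq 1)$ uniformly by the supremum appearing on the right-hand side of \eqref{villa}, the remaining weighted sum
\[
\sum_{n\geq 1}\sum_{z\leq 0} P^{\o,\infty}_0\bigl(X_1,\dots,X_{n-2}\leq 0,\,X_{n-1}=z\bigr)\,P^{\o,\infty}_z(X_1\geq 1)
\]
telescopes (again by the Markov property) to $P^{\o,\infty}_0(T_1<\infty)\leq 1$, which yields the inequality.

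I do not expect any real obstacle here: the statement is essentially a Markov-property/conditioning identity, and the key observation is simply that, at time $T_1-1$, the walk is at some site $z\leq 0$ from which it makes an overshoot jump, and overshooting past $M$ is only possible through a single such jump. The one mildly delicate point is handling the $n=1$ case (where $z=0$ and the ``past history'' is trivial) together with $n\geq 2$ in a single formula; I would just treat it as the $z=0$ summand with an empty conditioning event.
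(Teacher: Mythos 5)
Your proof is correct and follows essentially the same route as the paper: both arguments partition $\{X_{T_1}\geq M\}$ over the time of the first entrance into $[1,\infty)$ and the location $z\leq 0$ from which the overshoot jump is made, apply the Markov property at that time, and bound the conditional one-step probability $P^{\o,\infty}_z(X_1\geq M\,|\,X_1\geq 1)$ by its supremum, leaving a sum equal to $P^{\o,\infty}_0(T_1<\infty)\leq 1$. The only (immaterial) difference is that the paper conditions on the entire pre-$T_1$ trajectory $(z_1,\dots,z_{j-1})$, whereas you condition only on the last position $X_{n-1}=z$, which is just a coarser version of the same partition.
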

\begin{proof}[Proof of Claim \ref{adriana}] Given $j \geq 1$ and integers $z_1,z_2, \dots, z_{j-1} \leq 0 $ 
we denote by\\
$E(z_1, z_2,\dots, z_{j-1})$ the event $\{X^\infty_1= z_1, \dots, X^\infty_{j-1}= z_{j-1} \}$. 
Note that, by the Markov property, 
$$
\frac{
P^{\o,\infty}_0( X_j \geq M,  E(z_1, \dots, z_{j-1} ) ) 
}{
P^{\o,\infty}_0( X_j \geq 1, E(z_1, \dots, z_{j-1} ))
}
	= \frac{P^{\o,\infty}_{z_{j-1} }(X_1 \geq M) }{P^{\o,\infty}_{z_{j-1} }(X_1 \geq 1) }
	= P^{\o,\infty}_{z_{j-1} }(X_1\geq M\,|\,X_1 \geq 1) \,.
$$
By the above identity  we can write 
\begin{align}
	&P^{\o,\infty}_0(X_{T_1}\geq M)  \nonumber\\
	& = \sum_{j =1}^\infty \sum_{z_1, \dots, z_{j-1} \leq 0} P^{\o,\infty}_0\left(X_{j}\geq M\,|\, 		X_j \geq 1, E(z_1, \dots, z_{j-1})\right)P^{\o,\infty}_0\left( X_j \geq 1, E(z_1, \dots, 		z_{j-1})\right)\nonumber\\
	& \leq \sup_{z\leq 0} P^{\o,\infty}_z(X_1\geq M|\,X_1\geq 1)\sum_{j =1}^\infty \sum_{z_1, 		\dots, z_{j-1} \leq 0}  P^{\o,\infty}_0\left( X_j \geq 1\,,\;E(z_1, \dots, z_{j-1})\right) 		\nonumber\\
	& \leq  \sup_{z\leq 0} P^{\o,\infty}_z(X_1\geq M|\,X_1\geq 1)\,.\qedhere
\end{align}
\end{proof} 
\begin{Claim}\label{strazietto} The following holds:
\begin{itemize}
\item[(i)] $P^{(1)}(\xi_1\geq W_1)=1$;
\item[(ii)] $\xi_1$ is independent of $\omega$ under $P^{(1)}$;
\item[(iii)] $P^{(1)}(X^{(1)}_\cdot \in B|\,\o)=P^{\o,\infty}_0(X_\cdot \in B)$  for each measurable set $B\subset \Z^\N$.
\end{itemize}
\end{Claim}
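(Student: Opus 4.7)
The plan is to dispose of (ii) and (iii) directly from the construction and then concentrate on (i), which is the substantive part and requires the stochastic domination encoded in Lemma \ref{cicoria}. Property (ii) is immediate: $\xi_1 = \phi(F_\xi, U_1)$ is a deterministic measurable function of $U_1$ alone, and by construction of $P^{(1)}$ the coordinate $U_1$ is independent of $\o$. For (iii), I would average the defining identity \eqref{forza} over $u_1$, using that, under the uniform law of $u_1$, the random variable $W_1 = \phi(F^{(1)}_\o, U_1)$ has distribution $F^{(1)}_\o$ (i.e.~the law of $X_{T_1}$ under $P^{\o,\infty}_0$). The expression
\begin{equation*}
P^{(1)}\bigl(X^{(1)}_\cdot \in B \,\big|\, \o\bigr)
= \int_0^1 P^{\o,\infty}_0\!\bigl(X_\cdot \in B \,\big|\, X_{T_1} = \phi(F^{(1)}_\o,u_1)\bigr)\, du_1
\end{equation*}
is then recognized as $E^{\o,\infty}_0\bigl[P^{\o,\infty}_0(X_\cdot \in B \mid X_{T_1})\bigr] = P^{\o,\infty}_0(X_\cdot \in B)$.

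The real content is (i). By Lemma \ref{cicoria} it suffices to show $F_\xi(x) \leq F^{(1)}_\o(x)$ for every $x \in \bbR$ and $\bbP$--a.e.~$\o$, or equivalently (both distributions being supported on the positive integers) $P(\xi \geq M) \geq P^{\o,\infty}_0(X_{T_1} \geq M)$ for every integer $M \geq 1$. Claim \ref{adriana} reduces this to producing an upper bound for $P^{\o,\infty}_z(X_1 \geq M \mid X_1 \geq 1)$ that is uniform in $\o$ and $z \leq 0$. I would bound the denominator by its $j=1$ term,
\begin{equation*}
P^{\o,\infty}_z(X_1 \geq M \mid X_1 \geq 1) \leq \sum_{j \geq M} \frac{c_{z,j}(\o)}{c_{z,1}(\o)},
\end{equation*}
and read off from \eqref{defofcond} that
\begin{equation*}
\frac{c_{z,j}(\o)}{c_{z,1}(\o)}
= e^{-(1-\l)(x_j - x_1)\, +\, u(E_z,E_j) - u(E_z,E_1)}
\leq e^{u_{\max}-u_{\min}}\, e^{-(1-\l)(j-1)d},
\end{equation*}
where in the last step I use Assumption (A4) in the form $x_j - x_1 \geq (j-1)d$. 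The crucial feature is that in the ratio $c_{z,j}/c_{z,1}$ all dependence on $x_z$ (hence on $z$) cancels, which is why the resulting bound is uniform in $z \leq 0$. Summing the geometric series gives
\begin{equation*}
\sup_{z \leq 0} P^{\o,\infty}_z(X_1 \geq M \mid X_1 \geq 1)
\leq \frac{e^{u_{\max}-u_{\min}}\, e^{-(1-\l)(M-1)d}}{1 - e^{-(1-\l)d}}.
\end{equation*}

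To close the argument, I would compare with $P(\xi \geq M)$. For integer $M \leq L+1$, since $\xi = L + G \geq L+1$ a.s.\ we have $P(\xi \geq M) = 1$ and the desired inequality is trivial. For $M \geq L+2$, $P(\xi \geq M) = e^{-(1-\l)d(M-L-1)}$, and the strict inequality between the uniform upper bound above and this expression is precisely the content of \eqref{david}, which in turn was guaranteed by the choice \eqref{pesto} of $L$. Thus $F_\xi(M-1) \leq F^{(1)}_\o(M-1)$ for every integer $M \geq 1$, which upgrades to $F_\xi \leq F^{(1)}_\o$ pointwise on $\bbR$ because both are right-continuous step functions with jumps only at integers. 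Lemma \ref{cicoria} then yields $\xi_1 \geq W_1$ almost surely, proving (i). The main point to watch is precisely the uniformity in $z$ in the chain of estimates: the supremum in Claim \ref{adriana} tolerates no factor depending on $z$ or $\o$, and it is the cancellation of the $x_z$ factors in $c_{z,j}/c_{z,1}$ that makes the whole scheme work.
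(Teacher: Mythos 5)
Your proposal is correct and follows essentially the same route as the paper: (ii) and (iii) by construction and averaging \eqref{forza} over $u_1$, and (i) via Claim \ref{adriana}, the bound of the conditional probability by the $j=1$ term of the denominator, Assumption (A4) to sum a geometric series, the comparison \eqref{david} coming from the choice \eqref{pesto} of $L$, and finally the quantile coupling of Lemma \ref{cicoria}. Your stated inequality $F_\xi\leq F^{(1)}_\o$ is indeed the direction needed for $\xi_1\geq W_1$ and is the one the paper's computation actually establishes.
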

\begin{proof}[Proof of Claim \ref{strazietto}]
In order to show (i), we just have to prove that $F^{(1)}_\o(x)\leq F_\xi(x)$ for all $\o \in \O$ and  $x\in\R$ (in fact, it is enough to prove it for all $x\in\N$) thanks to Lemma \ref{cicoria}. To this aim, recall the definition of $L$ (see \eqref{pesto}) and notice that for all $\omega\in\Omega$ and all integers  $M \geq L+2$, one has 
\begin{align}\label{intramezzo}
	1-F_\o^{(1)}(M-1) & = P^{\o,\infty}_0(X_{T_1}\geq M)
		\leq \sup_{z\leq 0} P^{\o,\infty}_z(X_1\geq M|\,X_1\geq 1) \nonumber \\
	&= \sup_{z\leq 0}\frac{\sum_{j\geq M}{ {\rm e}^{-(1-\l)(x_j-x_z)+ u(E_z,E_j)}}} 
		{\sum_{j\geq 1}{\rm e}^{-(1-\l)(x_j-x_z)+ u(E_z,E_j)}} \leq  {\rm e}^{u_{\max}-u_{\min}}		{\sup_{z\leq 0} }   \frac{\sum_{j\geq M}{\rm e}^{-(1-\l)(x_j-x_z)}}
		{ {\rm e}^{-(1-\l)(x_1-x_z)}}  \nonumber \\
	&= {\rm e}^{u_{\max}-u_{\min}} \sum_{j\geq M}{\rm e}^{-(1-\l)(x_j-x_1)}\nonumber 			\leq { {\rm e}^{u_{\max}-u_{\min}} } \sum_{j\geq M}{\rm e}^{-(1-\l)d(j-1)}\nonumber\\ 
	&= {\rm e}^{u_{\max}-u_{\min}}  \frac{{\rm e}^{-(1-\l)d(M-1)}}{1-{\rm e}^{-(1-\l)d}} \leq 1-F_		\xi(M-1)\,,
\end{align}
where in the first line we have used Claim \ref{adriana} and in the last bound we have used \eqref{david} and the fact that $M \geq L+2$.
This proves that $F^{(1)}_\o (a)\geq F_\xi(a)$ for all $a \in \bbN$ with $a \geq L+1$. The same  inequality trivially holds also for $a\leq L$ since in this case $F_\xi(a)=0$ (because $\xi >L$).
%

Part (ii) is clear 
since $\xi_1$ is determined only by $U_1$, while $U_1$ and $\o$ are independent by construction.

 For part (iii) take some measurable set $B\subset \Z^\N$ and notice that (recalling \eqref{forza} and the independence of $\omega$ and $U-1$) 
\begin{align*}
P^{(1)}(X^{(1)}_\cdot \in B|\,\o)
&= \int_{[0,1]}P^{(1)}(X^{(1)}_\cdot \in B|\,\o,\,U_1=u_1) P^{(1)}(U_1\in{\rm d}u_1)\\
&=\int_{[0,1]}P^{\o,\infty} _0(X_\cdot \in B|\,X_{T_1}=\phi(F^{(1)}_\o ,u_1)   )\,{\rm d}u_1\\
&=\sum_{j=1}^\infty P^{\o,\infty}_0(X_\cdot \in B|\,X_{T_1}=j)P^{\o,\infty} _0(X_{T_1}=j)=P^{\o,\infty}_0(X_\cdot \in B)\,. \qedhere
\end{align*}
\end{proof}
\medskip

\noindent \underline{STEP k+1}.
Suppose  now we have achieved our construction up to step $k$. In particular, we have built   the probability $P^{(k)}$ on the space $\O\times\Z^\N\times[0,1]^{k}$ and several random variables on $(\O\times\Z^\N\times[0,1]^{k}, P^{(k)})$ that we list:
\begin{itemize}
\item 
$U_1, \dots,U_k$ are independent and uniformly distributed  random variables such that $(U_1, \dots, U_k)$ is the projection function on  $[0,1]^k$;
\item $\xi_1, \dots, \xi_k$ is defined as $\xi_j= \phi (F_\xi, U_j) $, $j=1,\dots, k$;
\item $( X_n^{(k)})_{n \geq 0}$, defined as the projection function on $\Z^\N$, 
whose law under $P^{(k)}(\cdot |\,\o) $ is $P^{\o, \infty} _0$;
\item $W_1,W_2, \ldots, W_k$  such that $P^{(k)} ( \xi_{i} \geq W_i \text{ for all } i =1, \dots,k)=1$.
\end{itemize}

\begin{figure}
  \centering
  \vspace{-2.2cm}
  \setlength{\unitlength}{0.12\textwidth}  
  \begin{picture}(5,5)(0,1)
    \put(-2,0){\includegraphics[scale=0.50]{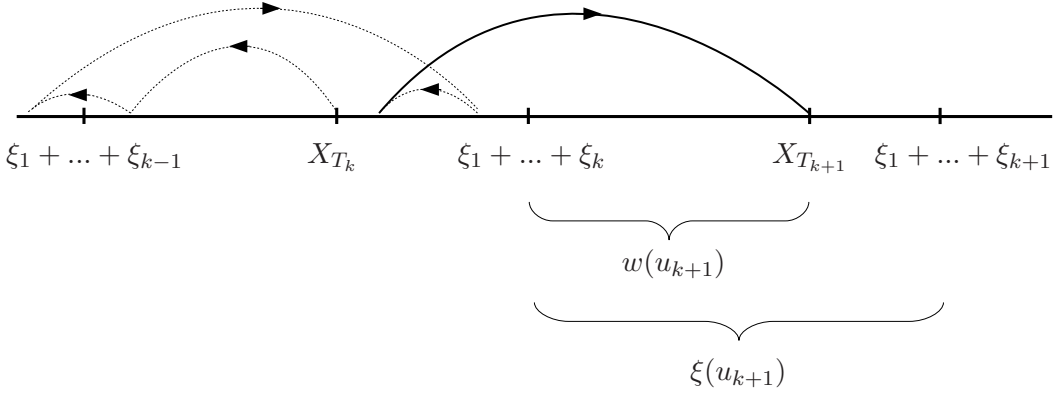}}
	 \put(0.5,2.7){$X_{T_k}$}
 	 \put(3.9,2.7){$X_{T_{k+1}}$}
 	 \put(-1.7,2.7){$\xi_1+...+\xi_{k-1}$}
 	 \put(1.6,2.7){$\xi_1+...+\xi_{k}$}
	 \put(4.65,2.7){$\xi_1+...+\xi_{k+1}$}
	 \put(2.8,1.9){$w(u_{k+1})$}
	 \put(3.3,1.1){$\xi(u_{k+1})$}
  \end{picture}
  \caption{\textit{ $T_{k+1}$ is the first time the random walk overjumps the point $\xi_1+...+\xi_k$. The overshoot $w(u_{k+1})$ is dominated by $\xi(u_{k+1})$ by construction. }}

\end{figure}

We introduce  a probability $P^{(k+1)}$ on  $\O\times\Z^\N\times[0,1]^{k+1}$ by the following rules. 
The marginal of $P^{(k+1)}$ on 
  $\O $ is $\bbP$, its  marginal on $[0,1]^{k+1}$ is the uniform distribution and, under $P^{(k+1)}$, the projection  functions $(\o,\bar x,u_1, \dots, u_{k+1}) \mapsto \o$ and $(\o,\bar x,u_1, \dots, u_{k+1}) \mapsto (u_1, \dots, u_{k+1}) $ are independent random variables. Finally,
  we require that 
  \begin{equation}\label{forza1000}
  \begin{split}
 &  P^{(k+1)}\big(X^{(k+1)}_\cdot \in A|\, \o, u_1, \dots, u_k ,u_{k+1}\big)\\& \qquad   = 
 P^{(k)} \Big( X^{(k)}_\cdot \in A|\,\o, u_1, \dots, u_k, X^{(k)}_{T_{k+1}}=\xi_1+...+\xi_k+
 \phi(F^{(k+1)}_{\o, u_1, \dots, u_k},u_{k+1})   \Big)
  \end{split}
\end{equation} 
for any measurable set $A\subseteq \Z^\N$, where 
\begin{align*}
&  F^{(k+1)}_{\o, u_1, \dots, u_k}(y):=P^{(k)}(X^{(k)}_{T_{k+1}}\leq \xi_1+...+\xi_k+y\,|\, \o, u_1, \dots, u_k ) \, , \\
& T_{k+ 1}:=\inf\{n\in\N:\, X_n^{(k)}>\xi_1+...+\xi_{k}\} \,.  
\end{align*}
Note that $T_{k+ 1}$ is a  random variable on $(\O \times \Z^\N\times[0,1]^{k}, P^{(k)})$. We stress that the conditional probability in the r.h.s. of \eqref{forza1000} has to be thought of as the 
regular conditional probability 
$P^{(k)} \big(\cdot |\,\o, u_1, \dots, u_k\big)$ further conditioned on the event $\{ X^{(k)}_{T_{k+1}}=\xi_1+...+\xi_k+
 \phi(F^{(k+1)}_{\o, u_1, \dots, u_k} ,u_{k+1})\}$.

\begin{Claim}\label{strazio1}
The marginal of $P^{(k+1)}$ on  $\O\times\Z^\N\times[0,1]^{k}$ is exactly  $P^{(k)}$. 
\end{Claim}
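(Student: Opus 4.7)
The plan is to verify the claim by checking that the $P^{(k+1)}$-law of the coordinate map $(\o, \bar x, u_1, \dots, u_{k+1}) \mapsto (\o, \bar x, u_1, \dots, u_k)$ coincides with $P^{(k)}$ on a generating family of measurable rectangles. Since the marginal of $P^{(k+1)}$ on $\O$ is $\bbP$, the marginal on $[0,1]^{k+1}$ is the uniform distribution, and $\o$ is independent of $(U_1,\dots,U_{k+1})$, the corresponding marginals on $\O$ and on $[0,1]^k$ after integrating out $u_{k+1}$ match those of $P^{(k)}$. So the whole content of the claim is the equality of the conditional distributions of $(X^{(k+1)}_n)_{n \geq 0}$ and $(X^{(k)}_n)_{n \geq 0}$ given $(\o, u_1, \dots, u_k)$.

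The key computation is then the following. Fix a measurable $A \subseteq \Z^{\N}$ and condition on $(\o, u_1, \dots, u_k)$. By the defining relation \eqref{forza1000} and Fubini,
\begin{equation*}
P^{(k+1)}\bigl(X^{(k+1)}_\cdot \in A \,\big|\, \o, u_1, \dots, u_k\bigr)
= \int_{[0,1]} P^{(k)}\Bigl(X^{(k)}_\cdot \in A \,\Big|\, \o, u_1, \dots, u_k,\ X^{(k)}_{T_{k+1}} = S_k + \phi(F^{(k+1)}_{\o,u_1,\dots,u_k}, u_{k+1})\Bigr) \, du_{k+1},
\end{equation*}
where $S_k := \xi_1 + \dots + \xi_k$ (a function of $u_1, \dots, u_k$). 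Now, by Lemma \ref{cicoria} (quantile coupling), if $U_{k+1}$ is uniform on $[0,1]$ then $\phi(F^{(k+1)}_{\o,u_1,\dots,u_k}, U_{k+1})$ has exactly distribution $F^{(k+1)}_{\o,u_1,\dots,u_k}$, which is by definition the conditional law of $X^{(k)}_{T_{k+1}} - S_k$ given $(\o,u_1,\dots,u_k)$ under $P^{(k)}$. Consequently the above display equals
\begin{equation*}
\sum_{y \in \Z} P^{(k)}\bigl(X^{(k)}_\cdot \in A \,\big|\, \o, u_1, \dots, u_k,\ X^{(k)}_{T_{k+1}} = S_k + y\bigr)\, P^{(k)}\bigl(X^{(k)}_{T_{k+1}} = S_k + y \,\big|\, \o, u_1, \dots, u_k\bigr),
\end{equation*}
which by the law of total probability is precisely $P^{(k)}(X^{(k)}_\cdot \in A \mid \o, u_1, \dots, u_k)$.

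The step I expect to need most care is the handling of the regular conditional probability in \eqref{forza1000}: one must make sure that the conditional law of $X^{(k)}_{T_{k+1}}$ given $(\o,u_1,\dots,u_k)$ admits a disintegration and that $\phi(F^{(k+1)}_{\o,u_1,\dots,u_k},\cdot)$ is a jointly measurable inverse-CDF realization of that conditional law, so that the quantile-coupling identity can be applied fibrewise in $(\o,u_1,\dots,u_k)$. Since $X^{(k)}_{T_{k+1}}$ is $\Z$-valued and the conditioning $\sigma$-algebra is standard (as $\O$ is a Polish space of marked point configurations), this is routine, but it is the only non-cosmetic verification; everything else is a consequence of the product structure of the marginals and an application of Fubini.
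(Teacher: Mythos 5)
Your proposal is correct and follows essentially the same route as the paper: integrate \eqref{forza1000} over $u_{k+1}$, use the quantile coupling (Lemma \ref{cicoria}) to identify the push-forward of the uniform variable under $\phi(F^{(k+1)}_{\o,u_1,\dots,u_k},\cdot)$ with the conditional law of the overshoot, apply the law of total probability, and conclude by noting the projections on $\O\times[0,1]^k$ agree by construction. The measurability caveat you raise about the regular conditional probability is exactly the point the paper also flags after \eqref{forza1000}, and it is indeed routine here since $X^{(k)}_{T_{k+1}}$ is $\Z$-valued.
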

\begin{proof}[Proof of Claim \ref{strazio1}]
Since  the marginal of  $P^{(k+1)}$ along the coordinate $u_{k+1}$ is the uniform distribution, by integrating \eqref{forza1000} over $u_{k+1}$,  we get 
\begin{multline}\label{sonno_bello1} P^{(k+1)}\big(X^{(k+1)}_\cdot \in A|\, \o, u_1, \dots, u_k\big)  = \\
\sum_{j =1}^\infty 
 P^{(k)} \big( X^{(k)}_\cdot \in A|\,\o, u_1, \dots, u_k, X^{(k)}_{T_{k+1}}=\xi_1+...+\xi_k+
j \big) \int _0^1 \mathds{1}\bigl(
 \phi(F^{(k+1)}_{\o, u_1, \dots, u_k},u)=j\bigr)  d u  \,. 
\end{multline}
Above we have used Lemma \ref{cicoria} to deduce that $\phi(F^{(k+1)}_{\o, u_1, \dots, u_k},u)$ has integer values. Applying again Lemma \ref{cicoria} and the definition of $F^{(k+1)}_{\o, u_1, \dots, u_k}$ we have
\begin{equation}\label{sonno_bello2}
\int_0^1  \mathds{1}\bigl(
\phi(F^{(k+1)}_{\o, u_1, \dots, u_k},u)=j\bigr) du 
  = P^{(k)}(X^{(k)}_{T_{k+1}}= \xi_1+...+\xi_k+j\,|\, \o, u_1, \dots, u_k ) \,.
 \end{equation}
Plugging  \eqref{sonno_bello2} into \eqref{sonno_bello1}, we get 
\begin{equation}\label{sonno_bello3} P^{(k+1)}\big(X^{(k+1)}_\cdot \in A|\, \o, u_1, \dots, u_k\big)    = 
 P^{(k)} \big( X^{(k)}_\cdot \in A|\,\o, u_1, \dots, u_k\big)\,.
 \end{equation}
 On the other hand, the projections of $P^{(k+1)} $ and $P^{(k)}$ on $\O \times [0,1]^k$, i.e.~along the coordinates $\o, u_1,\dots, u_k$, are equal by construction, thus concluding the proof of our claim.
\end{proof}

\medskip

Due to the above claim, any random variable $Y$ defined on $(\O \times \bbZ^{\N} \times[0,1]^{k}, P^{(k)})$ can be thought of as a random variable on $(\O \times \bbZ^{\N} \times[0,1]^{k+1}, P^{(k+1)})$, by considering the map $ ( \o, \bar x , u_1, \dots, u_k,u_{k+1}) \mapsto Y( \o, \bar x , u_1, \dots, u_k)$. With some abuse of notation, we denote by $Y$ also the last random variable. 

As  a consequence, $U_1, \dots, U_k, \xi_1, \dots, \xi_k, W_1, \dots,W_k$ can be thought as random variables  on $(\O \times \bbZ^{\N} \times[0,1]^{k+1}, P^{(k+1)})$. Finally, we introduce the new random variables $U_{k+1}, \xi_{k+1}, W_{k+1}$ on $(\O \times \bbZ^{\N} \times[0,1]^{k+1}, P^{(k+1)})$  defined as 
\begin{align*}
&  U_{k+1}( \o, \bar x , u_1, \dots, u_{k+1}):= u_{k+1} \,,\\
&  \xi_{k+1}( \o, \bar x , u_1, \dots, u_{k+1}):=\phi(F_\xi,u_{k+1})\,,\\
&  W_{k+1} ( \o, \bar x , u_1, \dots, u_{k+1}) :=\phi(F^{(k+1 )}_{\o, u_1, \dots, u_k},u_{k+1})\,.
\end{align*}


The interpretation is similar as in STEP 1: $W_{k+1}$ is the overshoot at time $T_{k+1}$, i.e.~how far from $\xi_1+...+\xi_{k}$ the random walk will land the first time it jumps beyond that point;  $\xi_{k+1}$ is a positive random variable that dominates $W_{k+1}$ (see Claim \ref{mega_strazio}) and that is distributed as $\xi$.

\begin{Claim}\label{mega_strazio}
The following three facts hold true:
\begin{itemize}
\item[(i)]   $P^{(k+1)}(\xi_{k+1}\geq W_{k+1})=1$;
\item[(ii)] $\xi_{k+1}$ is independent of $\omega, U_1,...,U_k$ under $P^{(k+1)}$;
\item[(iii)] For each measurable set $B\subset \Z^\N$,
$$
P^{(k+1)}( X^{(k+1)}_\cdot \in B|\,\o)=P^{\o, \infty} _0( X_\cdot \in B).
$$
\end{itemize}
\end{Claim}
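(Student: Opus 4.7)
The plan is to prove the three statements in reverse order of difficulty, in direct analogy with the three parts of Claim \ref{strazietto}.

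I would dispatch part (iii) first by invoking Claim \ref{strazio1}, which has just been proved above and which says that the marginal of $P^{(k+1)}$ on $\Omega\times\Z^\N\times[0,1]^k$ coincides with $P^{(k)}$. Since $X^{(k)}$ and $X^{(k+1)}$ are both nothing but the projection onto the $\Z^\N$ coordinate, the conditional law of $X^{(k+1)}$ given $\o$ under $P^{(k+1)}$ equals the conditional law of $X^{(k)}$ given $\o$ under $P^{(k)}$; the inductive hypothesis (iii) at level $k$ then identifies it with $P^{\o,\infty}_0$. Part (ii) follows directly from the construction of $P^{(k+1)}$: the variable $\xi_{k+1}=\phi(F_\xi,U_{k+1})$ is a deterministic function of $U_{k+1}$, and by definition the marginal of $P^{(k+1)}$ on $\Omega\times[0,1]^{k+1}$ is the product of $\bbP$ and the uniform law on $[0,1]^{k+1}$; hence $U_{k+1}$, and therefore $\xi_{k+1}$, is independent of $(\o,U_1,\ldots,U_k)$.

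All the content lies in part (i), which I plan to reduce to Lemma \ref{cicoria} by verifying the stochastic domination $F^{(k+1)}_{\o,u_1,\ldots,u_k}(y)\geq F_\xi(y)$ for all $y\in\R$ and $\bbP\otimes\mathrm{Unif}[0,1]^k$--almost every $(\o,u_1,\ldots,u_k)$. Unwinding the definition of $P^{(k)}$ recursively all the way back to STEP 1, one sees that, conditional on $(\o,u_1,\ldots,u_k)$, the process $X^{(k)}$ is distributed as $P^{\o,\infty}_0$ further conditioned on the overshoot events $\{X_{T_i}=\xi_1+\cdots+\xi_{i-1}+W_i\}$ for $i=1,\ldots,k$. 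Setting $S:=\xi_1+\cdots+\xi_k$ and applying the strong Markov property of $P^{\o,\infty}_0$ at the stopping time $T_k$, the conditional distribution of the overshoot $X^{(k)}_{T_{k+1}}-S$ becomes the distribution of $X_\tau-S$ under $P^{\o,\infty}_z$, where $\tau:=\inf\{n:X_n>S\}$ and $z:=\xi_1+\cdots+\xi_{k-1}+W_k$. By the inductive hypothesis (i) we have $W_k\leq\xi_k$, and hence $z\leq S$.

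The main obstacle — indeed the only nontrivial point — is now to rerun Claim \ref{adriana} at the shifted level $S$ in place of $0$. I would decompose over trajectories $X_0=z,\,X_1=z_1,\ldots,X_{j-1}=z_{j-1}\leq S$ followed by a jump to $[S+M,\infty)$, and invoke the Markov property at step $j-1$ to obtain, for every integer $M\geq 1$,
\begin{equation*}
P^{\o,\infty}_z(X_\tau\geq S+M)\leq\sup_{z'\leq S}P^{\o,\infty}_{z'}(X_1\geq S+M\mid X_1\geq S+1).
\end{equation*}
The right-hand side is then estimated exactly as in the calculation \eqref{intramezzo}: bound $u$ above by $u_{\max}$ in the numerator and below by $u_{\min}$ in the denominator, use the uniform lower bound $x_j-x_{S+1}\geq d(j-S-1)$ for $j\geq S+1$, and sum the resulting geometric series to obtain $\mathrm{e}^{u_{\max}-u_{\min}}\mathrm{e}^{-(1-\l)d(M-1)}/(1-\mathrm{e}^{-(1-\l)d})$. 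By \eqref{david}, this is dominated by $1-F_\xi(M-1)$ for every integer $M\geq L+2$, while for $M\leq L+1$ the inequality $F^{(k+1)}_{\o,u_1,\ldots,u_k}(M-1)\geq 0=F_\xi(M-1)$ is automatic. An application of Lemma \ref{cicoria} then yields $\xi_{k+1}\geq W_{k+1}$ $P^{(k+1)}$--almost surely, completing the induction step.
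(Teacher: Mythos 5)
Your proposal is correct and follows essentially the same route as the paper: parts (ii) and (iii) by construction and by the marginal consistency of $P^{(k+1)}$ with $P^{(k)}$, and part (i) by reducing, via the strong Markov property at $T_k$ and the induction hypothesis $W_k\leq\xi_k$, to a Claim \ref{adriana}--type decomposition and the estimate of \eqref{intramezzo}--\eqref{david}, then concluding with Lemma \ref{cicoria}. The only cosmetic difference is that you work in the unshifted environment with level $S=\xi_1+\cdots+\xi_k$, whereas the paper shifts the environment by $y_k$ and works with level $\xi(u_k)-w_k$; the two are equivalent.
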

\begin{proof}[Proof of Claim \ref{mega_strazio}]
The three facts can be proved
 in a similar way as  
  Claim \ref{strazietto}. We give the proof for completeness.

For Part (i) we want to show that 
$F^{(k+1)}_{\o, u_1, \dots, u_k}
(M-1)\geq F_\xi(M-1) $ for all 
$M \geq L+2$,  
with $M\in\N$. In fact, as for Claim \ref{strazietto}, this inequality can easily be extended to all $M \in \N$  and the conclusion follows.

First of all we notice that, by iteratively applying \eqref{forza1000} and 
using Claim \ref{strazietto}--(iii),
 we have
\begin{align}
& 
1- F^{(k+1)}_{\o, u_1, \dots, u_k}
(M-1)=
P^{(k)}(X^{(k)}_{T_{k+1}}\geq \xi_1+...+\xi_k+M\,|\, \o, u_1, \dots, u_k)\nonumber\\
&\qquad=P_0^{\o,\infty}(X_{\inf\{n:\,X_n> \xi(u_1)+...+\xi(u_k)\}}\geq \xi(u_1)+...+\xi(u_k)+M\,|\, D_k),\label{laborich} 
\end{align}
where we have used the shortened notation $\xi(u):=\phi(F_\xi,u)$ and $D_k$ is the event
\begin{align}
D_k:&=
\{X_{T_1}=\phi(F_\o^{(1)},u_1),X_{\inf\{n:\,X_n>\xi(u_1)\}}=\xi(u_1)+\phi(F_{\o,u_1}^{(2)},u_2)
,...,\nonumber\\
&\qquad X_{\inf\{n:\,X_n>\xi(u_1)+...+\xi(u_{k-1})\}}=\xi(u_1)+...+\xi(u_{k-1})+\phi(F_{\o,u_1,...,u_{k-1}}^{(k)},u_k)\}\,.\nonumber
\end{align}
For convenience we call
{\begin{align}
D_k'&:= \{ X_{\inf\{n:\,X_n>\xi(u_1)+...+\xi(u_{k-1})\}}=y_k \} \,, \nonumber\\
y_k&:=y_k(u_1,...,u_k):=\xi(u_1)+...+	\xi(u_{k-1})+\phi(F_{\o,u_1,...,u_{k-1}}^{(k)},u_k)\nonumber\\
w_k&=w_k(u_1,...,u_k):=\phi(F_{\o,u_1,...,u_{k-1}}^{(k)},u_k).\nonumber
\end{align} 
We also note that $\xi(u_k) \geq w_k$ $P^{(k)}$--a.s. (see the list of properties at the beginning of STEP $k+1$).  Coming back to \eqref{laborich}, by using the strong Markov Property}, we obtain (see also the proof of Claim \ref{adriana})
\begin{align}\label{regina}
P^{(k)}(X^{(k)}_{T_{k+1}}&\geq \xi_1+...+\xi_k+M\,|\, \o, u_1, \dots, u_k)\nonumber\\
	&{=P_0^{\o,\infty}\big(X_{\inf\{n : X_n >  \xi(u_1)+...+\xi(u_k)  \} }\geq \xi(u_1)+...+		\xi(u_k)+M\,|  D_k'  \big)} \nonumber \\
	&=P_0^{\tau_{y_k}\o,\infty}(X_{\inf\{n:\,X_n>\xi(u_k)  -w_k \}}	\geq\xi(u_k) 
		-w_k+M)\nonumber\\
	&=\sum_{{i\in\N_+} }P_0^{\tau_{y_k}\o,\infty}(X_{i}\geq
	\xi(u_k) 
	-w_k+M\,|\, \inf\{n:\,X_n>
	\xi(u_k) 
	-w_k\}=i)  	\nonumber\\
	& \qquad \qquad \qquad\qquad \qquad \qquad \qquad \qquad  \times P_0^{\tau_{y_k}\o,\infty}(\inf\{n:\,X_n>
	\xi(u_k) 
	\}=i)\nonumber\\
	&\leq \sup_{z\leq
	\xi(u_k) 
	-w_k}P_z^{\tau_{y_k}\o,\infty}(X_1\geq M\,|\,X_1\geq 1).
\end{align}
The last inequality follows by conditioning to  the position of the  random walk at time $i-1$.
Knowing this, 
we can proceed as in
\eqref{intramezzo} getting that the last term in \eqref{regina} is bounded from above by $1- F_\xi (M-1)$.
 This concludes the proof of Part (i).

\medskip

Part (ii) is clear by the construction of $\xi_{k+1}$. Finally, we prove Part (iii). Since the projections of $P^{(k+1)} $ and of $P^{(k)}$ on $[0,1]^k$, i.e.~along  the coordinates $u_1, \dots, u_k$, are both the uniform distribution on $[0,1]^k$, integrating \eqref{sonno_bello3} over $u_1, \dots, u_k$ we get 
$P^{(k+1)}\big(X^{(k+1)}_\cdot \in A|\, \o \big)    = 
 P^{(k)} \big(X^{(k)}_\cdot \in  A|\,\o\big)$. The claim then follows by 
 the induction hypothesis  (see the discussion at the beginning of STEP  $ k+1$).\end{proof}
 
 {Due to the results  discussed above, the list of properties at the beginning of STEP $k+1$ is valid also for $P^{(k+1)}$.}
\medskip

\noindent \underline{STEP $+\infty$}:
By the Ionescu-Tulcea Extension Theorem, there exists a measure $P^{(\infty)}$ on the space $\O\times\Z^\N\times [0,1]^\N$, random variables $\xi_1,\xi_2,...$, $W_1,W_2,...$, $T_1,T_2,...$ and a random walk $(X_n^{(\infty)})_{n\in\N}$, such that: For all measurable $A\subset \O$, $P^{(\infty)}(\o\in A)=\P(\o\in A)$; the $\xi_k$'s are i.i.d., distributed like $\xi$ and independent of $\o$; $P^{(\infty)}(X_{T_k}^{(\infty)}= \xi_1+...+\xi_{k-1}+W_k)=1$; $P^{(\infty)}(\xi_k\geq W_k)=1$; for all measurable $B\subset\Z^\N$,  $P^{(\infty)}( (X_n^{(\infty)})_{n\in\N}\in B |\,\o )=P_0^\o( (X_n^{(\infty)})_{n\in\N}\in B )$.

\medskip

We are now ready to finish the proof. Notice that, under $P^{(\infty)}(\,\cdot\,|\,\o)$, the differences $(T_{k+1}-T_{k})_{k=0,1,...}$ have a rather complicated structure, but they stochastically dominate a sequence of pretty simple objects, call them $(S_k)_{k=0,1,...}$. Each $S_k$ is a geometric random variable of parameter
\begin{equation}\label{fuso}
s_k=\sup_{z\leq 0}P^{\tau_{\xi_1+...+\xi_k}\o}_z(X_1\geq 1).
\end{equation}
In fact, due to  Lemma \ref{lemma:epsilon},
 we can imagine that for each $n\geq T_k$ the random walk ``attempts'' to overjump $\xi_1+...+\xi_k$ and manages to do so with a probability that is clearly smaller than $s_k$. By Strassen's Theorem, on an  enlarged  probability space with new probability $\tilde P^{(\infty)}$,  we can couple each $S_k$ with $T_{k+1}-T_k$ so that $S_k\leq T_{k+1}-T_k$ almost surely. Moreover, due to the strong Markov property of the random walk, all the $S_k$'s can be taken independent once we have fixed the parameters $s_k$'s.
Now note the key fact that, since the $\xi_\cdot$'s are independent of the environment and that the GCD of the values attained with positive probability by the  $\xi_\cdot$'s is $1$, the shifts $(\tau_{\xi_1+...+\xi_k}\omega)_{k\in\N}$ form a stationary ergodic sequence under $P^{(\infty)}$. We refer to   Appendix \ref{appendix:ergodic} for a proof of this fact (see Lemma \ref{pasquetta}). 
This observation allows to prove that $(S_j)_{j\in\N}$ is a stationary  ergodic sequence with respect to shifts under $\tilde P^{(\infty)}$ (see 
 Lemma \ref{luce} in Appendix  \ref{appendix:ergodic}).

We now take $\o \in\O$ such that $\lim_{n \to \infty} X_n=+\infty$ $P_0^\o$--a.s. (which holds for $\bbP$--a.a.~$\o$ by Theorem \ref{teo1}--(i)). This implies that $\liminf_{n\to\infty}\frac{X_n}{n}\geq 0$, $P_0^\o$--a.s. 

\
We can bound (see \eqref{passatamutti})  
\begin{align*}
P_0^\o\Big(\limsup_{n\to\infty}\frac{X_n}{n}>0\Big)
& = P^{(\infty)}\Big(\limsup_{n\to\infty}\frac{X_n}{n}>0\,\Big|\,\o\Big)  \leq  P^{(\infty)}\Big(\limsup_{k\to\infty}\frac{X_{T_{k+1}}}{T_k}>0\,\Big|\,\o\Big) \\
& \leq    P^{(\infty)}\Big(\limsup_{k\to\infty}\frac{{ \xi_1+...+\xi_{k+1}}}{\sum_{j=0}^{k-1} (T_{j+1}-T_{j})}
>0\,\Big|\,\o\Big)       \\
& \leq  \tilde  P^{(\infty)}\Big(\limsup_{k\to\infty}\Big(\frac{  { \sum_{i=1}^{k+1}\xi_i}}{k}\Big)
\Big(\frac{\sum_{j=0}^{k-1} S_j}{k}\Big)^{-1} >0\,\Big|\,\o\Big)  .
\end{align*}

Let us concentrate on the last line.  The arithmetic mean of {$\xi_1, \dots \x_{k+1}$} converges almost surely to $L+1 /\gamma$, the mean of $\xi$, by the law of large numbers.
The arithmetic mean of $S_0, \dots, S_{k-1}$   converges instead to $\E[S_0]$ because of the ergodic theorem (for simplicity, we write simply $\bbE$ for the expectation with respect to $\tilde  P^{(\infty)}$). Since $\E[S_0]=\E[\E[S_0|s_0]]=\E[\tfrac 1{s_0}]=\infty$ by assumption, we obtain that $P_0^\o\bigl(\limsup_{n\to\infty}\frac{X_n}{n}>0\bigr)=0$ for almost all $\o\in\O$. Taking into account that $\liminf_{n\to\infty}\frac{X_n}{n}\geq 0$, $P_0^\o$--a.s., we get that $\lim_{n\to\infty}\frac{X_n}{n}=0$, $P_0^\o$--a.s.
\end{proof}

\begin{Lemma} \label{cor:0_speed} Condition \eqref{eqn:0_speed_condition2}  is equivalent to 
\begin{equation}\label{eqn:0_speed_condition}
\E\bigl[{\rm e}^{(1-\l)Z_0-(1+\l)Z_{-1}}\bigr]=\infty\,.
\end{equation}
\end{Lemma}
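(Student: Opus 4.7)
The plan is to establish the pointwise $\bbP$-a.s.\ two-sided bound $\bigl(\sup_{z\leq 0}P^{\o,\infty}_z(X_1\geq 1)\bigr)^{-1} \asymp 1 + {\rm e}^{(1-\l)Z_0 - (1+\l)Z_{-1}}$, where $\asymp$ denotes comparability up to multiplicative constants depending only on the universal parameters $d$, $\l$, $u_{\rm min}$, $u_{\rm max}$. Taking $\bbE$-expectations then yields the desired equivalence immediately.

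For the first step, I would rewrite $P^{\o,\infty}_z(X_1\geq 1)^{-1}=1+R_z(\o)$ with $R_z:=\bigl(\sum_{k\leq 0,\,k\neq z}r^\l_{z,k}\bigr)/\bigl(\sum_{j\geq 1}r^\l_{z,j}\bigr)$, so that $\bigl(\sup_{z\leq 0} P^{\o,\infty}_z(X_1\geq 1)\bigr)^{-1}=1+\inf_{z\leq 0} R_z$. Assumption (A4) makes every series involved geometrically decaying, so each sum is comparable to its largest term up to a constant depending only on $d$, $\l$, $u_{\rm min}$, $u_{\rm max}$. This yields $\sum_{j\geq 1}r^\l_{z,j}\asymp {\rm e}^{-(1-\l)(x_1-x_z)}$, $\sum_{k<z}r^\l_{z,k}\asymp {\rm e}^{-(1+\l)Z_{z-1}}$, and, for $z<0$, $\sum_{z<k\leq 0}r^\l_{z,k}\asymp {\rm e}^{-(1-\l)Z_z}$. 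In particular, $R_0\asymp {\rm e}^{(1-\l)Z_0-(1+\l)Z_{-1}}$, which is already the exponential expression of interest.

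The main step, and the main obstacle, is showing that the infimum in $z\leq 0$ does not shrink $R_0$ by more than a universal factor, i.e.\ $\inf_{z\leq 0}R_z\asymp R_0$. The bound $\leq R_0$ is trivial. For the reverse direction, the asymptotics above give, for $z=-m$ with $m\geq 1$, $R_{-m}\asymp {\rm e}^{(1-\l)(Z_0+Z_{-1}+\cdots+Z_{-m+1})}+{\rm e}^{(1-\l)(Z_0+\cdots+Z_{-m})-(1+\l)Z_{-m-1}}$. The first of these two summands is always at least ${\rm e}^{(1-\l)Z_0}$, whereas $R_0\leq C\,{\rm e}^{(1-\l)Z_0-(1+\l)d}$ by (A4). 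Hence $R_{-m}\geq c^{*}R_0$ for some universal constant $c^{*}>0$ and every $m\geq 1$. Combining these bounds gives $\inf_{z\leq 0}R_z\asymp R_0\asymp {\rm e}^{(1-\l)Z_0-(1+\l)Z_{-1}}$, and the equivalence of the two conditions follows upon taking $\bbE$-expectation.
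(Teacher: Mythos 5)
Your argument is correct. It shares the paper's overall skeleton — reduce the supremum over $z\leq 0$ to the value at $z=0$ up to uniform constants, then evaluate the $z=0$ quantity via the dominant nearest-neighbour terms — but the comparison step is carried out by a different mechanism. The paper proves the pointwise inequality $P^\o_0(X_1\geq 1)\geq {\rm e}^{2(u_{\rm min}-u_{\rm max})}P^\o_z(X_1\geq 1)$ for every $z\leq 0$ by clearing denominators and dominating the resulting sums term by term (using only $x_j\leq 0$ for $j\leq 0$, not Assumption (A4)), and then applies the geometric estimates \eqref{rione1}--\eqref{rione2} to get $1/P^\o_0(X_1\geq 1)\asymp 1+{\rm e}^{(1-\l)Z_0-(1+\l)Z_{-1}}$. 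You instead pass to reciprocals, write $1/P^\o_z(X_1\geq 1)=1+R_z$, and show $\inf_{z\leq 0}R_z\asymp R_0$ by bounding $R_{-m}$ below through its first summand ${\rm e}^{(1-\l)(Z_0+\cdots+Z_{-m+1})}\geq {\rm e}^{(1-\l)Z_0}$ and $R_0$ above by $C{\rm e}^{(1-\l)Z_0}$; this leans on the (A4)-based comparison of each series with its largest term, exactly as in \eqref{rione1}--\eqref{rione2}, also for the reduction step. The paper's route gives an exact inequality with a constant depending only on $u$, while yours is slightly cruder (constants also involve $d$ and $\l$) but more mechanical: it avoids the ad hoc algebraic cancellation and makes transparent that the infimum of $R_z$ is governed by the $z=0$ term. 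Both yield the two-sided pointwise comparability with universal constants, from which the equivalence of the two conditions follows by taking expectations, as you say.
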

\begin{proof}
We want to show that condition \eqref{eqn:0_speed_condition} implies \eqref{eqn:0_speed_condition2}.
First of all, we claim that for all $\o\in\Omega$ and { $z\leq 0$}
we have
\begin{equation}\label{eq:overjump_1}
{P_0^\o(X_1\geq 1)\geq {\rm e}^{2 (u_{\rm min}-u_{\rm max})} P_z^\o(X_1\geq 1).}
\end{equation}
In fact,
$$
{P_0^\o(X_1\geq 1)
\geq {\rm e}^{(u_{\rm min}-u_{\rm max})}
\frac{\sum_{j\geq 1} {\rm e}^{-(1-\l) x_j}}
{\sum_{j\geq 1} {\rm e}^{-(1-\l) x_j}+\sum_{j\leq -1} {\rm e}^{(1+\l) x_j}} }
$$
and
$$
{
P_z^\o(X_1\geq 1)
\leq {\rm e}^{(u_{\rm max}-u_{\rm min})}
\frac{{\rm e}^{(1-\l) x_z}\sum_{j\geq 1} {\rm e}^{-(1-\l) x_j}}
{\sum_{j\geq z+1} {\rm e}^{-(1-\l) (x_j-x_z)}+\sum_{j\leq z-1} {\rm e}^{(1+\l) (x_j-x_z)}}.}
$$
Hence, \eqref{eq:overjump_1} is satisfied if
$$
\frac{   \sum_{j\geq 1} {\rm e}^{-(1-\l) x_j}}	{\sum_{j\geq 1} {\rm e}^{-(1-\l) x_j}+\sum_{j\leq -1} {\rm e}^{(1+\l) x_j}}
\geq\frac{{\rm e}^{(1-\l) x_z}\sum_{j\geq 1} {\rm e}^{-(1-\l) x_j}}
{\sum_{j\geq z+1} {\rm e}^{-(1-\l) (x_j-x_z)}+\sum_{j\leq z-1} {\rm e}^{(1+\l) (x_j-x_z)}},
$$
which is true if and only if
$$
{\rm e}^{-(1-\l)x_z}\Big(\sum_{j\geq z+1} {\rm e}^{-(1-\l) (x_j-x_z)}+\sum_{j\leq z-1} {\rm e}^{(1+\l) (x_j-x_z)}\Big)
\geq
\sum_{j\geq 1} {\rm e}^{-(1-\l) x_j}+\sum_{j\leq -1} {\rm e}^{(1+\l) x_j}.
$$ 
Simplifying the expression (the terms with $j\geq 1$ cancel out), the last display is equivalent to
$$
\sum_{z+1\leq j\leq -1} {\rm e}^{-(1-\l) x_j}+1+{\rm e}^{-2x_z}\sum_{j\leq z-1} {\rm e}^{(1+\l) x_j}
\geq \sum_{z+1\leq j\leq -1} {\rm e}^{(1+\l) x_j}+{\rm e}^{(1+\l) x_z}+\sum_{j\leq z-1} {\rm e}^{(1+\l) x_j}
$$
and the last inequality clearly holds since the l.h.s.~terms dominate one by one the r.h.s.~ones.

\eqref{eq:overjump_1} shows that $ P^\o_0(X_1\geq 1)\leq\sup_{z\leq 0}P^\o_z(X_1\geq 1)\leq C\cdot P^\o_0(X_1\geq 1)$ for a constant $C$ which does not depend on $\o$. On the other hand, using  estimates \eqref{rione1} and \eqref{rione2},
\begin{align*}
& P^\o_0(X_1\geq 1)=\frac{\sum_{j>1}c_{0,j}}{\sum_{j\not=0}c_{0,j}}\leq K_1\cdot\frac{c_{0,1}}{c_{0,-1}}=
{ K_1'}\cdot{\rm e}^{-(1-\l)Z_0+(1+\l)Z_{-1}}\\
&
P^\o_0(X_1\geq 1)\geq K_2\cdot\frac{c_{0,1}}{c_{0,-1}+c_{0,1}}={K_2'}\cdot\frac{{\rm e}^{-(1-\l)Z_0}}{ { {\rm e}^{-(1+\l)Z_{-1}}    } +  {\rm e}^{-(1-\l)Z_0}  }
\end{align*}
for constants { $K_1$, $K_1'$, $K_2$, $K_2'$}  which do not depend on $\o$.

Hence, we have  
$\eqref{eqn:0_speed_condition2}
\, \Longleftrightarrow\, \E\Big[\frac{1}{P^\o_0(X^\l_1\geq 1)}\Big]=\infty
\, \Longleftrightarrow \, \E[{\rm e}^{(1-\l)Z_0-(1+\l)Z_{-1}}] =\infty$.
\end{proof}

\begin{Corollary} \label{cor:iid_0_speed}
Suppose that $\bbE[Z_{-1}|Z_0]\leq C$  for some constant which does not depend on $\o$ (e.g.~if the $(Z_i)_{i\in\Z}$ are i.i.d.)  and that  
$
\E[{\rm e}^{(1-\l)Z_0}]=\infty$. Then condition \eqref{eqn:0_speed_condition} is satisfied and in particular 
 $v_{X^\infty}(\l)=0$.
\end{Corollary}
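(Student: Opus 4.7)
The plan is to reduce the statement to condition \eqref{eqn:0_speed_condition} and then invoke Proposition \ref{scarola} together with Lemma \ref{cor:0_speed}. The only nontrivial point is therefore to show that, under the hypotheses $\bbE[Z_{-1}\mid Z_0]\leq C$ and $\bbE[{\rm e}^{(1-\l)Z_0}]=\infty$, one has $\bbE[{\rm e}^{(1-\l)Z_0-(1+\l)Z_{-1}}]=\infty$.

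The key observation is that the map $x\mapsto e^{-(1+\l)x}$ is convex, so by the conditional Jensen inequality applied to the conditional law of $Z_{-1}$ given $Z_0$,
\[
\bbE\bigl[{\rm e}^{-(1+\l)Z_{-1}}\,\big|\,Z_0\bigr]\ \geq\ \exp\bigl\{-(1+\l)\,\bbE[Z_{-1}\mid Z_0]\bigr\}\ \geq\ {\rm e}^{-(1+\l)C},
\]
where in the last inequality I use the hypothesis that the conditional expectation is bounded by the deterministic constant $C$. Conditioning on $Z_0$ and using tower property then gives
\[
\bbE\bigl[{\rm e}^{(1-\l)Z_0-(1+\l)Z_{-1}}\bigr]
	\ =\ \bbE\Bigl[{\rm e}^{(1-\l)Z_0}\,\bbE\bigl[{\rm e}^{-(1+\l)Z_{-1}}\,\big|\,Z_0\bigr]\Bigr]
	\ \geq\ {\rm e}^{-(1+\l)C}\,\bbE\bigl[{\rm e}^{(1-\l)Z_0}\bigr]\ =\ +\infty,
\]
so \eqref{immacolata} (equivalently \eqref{eqn:0_speed_condition}) is verified.

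Having verified condition \eqref{eqn:0_speed_condition}, Lemma \ref{cor:0_speed} converts it into condition \eqref{eqn:0_speed_condition2}, namely $\bbE\bigl[(\sup_{z\leq 0}P^{\o,\infty}_z(X_1\geq 1))^{-1}\bigr]=\infty$, and Proposition \ref{scarola} then yields $v_{X^\infty}(\l)=0$. The i.i.d.~case is a trivial instance of the hypothesis, since then $\bbE[Z_{-1}\mid Z_0]=\bbE[Z_{-1}]=\bbE[Z_0]<\infty$ by assumption (A2). The main (and essentially only) step is the conditional Jensen estimate above; no technical obstacle is expected, as the remaining work has already been carried out in Proposition \ref{scarola} and Lemma \ref{cor:0_speed}.
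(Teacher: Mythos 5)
Your proposal is correct and is essentially the paper's own argument: the paper likewise conditions on $Z_0$, applies Jensen's inequality to get $\bbE[{\rm e}^{-(1+\l)Z_{-1}}\mid Z_0]\geq {\rm e}^{-(1+\l)\bbE[Z_{-1}\mid Z_0]}\geq {\rm e}^{-(1+\l)C}$, and concludes $\bbE[{\rm e}^{(1-\l)Z_0-(1+\l)Z_{-1}}]=\infty$, after which Lemma \ref{cor:0_speed} and Proposition \ref{scarola} give $v_{X^\infty}(\l)=0$. No gaps.
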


\begin{proof} Conditioning on $Z_0$ and using Jensen's inequality, we get 
\begin{equation*}
\begin{split}
\E[{\rm e}^{(1-\l)Z_0-(1+\l)Z_{-1}}]& = \E\bigl[{\rm e}^{(1-\l)Z_0}      \E[ {\rm e}^{-(1+\l)Z_{-1}}| Z_0]\bigr]\geq  \E\bigl[{\rm e}^{(1-\l)Z_0}      
 {\rm e}^{-(1+\l)\E[Z_{-1}  | Z_0]  }   \bigr] \\
& 
\geq {\rm e} ^{-(1+\l) C}  \E[{\rm e}^{(1-\l)Z_0}] = \infty\,.\qedhere
\end{split}\end{equation*} 
\end{proof}


\appendix
\section{Proof of Proposition \ref{digiuno}} \label{ape}
By the tightness stated in Lemma \ref{chiave}, $(\bbQ^\rho)_{\rho \in\N_+}$ admits some limit point  and any limit point $\bbQ^\infty$  is absolutely continuous to $\bbP$, with Radon--Nikodym  derivative  $\frac{ d\bbQ^\infty}{d \bbP}$ bounded by $F$ from above and by $\g$ from below.

We now show  that any limit point     is an  invariant distribution of the process given by the environment viewed from  the walker without truncation $(\tau_{X^\infty _n}\omega)_{n\in\N}$. To this end,  
let  $(\bbQ^{\rho_k})_{k \geq 1}$ be a subsequence weakly  converging to some probability  $\bbQ^\infty$  on $\O$. 
We take a bounded continuous function $f$ on $\Omega$ (without loss of generality we assume $\|f\|_\infty \leq 1$)  and we write
\begin{align}\label{eqn:b1b2b3}
\big|\E^\infty[f(\omega)]-\E^\infty E^{\o,\infty}_0[f(\tau_{X_1} \omega)]\big|
	\leq &\,\big|\E^\infty[f(\omega)]-\E^{\rho_k}[f(\omega)]\big|\nonumber\\
		&+ \big|\E^{\rho_k} E^{\o,\rho_k}_0[f(\tau_{X_1}\omega )]-\E^{\infty}E^{\o,\rho_k}			_0[f(\tau_{X_1}\omega)]\big|\nonumber\\
		&+	\big|\E^{\infty}E^{\o,\rho_k}_0[f(\tau_{X_1}\omega)] - \E^{\infty}E^{\o,\infty}				_0[f(\tau_{X_1}\omega)]\big|\nonumber\\
		=:&B_1+B_2+B_3\,.
\end{align}
Above,   $\E^\infty$ is the expectation with respect to the measure $\bbQ^\infty$ and in the second line we have used the fact that $\E^{\rho_k}$, the expectation with respect to the measure $\bbQ^{\rho_k}$, is invariant  for the process  $(\tau_{X_n^{\rho_k}}\omega)_{n\in\N}$. 
The term $B_1$ goes to zero as $k \to \infty$ since $\bbQ^{\rho_k} \to \bbQ^\infty$. 
To  deal  with term $B_2$   we observe that, by Lemma \ref{pierpaolo}, 
 for any $\delta>0$ there exists $h_0$ such that, for any $\rho\in\N_+\cup\{\infty\}$,
\begin{equation}\label{eqn:long_jumps}
P^{\o,\rho}_0(|X_1|>h_0)<\delta,\qquad \mathbb P\text{-a.s.}
\end{equation}
Then, for $\rho_k \geq  h_0$,   we write
\begin{align*}
B_2
	\leq &\Big |\E^{\rho_k}\Big[\sum_{|j|\leq h_0}P^{\o,\rho_k}_0(X_1=j)f(\tau_{j}\omega)\Big]-
		\E^{\infty}\Big[\sum_{|j|\leq h_0}P^{\o,\rho_k}_0(X_1=j)f(\tau_{j}\omega )\Big] \Big| 		+2\delta\\
	\leq &\Big |\E^{\rho_k}\Big[\sum_{|j|\leq h_0}P^{\o,\infty}_0(X_1=j)f(\tau_{j}\omega )\Big]-
		\E^{\infty}\Big[\sum_{|j|\leq h_0}P^{\o,\infty}_0(X_1=j)f(\tau_{j}\omega)\Big] \Big| \\
	&+ \E^{\rho_k}\Big[ P^{\o,\infty}_0(|X_1|>h_0)\Big] 
		+ \E^{\infty}\Big[ P^{\o,\infty}_0(|X_1|>h_0) \Big] + 2\delta\\
	\leq & \Big |\E^{\rho_k}\Big[\sum_{|j|\leq h_0}P^{\o,\infty}_0(X_1=j)f(\tau_{j}\omega )\Big]-
		\E^{\infty}\Big[\sum_{|j|\leq h_0}P^{\o,\infty}_0(X_1=j)f(\tau_{j}\omega)\Big] \Big|+4\d
\end{align*}
Note that  we have used \eqref{eqn:long_jumps}  in the first and third estimates. For the second  bound  we have used  that $h_0 \leq \rho_k$, $P^{\o,\rho_k}_0(X_1=j)= P^{\o,\infty}_0(X_1=j)$ for $0<|j| \leq \rho_k$, while $P^{\o,\rho_k}_0(X_1=0)= 1- 
\sum _{j : 0<|j-x| \leq \rho_k}   P^{\o,\infty}_0(X_1=j)$  and  $P^{\o,\infty}_0(X_1=0)= 0$ (cf. \eqref{salto}).

By the continuity assumption on $u$ and since {$\|c_{0,k} (\cdot) \|_\infty \leq e^{-(1-\l) dk +u_{\rm max}}$,}   the map $ \O \ni \o \mapsto P^{\o,\infty}_0(X_1=j)= \frac{ c_{0,j} (\o) }{ \sum _{ i \in \bbZ} c_{0,i}(\o)} \in \bbR_+$ is continuous. 
Hence, 
using   that $\bbQ^{\rho_k}$ converges to $\bbQ^\infty$ as $k\to \infty$,
 we can choose $k$ large enough so that $B_2\leq 5\delta$. $B_3$ is also smaller than $\delta$ for $k$ big enough, again by \eqref{eqn:long_jumps}. Altogether, letting $\rho \to\infty$, \eqref{eqn:b1b2b3} implies that $\bbQ^\infty$ is invariant  for $(\tau_{X^\infty _n }\omega)_{n\in\N}$ with transition mechanism induced by $P^{\o,\infty}_0$.

\smallskip

Having that $\bbQ^\infty \ll \bbP$, the ergodicity of  $\bbQ^\infty$   can be proved in the same way as Lemma \ref{lemma:ergodicity_q_rho}.



\smallskip
It remains to prove uniqueness  of the limit point. To this aim, 
take two  limit points  $\bbQ^\infty $ and $\bbQ'^\infty $ of $(\bbQ^\rho)_{ \rho \in\N_+}$. Recall that we write   $ \cP^\infty _{ \bbQ^\infty}$ and $\cP^\infty_{\bbQ'^\infty} $ for the law on the path space  $\O^{\bbZ} $ of the  Markov chains $(\tau_{X^\infty _n}\o)_{n\in\N }$, induced by $P^{\o, \infty}_0$, with initial distributions $\bbQ^\infty$ and $\bbQ'^\infty $, respectively. As proved above,  $ \cP^\infty _{ \bbQ^\infty}$ and $\cP^\infty_{\bbQ'^\infty} $  are stationary and ergodic with respect to shifts. In particular, they must be  either singular or the same. They cannot be singular, since $\bbQ^\infty$ and $\bbQ'^\infty$ are both mutually absolutely continuous with respect to $\P$ by Lemma \ref{chiave} and therefore absolutely continuous with respect to each other.  Hence,   $ \cP^\infty _{ \bbQ^\infty}$ and $\cP^\infty_{\bbQ'^\infty} $  are equal, and therefore $\bbQ^\infty =\bbQ'^\infty$.


\section{Ergodic issues } \label{appendix:ergodic}

In Lemmas \ref{pasquetta} and \ref{luce} we prove the results  we used in the proof of Proposition \ref{scarola}, see the discussion after equation \eqref{fuso}. In Lemma \ref{indi} we prove instead an assertion on assumption (A1) made in Subsection \ref{domenica}.  

\smallskip
For the first technical result, we slightly change the notation  to make it lighter: 
Take $\Omega:=\R^\Z$, the space of two-sided sequences with real values, and let $\mu$ be a stationary measure on $\Omega$, ergodic with respect to the usual shift $\tau_1$ for sequences. We indicate by $\o$ an element in $\Omega$.
Let $\Xi:=\N^\N$ and $P$ be a probability measure on it. $\eta =(\eta _i)_{i\in\N}\in\Xi$ is an i.i.d.~sequence of natural numbers under the measure $P$. We assume that the $\eta _i$'s are independent of the $\o$'s.

On the space $\Omega\times\Xi$ endowed with the product measure $\mathbb L= \mu\otimes P$, we define the transformation $T:\Omega\times\Xi\to\Omega\times\Xi$, with $T(\o,\eta )=(\tau_{\eta _1}\o,\tau_{1}\eta )$. 

\begin{Lemma}\label{pasquetta} 
Assume that the greatest common divisor of $\{k:P(\eta _1 =k)> 0\}$ equals $1$.
Assume also (just for simplicity) that the $\eta _i$'s have finite expectation.
Then, the transformation $T$ is ergodic.
\end{Lemma}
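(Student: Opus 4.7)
The plan is to establish that every $T$-invariant $f \in L^\infty(\bbL)$ is $\bbL$-a.s.\ constant. First I would verify $T$ preserves $\bbL = \mu \otimes P$ via a Fubini computation using $\tau_k$-invariance of $\mu$ for each $k \in \bbZ$ (a consequence of $\tau_1$-invariance) and $\tau_1$-invariance of the Bernoulli product $P$.

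Given $T$-invariance of $f$, I would introduce the fiber average $h(\omega) := \int f(\omega,\eta)\,P(d\eta)$. Integrating the identity $f(\omega,\eta) = f(\tau_{\eta_1}\omega,\tau_1\eta)$ against $P(d\eta)$, and using the $P$-independence of $\eta_1$ and $\tau_1\eta$, yields the mean-value equation $h(\omega) = \sum_k p_k\, h(\tau_k\omega)$, which iterates to $h(\omega) = E[h(\tau_{S_n}\omega)]$ with $S_n := \eta_1+\cdots+\eta_n$.

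The main obstacle is showing $h \equiv \bar h := \int h\,d\mu$ $\mu$-a.s. The plan is to combine Birkhoff's ergodic theorem for $\tau_1$ on $(\Omega,\mu)$, which gives $\frac{1}{M}\sum_{m=0}^{M-1} h(\tau_m\omega)\to\bar h$ $\mu$-a.s., with Blackwell's renewal theorem: since $\gcd\{k:p_k>0\}=1$ and $E[\eta_1]<\infty$, the renewal measure of $\{S_n\}$ is asymptotic to $(1/E[\eta_1])$ times counting measure on $\bbN$, so a standard truncation argument comparing Cesàro averages along $\{S_n\}$ with those along $\bbN$ should yield
$$ \frac{1}{N}\sum_{n=0}^{N-1} h(\tau_{S_n}\omega) \to \bar h \quad \text{as } N\to\infty, \qquad P\text{-a.s., for } \mu\text{-a.e.\ }\omega. $$
Bounded convergence applied to $h(\omega) = E\bigl[\frac{1}{N}\sum_n h(\tau_{S_n}\omega)\bigr]$ then forces $h \equiv \bar h$. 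The aperiodicity hypothesis $\gcd = 1$ is essential here, for otherwise $\{S_n\}$ lives on a proper sublattice and non-constant harmonic functions exist.

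Finally, to upgrade from constancy of $h$ to constancy of $f$, I would test $f$ against tensor products $\phi(\omega)\psi(\eta)$ with $\phi \in L^\infty(\mu)$ and $\psi \in L^\infty(P)$ depending only on $\eta_1,\ldots,\eta_m$. Using $T$-invariance iterated $n \geq m$ times, the substitution $\omega' = \tau_{S_n(\eta)}\omega$ (which preserves $\mu$ for each fixed $\eta$), and the fact that $\tau_n\eta$ is $P$-independent of $(\eta_1,\ldots,\eta_n)$, the correlation integral factors:
$$ \int f\cdot(\phi\otimes\psi)\,d\bbL \;=\; \bar h \int\phi\,d\mu\int\psi\,dP \;=\; \Bigl(\int f\,d\bbL\Bigr)\int(\phi\otimes\psi)\,d\bbL. $$
Since linear combinations of such cylindrical tensors are dense in $L^2(\bbL)$, $f$ is orthogonal in $L^2(\bbL)$ to the mean-zero subspace, hence $f = \int f\,d\bbL$ $\bbL$-a.s., which is ergodicity of $T$.
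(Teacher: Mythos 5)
Your argument is correct, but it follows a genuinely different route from the paper's. The paper fixes a bounded $T$-invariant $f$ and first shows constancy in the $\eta$-variable: if two driving sequences have equal partial sums at some time $n$ and agree afterwards, then $T^n$ identifies the two points, and combining this with martingale convergence of $\E_{\bbL}[f\,|\,\cF_n]$ and the Chung--Fuchs recurrence of the mean-zero difference walk $\sum_{j\le n}(\eta^{(1)}_j-\eta^{(2)}_j)$ gives $f(\o,\eta^{(1)})=f(\o,\eta^{(2)})$ a.s.; the gcd hypothesis then enters combinatorially (all sufficiently large $\ell$ are attainable as partial sums with positive probability), yielding $f(\o,\cdot)=f(\tau_\ell\o,\cdot)$ for all large $\ell$, and ergodicity of $\mu$ under $\tau_1$ finishes. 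You instead work with the fibre average $h(\o)=\int f(\o,\eta)P(d\eta)$, exploit its harmonicity $h(\o)=E[h(\tau_{S_n}\o)]$, use the gcd hypothesis analytically through Blackwell's renewal theorem to force $h\equiv\bar h$, and then upgrade to constancy of $f$ via the factorization against cylinder tensors $\phi\otimes\psi$ and $L^2$-density -- a Choquet--Deny-type argument where the paper uses recurrence of a difference walk. Both uses of the hypotheses are legitimate (the paper needs finite mean for Chung--Fuchs, you need it for Blackwell), and your step 4 computation and density argument are sound.

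One caveat in your step 3: Blackwell's theorem plus a truncation argument gives, for a Birkhoff-typical $\o$, convergence of the \emph{expectation} $E_P\bigl[\tfrac1N\sum_{n<N}h(\tau_{S_n}\o)\bigr]\to\bar h$ (since $\nu_N(m):=\sum_{n<N}P(S_n=m)$ is uniformly bounded, tends to $1/E[\eta_1]$ in the bulk $m\lesssim N E[\eta_1]$, and the edge contributions are $o(N)$ by the renewal LLN); it does not by itself deliver the $P$-a.s.\ convergence of the sampled averages that you assert, which is an ergodic theorem along the random times $S_n$ and is essentially of the same nature as the statement being proved. This overstatement is harmless to your proof: the identity $h(\o)=E_P\bigl[\tfrac1N\sum_{n<N}h(\tau_{S_n}\o)\bigr]$ only requires the expectation-level convergence (that is all your bounded-convergence step extracts anyway), so you should phrase step 3 in that weaker form.
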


\begin{Remark} The statement is not true in general without the GCD condition. Indeed, take the very simple space with only two elements, $\o_1=(\ldots, 0,1,0,1,0,1,\ldots )$ and { $\o_2=\t_1 \o_1$}, and take $\mu$ putting $1/2$ probability to each of the two elements. Then $\mu$ is ergodic with respect to $\tau_1$.
 But, if we take $\eta _i$'s that can attain only even values, 
then the sequence $(\tau_{\eta _1+...+\eta _j}\o)_{j\in\N}$ is not ergodic under { $\bbL=\mu\times P$}.
\end{Remark}
 
\begin{proof}
Take a function $ f= f(\omega, \eta )$ which is invariant under $T$ and bounded. We are going to show that $f$ is constant, $\mathbb L$-almost surely, hence proving the claim.

Assume we have, for two sequences $\eta ^{(1)}, \eta ^{(2)}$, 
\begin{equation}\label{meetatn}
\sum_{k=1}^n \eta ^{(1)}_k = \sum_{k=1}^n \eta ^{(2)}_k
\end{equation}
for some $n$ and
$\eta ^{(1)}_k = \eta ^{(2)}_k \;\hbox{ for }\; k \geq n $.
Then $T^n(\omega, \eta ^{(1)}) = T^n(\omega, \eta ^{(2)})$ and hence $f(\omega, \eta ^{(1)}) = f(\omega, \eta ^{(2)})$.

 We define $\cF_n$ as the $\s$--algebra generated by $\o, \eta _1, \dots, \eta _n$. By the above observation we get
\begin{equation}
\E_\mathbb L\bigl[ f \,|\, \cF_n \bigr] ( \o,  \eta ^{(1)})= \E_\mathbb L\bigl[ f \,|\, \cF_n \bigr] ( \o,  \eta ^{(2)})  
\end{equation}
 if \eqref{meetatn} holds true for some $n$ (where $\E_\mathbb L$ denotes the expectation with respect to the measure $\mathbb L$).
  On the other hand, $f= \lim _{n \to \infty} \E_\mathbb L\bigl[ f \,|\, \cF_n \bigr]$ $\bbL$--a.s.
As a byproduct, we get that 
$f(\omega, \eta ^{(1)}) = f(\omega, \eta ^{(2)})$ for $\mu \otimes P \otimes P$ a.e.~$(\o, \eta ^{(1)}, \eta ^{(2)})$ such that \eqref{meetatn} happens for infinitely many $n$ (note that this event has probability one due to    the Chung--Fuchs Theorem   \cite{Du} applied to the random walk $Z_n:= \sum_{j=1}^n ( \eta ^{(1)}_j- \eta ^{(2)}_j)$).   Hence,
\begin{equation}\label{lazarus}  f(\omega, \eta ^{(1)}) = f(\omega, \eta ^{(2)}) \qquad \mu \otimes P \otimes P\text{--a.s.}
\end{equation}

We  now claim that for $\mu$--a.e.~$\o$ the function $f( \o, \cdot)$ is constant $P$--a.s. 
To this aim, it is  enough to show that for $\mu$--a.e.~$\o$ the $P$--variance  of $f( \o, \cdot)$ is zero, and this follows from \eqref{lazarus} and the identity 
\[ 
\text{Var}_P( f( \o, \cdot) )= \frac{1}{2}\int dP( \eta ^{(1)}) \int dP( \eta ^{(2) } ) \left[ f(\omega, \eta ^{(1)}) - f(\omega, \eta ^{(2)})\right]^2\,.
\]

%
Now let
$ 
A_{\ell, m} : = \Big\{\eta : \sum\limits_{i=1}^m\eta _i = \ell\Big\}$.
Since $f$ is invariant under $T$, 
$f(\omega, \eta ) = f(\tau_\ell\omega, \tau_m \eta )$ for $\eta  \in A_{\ell,m}$.
If $P(A_{\ell,m}) > 0$, we conclude that $ f(\omega, \cdot) = f(\tau_{\ell} \omega, \cdot)$ $P$-almost surely, for $\mu$--a.e. $\o$.
Since the greatest common divisor of $\{k:P(\eta _1 =k)> 0\}$ equals $1$, we conclude that there is some finite $L$ such that
$f(\omega, \cdot) = f(\tau_\ell\omega, \cdot)$ for all $\ell \geq L$, for $\mu$--a.e.~$\o$.
Since the law of $\omega$ is ergodic with respect to $\tau_1$, this implies easily that $f(\cdot, \cdot)$ is constant $\bbL$-almost surely.
\end{proof}

Now recall the definition of the random sequence $(S_k)_{k \geq 0}$  introduced at  the end of the  proof of Prop. \ref{scarola}, and the notation therein. 
\begin{Lemma}\label{luce}
The random sequence $(S_k)_{k \in\N}$ is stationary and ergodic with respect to shifts.
\end{Lemma}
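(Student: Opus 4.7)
The plan is to derive the claim from Lemma \ref{pasquetta} by a two-step factorisation: first pass from the environment sequence $(\tau_{\xi_1+\cdots+\xi_k}\o)_{k\in\N}$ to the parameter sequence $(s_k)_{k\in\N}$, and then pass from $(s_k)$ to $(S_k)$ by introducing auxiliary independent uniforms.

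First I would check the hypotheses of Lemma \ref{pasquetta}. By construction $\xi=L+G$ where $G$ is geometric with parameter $\gamma=1-e^{-(1-\l)d}\in(0,1)$; in particular $P(\xi=L+1)=\gamma>0$ and $P(\xi=L+2)=\gamma(1-\gamma)>0$, so the support of $\xi$ contains two consecutive integers and hence the greatest common divisor of $\{k:P(\xi_1=k)>0\}$ equals $1$. Integrability of $\xi_1$ is trivial. Applying Lemma \ref{pasquetta} with $\mu=\bbP$ and $P$ the law of the i.i.d.~sequence $(\xi_i)_{i\in\N_+}$, the transformation $T(\o,\eta)=(\tau_{\eta_1}\o,\tau_1\eta)$ is ergodic under $\bbP\otimes P$, and the projected sequence $(\tau_{\xi_1+\cdots+\xi_k}\o)_{k\in\N}$ is stationary and ergodic with respect to shifts under $P^{(\infty)}$. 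Since $(s_k)_{k\in\N}$ is the image of this sequence under the coordinate-wise application of the measurable function $s(\o):=\sup_{z\leq 0}P_z^{\o,\infty}(X_1\geq 1)$, $(s_k)_{k\in\N}$ is also stationary and ergodic.

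Next I would realise the $S_k$'s as a deterministic measurable function of the $(s_k)$'s together with extra randomness. On the enlarged probability space underlying $\tilde P^{(\infty)}$, introduce an i.i.d.~sequence $(U_k)_{k\in\N}$ of Uniform$[0,1]$ random variables, independent of $\o$ and of the $\xi_i$'s. Setting
\[
\Psi(s,u):=\Bigl\lceil \frac{\log(1-u)}{\log(1-s)}\Bigr\rceil, \qquad S_k:=\Psi(s_k,U_k)\,,
\]
each $S_k$ is, conditionally on $s_k$, a geometric random variable of parameter $s_k$, and conditionally on the full parameter sequence $(s_k)$ the $S_k$ are independent, which is exactly the description of the $S_k$ given before \eqref{fuso}. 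Stationarity of $(s_k,U_k)_{k\in\N}$ under the shift is immediate from the stationarity of $(s_k)$, the i.i.d.~property of $(U_k)$, and the independence between the two sequences.

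The main (and only nontrivial) step is the ergodicity of the joint sequence $(s_k,U_k)_{k\in\N}$. This follows from the classical fact that the direct product of an ergodic stationary sequence with an independent Bernoulli sequence is again ergodic: Bernoulli shifts are mixing, and the product of an ergodic system with a weakly mixing system is ergodic. A direct argument is also available: if $h$ is a bounded shift-invariant function of $(s_k,U_k)_{k\in\N}$, then for $P^{(\infty)}$-a.e.~realisation of $(s_k)$ the function $h((s_k)_k,\cdot)$ is shift-invariant in $(U_k)$, hence $\tilde P^{(\infty)}$-a.s.~constant in $(U_k)$ by ergodicity of the Bernoulli shift; so $h=g((s_k)_k)$ for some shift-invariant function $g$ of $(s_k)$, which in turn is a.s.~constant by ergodicity of $(s_k)$. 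Since the map $(s,u)\mapsto \Psi(s,u)$ commutes with the shift, the sequence $(S_k)_{k\in\N}$ is a measurable factor of the ergodic stationary sequence $(s_k,U_k)_{k\in\N}$, and therefore it is itself stationary and ergodic under shifts, which is the desired conclusion.
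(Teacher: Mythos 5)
Your proof is correct in its main line, but it takes a genuinely different route from the paper for the key step. Both arguments begin the same way: stationarity and ergodicity of $(s_k)_{k\in\N}$ are obtained from Lemma \ref{pasquetta} applied to $(\o,(\xi_k)_{k\geq 1})$, the sequence $(s_k)$ being a coordinatewise measurable function of the orbit $(\tau_{\xi_1+\cdots+\xi_k}\o)_{k\in\N}$ (your explicit verification of the GCD hypothesis, via the two consecutive values $L+1$, $L+2$ in the support of $\xi$, is a welcome detail the paper leaves implicit). For the transfer from $(s_k)$ to $(S_k)$ the paper conditions on $(s_k)$: for a shift-invariant Borel set $A$ it considers $f(s_0,s_1,\dots)=\tilde P^{(\infty)}\bigl((S_k)_{k}\in A\mid s_0,s_1,\dots\bigr)$, notes that $A$ lies in the tail $\sigma$-algebra so Kolmogorov's $0$--$1$ law (applied to the conditionally independent $S_k$'s) forces $f\in\{0,1\}$, shows $f$ is shift-invariant in $(s_k)$, and concludes by ergodicity of $(s_k)$. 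You instead realise $(S_k)$ as a coordinatewise factor $S_k=\Psi(s_k,U_k)$ of the product of the ergodic sequence $(s_k)$ with an independent i.i.d.\ uniform sequence, and invoke the classical theorem that the product of an ergodic system with a (weakly) mixing one is ergodic. This is legitimate: since under $\tilde P^{(\infty)}$ the $S_k$ are, conditionally on $(s_k)$, independent geometrics with parameters $s_k$, your construction reproduces the law of the sequence $(S_k)$, and stationarity/ergodicity are properties of that law. Your route is shorter but leans on a nontrivial ergodic-theoretic theorem; the paper's is self-contained, using only the $0$--$1$ law and the conditional independence structure.

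One caveat: the ``direct argument'' you offer as an alternative is not correct as stated. Joint shift-invariance of $h$ gives $h\bigl(\theta (s_k)_k,\theta (U_k)_k\bigr)=h\bigl((s_k)_k,(U_k)_k\bigr)$; it does \emph{not} give, for fixed $(s_k)_k$, invariance of $h\bigl((s_k)_k,\cdot\bigr)$ under the shift in the $U$-variable alone — this is exactly the gap that prevents a product of two ergodic systems from being automatically ergodic. Drop that aside or replace it by a correct hands-on argument (for instance the paper's conditional $0$--$1$ law computation, or the standard proof that ergodic $\times$ weakly mixing is ergodic). Since your main step cites the correct theorem, the proof as a whole stands.
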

\begin{proof}  We first show that the sequence  $(s_k)_{k \geq 0}$ (see \eqref{fuso})   is stationary and ergodic with respect to shifts,  under $P^{(\infty)}$. Indeed,  writing  \eqref{fuso} in a compact form as $(s_k)_{k \geq 0} = G( \o,  ( \xi_k)_{k \geq 1})$, it holds $(s_k)_{k \geq 1}= G( \t_{\xi_1} \o, ( \xi_k)_{k \geq 2} )$. Then stationarity and ergodicity of  $(s_k)_{k \geq 0}$ under $P^{(\infty)}$ follow from  the stationarity and ergodicity of  $(\o,  ( \xi_k)_{k \geq 1})$  under $P^{(\infty)}$ as in   Lemma \ref{pasquetta}.

 We move to $(S_k)_{k \geq 0}$. Since  $(s_k)_{k \geq 0}$, under $P^{(\infty)}$,  is stationary, one gets easily the stationarity of $(S_k)_{k \geq 0}$ under $\tilde P^{(\infty)}$.
 Take now  a shift invariant   Borel set $A \subset \bbN ^{\bbN_0}$ (i.e. $A= \{ (x_0,x_1,\dots)  \in \bbN ^{\bbN_0}\,:\, (x_1,x_2, \dots ) \in A \}$). 
 We claim that 
 \begin{equation}\label{salsiccia} 
 \tilde P^{(\infty) }\bigl(\, (S_0, S_1, \dots ) \in A\, \bigr)  \in \{0, 1\} \,. 
 \end{equation}
We define $f: \bbN^{\bbN_0} \to \bbR$ as the Borel function such that 
\[ f( s_0, s_1, s_2, \dots )= \tilde P^{(\infty)} \bigl(    (S_0, S_1, \dots ) \in A   | s_0,s_1, \dots \bigr) \,.
\]
Since $A$ is shift invariant, $A$ belongs to the tail $\s$--algebra of $\bbN ^{\bbN_0}$. 
By Kolmogorov's 0--1 law and due to the independence of $S_0,S_1, \dots $ under
$\tilde P^{(\infty)} ( \cdot | s_0,s_1, \dots )$, we get that $f$ has values in $\{0,1\}$.

Below,  for the sake of intuition  we condition  to events of zero probability although all  can be formalized by means of regular conditional probabilities. 
Using that  $ \{ (S_0, S_1, \dots ) \in A \} =  \{ (S_1,S_2,  \dots ) \in A \} $ due to the shift invariance of $A$ and using the definition of $(S_k)_{k \geq 0}$, we get 
\begin{align*}
 f(a_0,a_1, \dots )&=   \tilde P^{(\infty)} (    (S_0, S_1, \dots ) \in A   | s_0=a_0,s_1=a_1, \dots ) \\
&= \tilde P^{(\infty)} (    (S_1, S_2, \dots ) \in A   | s_0=a_0,s_1=a_1,s_2=a_2 \dots ) \\
& = \tilde P^{(\infty)} (    (S_0, S_1, \dots ) \in A   | s_0=a_1,s_1=a_2, \dots ) = 
f(a_1,a_2, \dots ) 
\end{align*}
Hence  $f$ is shift invariant. By the ergodicity of $(s_k)_{k \geq 0}$, we conclude that the $0/1$--function $f(s_0,s_1, \dots )$ is constant $P^{(\infty)}$--a.s.  An integration  over $(s_0, s_1, \dots )$ allows to  get \eqref{salsiccia}. 
\end{proof}

\begin{Lemma}\label{indi} 
Consider two independent 
  random sequences $(Z_k)_{k \in \bbZ}$ and $(E_k)_{k \in \bbZ}$, the former  stationary and ergodic with respect to shifts,  the latter  given by i.i.d.~random variables. Then the random sequence $(Z_k,E_k)_{k \in \bbZ}$ is  stationary and ergodic with respect to shifts.
 \end{Lemma}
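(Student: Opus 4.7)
My plan is to separate stationarity (routine) from ergodicity (the real content), and for ergodicity to exploit the asymmetry between the two factors: the $Z$-sequence is only ergodic, whereas the $E$-sequence is i.i.d.\ and therefore strongly mixing. Stationarity follows at once from independence and stationarity of each marginal: on finite-dimensional cylinders $A\times B$,
\begin{equation*}
\bbP\bigl(\tau_n(Z,E)\in A\times B\bigr) = \bbP(\tau_n Z\in A)\,\bbP(\tau_n E\in B) = \bbP(Z\in A)\,\bbP(E\in B),
\end{equation*}
and one extends to all events by a Dynkin class argument.

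For ergodicity I will use the standard $L^2$ criterion: the joint shift is ergodic iff, for all bounded measurable $F,G$ on $\Omega_1\times\Omega_2$,
\begin{equation*}
\frac{1}{N}\sum_{k=0}^{N-1}\bbE\bigl[F\cdot (G\circ\tau_k)\bigr]\;\longrightarrow\; \bbE[F]\,\bbE[G].
\end{equation*}
By density of product functions in $L^2$, it is enough to check this for $F=f_1\otimes f_2$, $G=g_1\otimes g_2$ with $f_i,g_i$ bounded; independence then factorises the joint covariance as $a_k b_k$, where
\begin{equation*}
a_k := \bbE\bigl[f_1(Z)\,g_1(\tau_k Z)\bigr], \qquad b_k:= \bbE\bigl[f_2(E)\,g_2(\tau_k E)\bigr].
\end{equation*}
Ergodicity of the $Z$-shift will yield, via the mean ergodic theorem, the Cesaro limit $\tfrac{1}{N}\sum_{k=0}^{N-1} a_k \to A := \bbE[f_1(Z)]\,\bbE[g_1(Z)]$, while the i.i.d.\ structure of $(E_k)$ gives the stronger \emph{pointwise} convergence $b_k \to B := \bbE[f_2(E)]\,\bbE[g_2(E)]$ (for $f_2,g_2$ depending on finitely many coordinates this is immediate from independence of disjoint blocks, and the general case follows by an $L^2$ approximation).

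The one non-routine step will be combining these two different modes of convergence, for which I will use the splitting $a_k b_k = B a_k + a_k(b_k-B)$: the Cesaro mean of the first term tends to $AB$ by ergodicity of the $Z$-shift, whereas the second tends to $0$ even in ordinary limit, since $|a_k|\le \|f_1\|_\infty\|g_1\|_\infty$ and $b_k-B\to 0$. The main conceptual obstacle is that products of two merely ergodic systems may fail to be ergodic, so one genuinely needs the strictly stronger mixing of the i.i.d.\ factor---plain ergodicity of $(E_k)$ alone would not close the argument.
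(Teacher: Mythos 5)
Your argument is correct, but it takes a genuinely different route from the paper's. You prove ergodicity via the correlation characterization $\tfrac1N\sum_{k=0}^{N-1}\bbE[F\,(G\circ\tau_k)]\to\bbE[F]\,\bbE[G]$, reduce to product functions $f_1\otimes f_2$, $g_1\otimes g_2$ by independence and an $L^2$-density argument, and then combine Cesaro convergence of the $Z$-correlations (ergodicity of the $Z$-shift) with genuine convergence of the $E$-correlations (strong mixing of the i.i.d.\ shift) through the splitting $a_kb_k=Ba_k+a_k(b_k-B)$; in effect you prove the general fact that the product of an ergodic system with a (strongly) mixing system is ergodic. The paper argues instead directly on a shift-invariant set $A$: it approximates $A$ by a cylinder set $A_n$, shifts by a large $m$ so that the relevant coordinates are disjoint from a fixed finite block of $E$-coordinates, and uses independence of disjoint $E$-blocks (together with the independence of $(Z_k)$ and $(E_k)$) to conclude that $A$ is independent of the whole $E$-$\sigma$-algebra; hence $\mathds{1}_A=\bbP(A\mid\cF)$ with $\cF=\sigma(Z_k,\ k\in\bbZ)$, and this conditional probability is a shift-invariant function of the $Z$-sequence, so it is a.s.\ constant by ergodicity of $(Z_k)$. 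Both proofs rest on the same structural input (asymptotic independence of distant $E$-blocks plus ergodicity of $Z$), but yours is a functional-analytic correlation argument that isolates a reusable general statement, while the paper's is a hands-on measure-theoretic argument showing that every invariant event is, up to null sets, measurable with respect to the $Z$-coordinates. Your closing remark is also on point: plain ergodicity of $(E_k)$ would not suffice, since a product of two ergodic systems can fail to be ergodic.
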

\begin{proof} Call $P$ the law of $\left( (Z_k)_{k \in \bbZ}, (E_k)_{k \in \bbZ}\right)$, which is a probability measure on the space  $\bbR^\bbZ \times \bbR^\bbZ$, whose generic element will be denoted by $(\underline z , \underline e)$. We write $T$ for the shift 
$ [T(\underline z , \underline e)]_k= (z_{k+1}, e _{k+1} ) $.  
Let $A$ be a  shift--invariant Borel subset of $\bbR^\bbZ \times \bbR^\bbZ $.  We want to show that $P(A) \in \{0,1\}$.

\smallskip

We first claim that, given $r\geq 1$,   $A$   is independent of any set $B$ in the $\s$--algebra generated by $e_i$ with $|i| \leq r$.  To this aim, given $\e>0$, we fix a Borel set $A_n  \subset \bbR^\bbZ \times \bbR^\bbZ $  
 belonging to the  $\s$-algebra generated by $e_i,z_i$ with $|i| \leq n$, and  such that $P( A \D A_n)\leq \e $. We take $m$ large enough so that $[-r, r] \cap [ -n +m, n+m]= \emptyset$.  
We observe that 
\begin{align}
& P(A \cap B)= P(A_n \cap B) + O(\e) \,, \label{malasia1}\\
& P(A \cap B) = P( T^m A \cap B)= P( T^m A_n \cap B) + O(\e)=P( T^m A_n )P( B) + O(\e)  \,.\label{malasia2}
\end{align}
Indeed,  the first identity in \eqref{malasia2} follows from the shift invariance of $A$, while the second identity follows from the shift stationarity of $P$ implying that $P( T^m A_n \D T^m A) \leq \e$. To get the third identity in \eqref{malasia2}   we observe that $T^m A_n$ belongs to the $\s$--algebra generated by 
$e_i,z_i$ with $i \in [ -n +m, n+m]$. By our choice of $m$ and due to  the properties of $P$,  we get that $ T^m A_n$ and $B$ are independent, thus implying the third identity.

As a byproduct of \eqref{malasia1} and \eqref{malasia2} and the fact that $P( T^m A_n )=P(A)+O(\e)$, we get that
$ P(A \cap B)=P(A)P(B)+O(\e)$.  By the arbitrariness of $\e$ we conclude the proof of our claim.

Due to our claim, $\mathds{1} _A= P(A|\cF)$, $\cF$ being   the $\s$--algebra generated by $z_i$, $i \in \bbZ$.  We can think of $P(A|\cF)$ as function of $\underline z \in \bbR^{\bbZ}$. Due to the shift invariance of $A$, $P(A|\cF)  $ is shift invariant in  $\bbR^{\bbZ}$ except on an event of probability zero. Due to the ergodicity of the marginal of $P$ along $\underline z$, we conclude that $P(A|\cF)  $  is constant a.s. Since $\mathds{1} _A= P(A|\cF)$, $\mathds{1} _A$ is constant a.s., hence $P(A) \in \{0,1\}$.   
\end{proof}

\section{The nearest neighbor random walk $(X_n ^\rho)_{n \geq 1}$, $\rho=1$}\label{app_nn}
The biased Mott  random walk $(\bbY_t)_{t\geq 0}$   can be compared to the nearest neighbor random walk obtained by considering only nearest neighbor jumps on $\{x_j\}_{j \in \bbZ}$ with probability rate for a jump from $x$ to $y$  given by \eqref{eq_eq_eq}   when $x,y$ are nearest neighbors. By the same arguments as in Section \ref{ballistic_part}, it is simple to show that this  random walk is ballistic/subballistic if and only if the same holds for $(X_n ^\rho)_{n \in\N}$, $\rho=1$. The latter can be easily analyzed and the following holds:
\begin{Proposition}
The limit $v_{X^1}(\l) := \lim  _{n \to \infty} \frac{ X_n^1}{n}$ exists  $\bbP^{\o,1}_0$--a.s.~for $\bbP$--a.a.~$\o$, and  it does not dependent on $\o$. Moreover, the velocity $v_{X^1}(\l)$ is positive if and only if  condition \eqref{musica} is fulfilled,
otherwise it is zero.
\end{Proposition}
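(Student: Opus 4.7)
The walk $(X_n^1)$ is the nearest-neighbor random walk on $\Z$ with symmetric conductances $c_{k,k+1}(\omega)$, for which very explicit formulas are available. First I would verify that $X_n^1 \to +\infty$ $\bbP$-a.s.: by Lemma \ref{goldie} together with \eqref{renato_zero}, $\sum_{j \geq 0} 1/c_{j,j+1} < \infty$ and $\sum_{j \leq -1} 1/c_{j,j+1} = +\infty$ $\bbP$-a.s., which is the standard reversibility criterion for transience to $+\infty$ of a nearest-neighbor walk on $\Z$ (compare also with Lemma \ref{sicilia}).

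A first-step analysis iterating the recursion $E_k^{\omega,1}[T_{k+1}^1] = p_k^{-1}(1 + q_k E_{k-1}^{\omega,1}[T_k^1])$, with $p_k = c_{k,k+1}/\pi^1(k)$ and $q_k = c_{k-1,k}/\pi^1(k)$, yields the telescoping identity
\begin{equation*}
E_0^{\omega,1}[T_1^1] = 1 + 2\sum_{j=1}^{\infty}\frac{c_{-j,-j+1}(\omega)}{c_{0,1}(\omega)}.
\end{equation*}
From \eqref{defofcond} and the boundedness of $u$, each summand is comparable up to factors in $[e^{u_{\min}-u_{\max}}, e^{u_{\max}-u_{\min}}]$ to $\exp\{(1-\l)Z_0 - (1+\l)Z_{-j} - 2\l(Z_{-j+1}+\cdots+Z_{-1})\}$, so $\bbE[E_0^{\omega,1}[T_1^1]] < +\infty$ if and only if the expectation of the series in \eqref{musica} is finite.

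I would then run an ergodic law of large numbers for the hitting times. Decomposing $T_n^1 = \sum_{k=0}^{n-1}(T_{k+1}^1 - T_k^1)$, by the strong Markov property the increments $\xi_k := T_{k+1}^1 - T_k^1$ are conditionally independent given $\omega$ with $E^{\omega,1}[\xi_k] = E_0^{\tau_k\omega,1}[T_1^1]$; writing $\xi_k = F(\tau_k\omega, U_k)$ for an i.i.d.~uniform sequence $(U_k)$ independent of the environment, Birkhoff's theorem applied to the stationary ergodic system $(\tau_k\omega, U_k)_{k \in \N}$ gives
\begin{equation*}
\lim_{n \to \infty}\frac{T_n^1}{n} = \mu := \bbE\bigl[E_0^{\omega,1}[T_1^1]\bigr] \in (0, +\infty]
\end{equation*}
$P_0^{\omega,1}$-a.s.~for $\bbP$-a.e.~$\omega$ (the case $\mu = +\infty$ being handled by truncating $\xi_k$ at height $K$, applying Birkhoff to the bounded sequence, and letting $K\to\infty$). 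Since $X^1$ is nearest neighbor there is no overshoot, $X^1_{T_n^1} = n$, and the running maximum $M_t^1 := \max_{s \leq t}X_s^1$ satisfies $T^1_{M_t^1} \leq t < T^1_{M_t^1+1}$; inverting gives $M_t^1/t \to 1/\mu$ $\bbP$-a.s.

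It remains to pass from $M_t^1/t$ to $X_t^1/t$. In the sub-ballistic regime $\mu = +\infty$, the bound $X_t^1 \leq M_t^1$ combined with transience immediately yields $X_t^1/t \to 0$. In the ballistic regime $\mu < +\infty$, the main obstacle is to control the deviation $M_t^1 - X_t^1$: Proposition \ref{prop:function_g} ensures that the walker visits each site a $\bbP$-a.s.~finite and summable number of times, and a standard excursion/Borel--Cantelli estimate based on it shows that the amount of time spent strictly below the running maximum is of sublinear order, whence $(M_t^1 - X_t^1)/t \to 0$ $\bbP$-a.s. This yields the existence of the deterministic limit $v_{X^1}(\l) = 1/\mu$, which is strictly positive if and only if condition \eqref{musica} holds.
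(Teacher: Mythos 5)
Most of your argument is sound and is, in fact, a from-scratch reproof of what the paper simply quotes: the paper's proof consists of identifying $\bbE[\bar S]<\infty$ (in Zeitouni's notation) with condition \eqref{musica} and checking $\bbE[\bar F]=\infty$, and then invoking Theorem 2.1.9 of \cite{OferStFlour}. Your steps 1--5 (transience via conductances, the telescoped formula for $E_0^{\o,1}[T_1^1]$, its comparison with \eqref{musica}, the ergodic LLN for $T_n^1$ including the truncation when $\mu=\infty$, and the inversion giving $M_t^1/t\to 1/\mu$) are essentially correct; a minor imprecision is that you normalize by $\pi^1$ rather than $\pi^\infty$, i.e.\ you compute the hitting time of the non-lazy conductance walk, whereas $X^1$ as defined in \eqref{salto} has a holding probability at each site --- but by Lemma \ref{lorenzo} this changes $E_0^{\o,1}[T_1^1]$ only by bounded multiplicative constants, so the criterion is unaffected.

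The genuine gap is in your last step, the ballistic-case passage from $M_t^1/t\to1/\mu$ to $X_t^1/t\to1/\mu$. Your intermediate claim that ``the amount of time spent strictly below the running maximum is of sublinear order'' is false in general: already for the biased simple random walk the fraction of time at which $X_t=M_t$ converges to a constant strictly less than $1$, so the time spent strictly below the running maximum is of order $t$, not $o(t)$. Moreover the justification you offer would not repair it: Proposition \ref{prop:function_g} bounds \emph{expected} numbers of visits to the left of the starting point by an $\o$-dependent quantity, and turning this into a quenched Borel--Cantelli control of backtracking, uniformly along the levels $k$, requires integrability (of the type $\bbE\bigl[e^{(1-\l)Z_0}\bigr]<\infty$, cf.\ \eqref{spring3}) that is not available under the sole hypothesis \eqref{musica}. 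The conclusion you need, $(M_t^1-X_t^1)/t\to0$, is nevertheless true and has an elementary proof that exploits the nearest-neighbor (plus lazy) structure: since each step changes the position by at most one, for $T^1_{M_t^1}\le t< T^1_{M_t^1+1}$ one has
\begin{equation*}
M_t^1-X_t^1\;\le\; t-T^1_{M_t^1}\;<\;T^1_{M_t^1+1}-T^1_{M_t^1}\,,
\end{equation*}
and $T^1_{k+1}-T^1_k=o(k)$ as soon as $T^1_k/k\to\mu<\infty$; hence $M_t^1-X_t^1=o(M_t^1)=o(t)$ and $X_t^1/t\to1/\mu$. With this replacement your argument becomes a complete alternative to the paper's citation of Zeitouni's theorem.
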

\begin{proof} We apply Theorem 2.1.9  in \cite{OferStFlour}  using the notations therein. Since $\rho_i = c_{i,i-1}/c_{i,i+1} $ we get that 
$\bar S=\frac{1}{c_{0,1} }\sum _{i=0} ^\infty ( c_{-i,-i-1}+ c_{-i,-i+1} )$. Therefore,   $\bbE( \bar S)< \infty$ if and only if $\sum_{i=0}^\infty \bbE \bigl( c_{-i,-i-1}/c_{0,1}\bigr)<\infty$. The last condition is equivalent to \eqref{musica} since the energy marks are bounded. 
On the other hand $ \bar F= \frac{1}{c_{-1,0} } \sum _{i=1}^\infty ( c_{i,i-1}+ c_{i,i+1} ) $. Hence, $\bbE( \bar F) =\infty$ if and only if $ \sum _{i=0}^\infty \bbE( c_{i,i+1}/ c_{-1,0} ) =\infty$. Since, when $u \equiv 0$,  $c_{i,i+1}/c_{-1,0}= \exp \{ (1+\l) Z_{-1} + 2\l (Z_0+ \dots+ Z_{i-1} ) -(1-\l) Z_i \} $, by Assumption (A4) it follows that $\bbE(\bar F)=+\infty$ always.
The claim then follows since, by  Theorem 2.1.9  in \cite{OferStFlour}, $v_{X^1}(\l) >0$ if $\bbE (\bar S) <\infty$, while $v_{X^1}(\l) =0$ if $\bbE (\bar S) =\infty$ and $\bbE( \bar F)=\infty$.
\end{proof}

\bigskip

\bigskip

\noindent
{\bf Acknowledgements}.  The authors are very much indepted to Noam Berger and thank him for many useful  and inspiring discussions.  A.F.~kindly acknowledges the  Short Visit Grant of the European Science Foundation (ESF)  in the framework of the ESF program ``Random Geometry of Large Interacting Systems and Statistical Physics'', and the Department of Mathematics  of the Technical University
  Munich for the kind hospitality. M.S.~thanks the Department of Mathematics of University La Sapienza in Rome for the kind hospitality,  the Department of Mathematics  of the Technical University
  Munich where he has started to  work  on this project as a  Post Doc, and the  funding received from the European Union's Horizon 2020 research and innovation programme under the Marie Sklodowska-Curie Grant agreement No 656047.

 \end{document}